\newtheorem{theorem}{Theorem}[section]
\newtheorem{lemma}{Lemma}[section]
\newtheorem{remark}{Remark}[section]
\newtheorem{proposition}{Proposition}[section]
\numberwithin{equation}{section}
\newcommand{\be}{\begin{equation}}
\newcommand{\ee}{\end{equation}}
\newcommand\bes{\begin{eqnarray}} \newcommand\ees{\end{eqnarray}}
\newcommand{\bess}{\begin{eqnarray*}}
\newcommand{\eess}{\end{eqnarray*}}
\newcommand{\bbbb}{\left\{\begin{aligned}}
\newcommand{\nnnn}{\end{aligned}\right.}
\newcommand{\bea}{\begin{align*}}
\newcommand{\eea}{\end{align*}}
\newcommand\ep{\varepsilon}
\newcommand\kk{\left}
\newcommand\rr{\right}
\newcommand\dd{\displaystyle}
\newcommand\dx{{\rm d}x}
\newcommand\dy{{\rm d}y}
\newcommand\yy{\infty}
\begin{document}\thispagestyle{empty}
\setlength{\baselineskip}{16pt}
\begin{center}
 {\LARGE\bf The free boundary problem of an epidemic model with nonlocal diffusions and nonlocal reactions: spreading-vanishing dichotomy\footnote{This work was supported by NSFC Grants
12171120, 11901541,12301247}}\\[4mm]
{\Large Xueping Li}\\[0.5mm]
{School of Mathematics and Information Science, Zhengzhou University of Light Industry, Zhengzhou, 450002, China}\\[2.5mm]
  {\Large Lei Li}\\[0.5mm]
{College of Science, Henan University of Technology, Zhengzhou, 450001, China}\\[2.5mm]
{\Large Mingxin Wang\footnote{Corresponding author. {\sl E-mail}: mxwang@hpu.edu.cn}}\\[0.5mm]
 {School of Mathematics and Information Science, Henan Polytechnic University, Jiaozuo, 454000, China}
\end{center}

\date{\today}

\begin{quote}
\noindent{\bf Abstract.} This paper concerns the free boundary problem of an epidemic model. The spatial movements of the infectious agents and the infective humans are approximated by nonlocal diffusion operators. Especially, both the growth rate of the agents and the infective rate of humans are represented by nonlocal reaction terms. Thus our model has four integral terms which bring some difficulties for the study of the corresponding principal eigenvalue problem. Firstly, using some elementray analysis instead of Krein-Rutman theorem and the variational characteristic, we obtain the existence and asymptotic behaviors of principal eigenvalue.  Then a spreading-vanishing dichotomy is proved to hold, and the criteria for spreading and vanishing are derived. Lastly, comparing our results with those in the existing works, we discuss the effect of nonlocal reaction term on spreading and vanishing, finding that the more nonlocal reaction terms a model has, the harder spreading happens.

\textbf{Keywords}: Nonlocal diffusion; nonlocal reaction term; epidemic model; free boundary; principal eigenvalue; spreading and vanishing;

\textbf{AMS Subject Classification (2000)}: 35K57, 35R09,
35R20, 35R35, 92D25
\end{quote}

\section{Introduction}
\renewcommand{\thethm}{\Alph{thm}}
{\setlength\arraycolsep{2pt}
To understand the spreading mechanism of an oral-transmitted epidemic such as cholera, Capasso and Paveri-fontana \cite{CP} proposed the following ODE system
\bes \label{1.1}u_t=-au+cv, \;\;\; v_t=-bv+G(u),\; \;t>0,\ees
where $u$ stands for the average density of the infective agents, such as bacteria, virus and so on; $v$ represents the average density of the infective human population; $au$ denotes the natural death rate of the agents; $cv$ represents the growth rate of the agents contributed by the infective humans; $bv$ stands for the fatality rate of the infective human population. All these constants are positive. The function $G(u)$ represents the infective rate of humans, and satisfies
\begin{enumerate}
\item[{\bf(G)}] $G\in C^1([0,\yy))$, $G(0)=0$, $G'(z)>0$ for $z\ge 0$, $\frac{G(z)}{z}$ is strictly decreasing for $z>0$ and $\lim_{z\to\yy}\frac{G(z)}{z}<\frac{ab}{c}$.
 \end{enumerate}
 An example of $G$ is $\frac{\beta z}{1+\alpha z}$ with $\alpha,\beta>0$. The authors showed that the basic reproduction number
\[\mathcal{R}_0:=\frac{cG'(0)}{ab}\]
plays a crucial role in the dynamics of \eqref{1.1}. Namely, if $\mathcal{R}_0<1$, the disease-free equilibrium $(0,0)$ is asymptotically stable ( also exponentially stable by a simple comparison argument); while if $\mathcal{R}_0>1$, there exists a unique positive equilibrium $(u^*,v^*)$ which is asymptotically stable and determined by
\bes\label{1.2}\frac{G(u^*)}{u^*}=\frac{ab}{c},\;\; v^*=\frac{au^*}{c}.\ees

Over the past years, rapid progress on the research of model \eqref{1.1} has been made along many different directions. For example, please see \cite{CM} for the corresponding reaction-diffusion system on a bounded spatial domain, \cite{ZW,WSL} for traveling wave solutions and  \cite{ABL,WC,LZW,ZLN} for free boundary problems with random diffusion. Particularly, in \cite{CDLL}, the nonlocal diffusion operator
\[d\int_{\mathbb{R}}J(x-y)u(t,y)\dy-du\]
was first incorporated into free boundary problem arsing from ecology. Inspired by this work,  many studies have introduced such nonlocal diffusion operator and free boundary condition to \eqref{1.1}. Zhao et al \cite{ZZLD} proposed the following model
\bes\left\{\begin{aligned}\label{1.3}
&u_t=d\int_{g(t)}^{h(t)}\!\!J(x-y)u(t,y)\dy-du-au+cv, & &t>0,~x\in(g(t),h(t)),\\[1mm]
&v_t=-bv+G(u),& &t>0, ~ x\in(g(t),h(t)),\\[1mm]
&u(t,x)=v(t,x)=0,& &t>0, ~ x\notin(g(t),h(t))\\
&h'(t)=\mu\int_{g(t)}^{h(t)}\!\!\int_{h(t)}^{\infty}
J(x-y)u(t,x)\dy\dx,& &t>0,\\[1mm]
&g'(t)=-\mu\int_{g(t)}^{h(t)}\!\!\int_{-\infty}^{g(t)}
J(x-y)u(t,x)\dy\dx,& &t>0,\\[1mm]
&h(0)=-g(0)=h_0>0,\;\; u(0,x)=u_0(x); ~ ~ v(0,x)=v_0(x),& &|x|\le h_0,
 \end{aligned}\right.
 \ees
 where kernel function $J$ satisfies
 \begin{enumerate}
\item[{\bf(J)}]$J\in C(\mathbb{R})\cap L^{\yy}(\mathbb{R})$, $J(x)\ge0$, $J(0)>0$, $J$ is even, $\dd\int_{\mathbb{R}}J(x)\dx=1$.
 \end{enumerate}
 The initial data $(u_0,v_0)$ meet with
 \begin{enumerate}
\item[{\bf(H)}]$ \varrho\in C([-h_0,h_0])$, $\varrho(x)>0$ in $(-h_0,h_0)$, $\varrho(\pm h_0)=0$.
 \end{enumerate}
 They proved that the dynamics of \eqref{1.3} is govern by a spreading-vanishing dichotomy.  The criteria for spreading and vanishing were also obtained by using a related principal eigenvalue problem and some comparison principles. The spreading speed was given by Du and Ni \cite{DN1} where a systematic method for monostable cooperative systems was put forward by following the lines in \cite{DLZJMPA}.

Subsequently, since the infectious agents not only depend on the infective humans at location $x$, but on some neighbourhood of $x$, Zhao et al \cite{ZLD} replaced the term $cv$ in \eqref{1.3} by the nonlocal reaction term $c\int_{g(t)}^{h(t)}K(x-y)v(t,y)\dy$. More precisely, they studied the problem
 \bes\left\{\begin{aligned}\label{1.4}
&u_t=d\int_{g(t)}^{h(t)}\!\!J(x-y)u(t,y)\dy-du-au+c\int_{g(t)}^{h(t)}K(x-y)v(t,y)\dy, & &t>0,~x\in(g(t),h(t)),\\[1mm]
&v_t=-bv+G(u),& &t>0, ~ x\in(g(t),h(t)),\\[1mm]
&u(t,x)=v(t,x)=0,& &t>0, ~ x\notin(g(t),h(t))\\
&h'(t)=\mu\int_{g(t)}^{h(t)}\!\!\int_{h(t)}^{\infty}
J(x-y)u(t,x)\dy\dx,& &t>0,\\[1mm]
&g'(t)=-\mu\int_{g(t)}^{h(t)}\!\!\int_{-\infty}^{g(t)}
J(x-y)u(t,x)\dy\dx,& &t>0,\\[1mm]
&h(0)=-g(0)=h_0>0,\;\; u(0,x)=u_0(x); ~ ~ v(0,x)=v_0(x),& &|x|\le h_0,
 \end{aligned}\right.
 \ees
 where kernel functions $J$ and $K$ satisfy {\bf (J)}.
 The authors found that similar to \eqref{1.3}, the dynamics of \eqref{1.4} also conforms to a spreading-vanishing dichotomy, but the criteria for spreading and vanishing are different from that of \eqref{1.3}. The spreading speed was investigated in another work \cite{DLNZ} where one can see that the spreading speed of the corresponding Cauchy problem with compact initial data is finite if and only if there is some $\lambda>0$ such that both $J$ and $K$ meet with $\int_{\mathbb{R}}P(x)e^{-\lambda x}\dx<\yy$ with $P=J$ or $K$, while the spreading speed of \eqref{1.4} is finite if and only if $J$ satifies $\int_{0}^{\yy}xJ(x)\dx<\yy$ without extra assumption on $K$.

 As is seen in models \eqref{1.3} and \eqref{1.4}, the dispersal of the infective humans $v$ is ignored by assuming that the mobility of the infective humans is relatively small compared to the agents $u$. However, Chang and Du \cite{CD} supposed that the diffusion of $v$ is also approximated by nonlocal diffusion operator, and thus proposed the following model
  \bes\left\{\begin{aligned}\label{1.5}
&u_t=d_1\int_{g(t)}^{h(t)}\!\!J_1(x-y)u(t,y)\dy-d_1u-au+cv, & &t>0,~x\in(g(t),h(t)),\\[1mm]
&v_t=d_2\int_{g(t)}^{h(t)}\!\!J_2(x-y)v(t,y)\dy-d_2v-bv+G(u),& &t>0, ~ x\in(g(t),h(t)),\\[1mm]
&u(t,x)=v(t,x)=0,& &t>0, ~ x\notin(g(t),h(t))\\
&h'(t)=\int_{g(t)}^{h(t)}\!\!\int_{h(t)}^{\infty}\bigg[\mu_1
J_1(x-y)u(t,x)+\mu_2J_2(x-y)v(t,x)\bigg]\dy\dx,& &t>0,\\[1mm]
&g'(t)=-\int_{g(t)}^{h(t)}\!\!\int_{-\yy}^{g(t)}\bigg[\mu_1
J_1(x-y)u(t,x)+\mu_2J_2(x-y)v(t,x)\bigg]\dy\dx,& &t>0,\\[1mm]
&h(0)=-g(0)=h_0>0,\;\; u(0,x)=u_0(x); ~ ~ v(0,x)=v_0(x),& &|x|\le h_0,
 \end{aligned}\right.
 \ees
 where $J_1$ and $J_2$ satisfy {\bf (J)}. It was proved in \cite{CD} that the dynamics of \eqref{1.5} is similar to those of the above models \eqref{1.3} and \eqref{1.4}, namely, a spreading-vanishing dichotomy holds. The spreading speed was covered in \cite{DN1}. Afterwards, as in \eqref{1.4}, Du and Wang \cite{WD1} changed the term $cv$ to the nonlocal reaction term $c\int_{g(t)}^{h(t)}K(x-y)v(t,y)\dy$ and stduied the problem
   \bes\left\{\begin{aligned}\label{1.6}
&u_t=d_1\int_{g(t)}^{h(t)}\!\!J_1(x-y)u(t,y)\dy-d_1u-au+c\int_{g(t)}^{h(t)}K(x-y)v(t,y)\dy, & &t>0,~x\in(g(t),h(t)),\\[1mm]
&v_t=d_2\int_{g(t)}^{h(t)}\!\!J_2(x-y)v(t,y)\dy-d_2v-bv+G(u),& &t>0, ~ x\in(g(t),h(t)),\\[1mm]
&u(t,x)=v(t,x)=0,& &t>0, ~ x\notin(g(t),h(t))\\
&h'(t)=\int_{g(t)}^{h(t)}\!\!\int_{h(t)}^{\infty}\bigg[\mu
J_1(x-y)u(t,x)+\mu\rho J_2(x-y)v(t,x)\bigg]\dy\dx,& &t>0,\\[1mm]
&g'(t)=-\int_{g(t)}^{h(t)}\!\!\int_{-\yy}^{g(t)}\bigg[\mu
J_1(x-y)u(t,x)+\mu \rho J_2(x-y)v(t,x)\bigg]\dy\dx,& &t>0,\\[1mm]
&h(0)=-g(0)=h_0>0,\;\; u(0,x)=u_0(x); ~ ~ v(0,x)=v_0(x),& &|x|\le h_0,
 \end{aligned}\right.
 \ees
 in which $J_1$, $J_2$ and $K$ satisfy {\bf (J)}. They proved that the spreading-vanishing dichotomy also holds for \eqref{1.6}, and the spreading speed was shown in their another work \cite{WD2}.

In view of the above models, it is natural to think that the nonlocal diffusion of the infectious agents may let the infective rate of humans not only to depend on the agents at location $x$, but on some neighborhood of $x$. Hence we change the infective rate $G(u)$ to the nonlocal reaction term
 \[G(\int_{g(t)}^{h(t)}J_{21}(x-y)u(t,y)\dy),\]
and consider the following problem
  \bes\left\{\begin{aligned}\label{1.7}
&u_t=d_1\int_{g(t)}^{h(t)}\!\!J_{11}(x-y)u(t,y)\dy-d_1u-au+c\int_{g(t)}^{h(t)}J_{12}(x-y)v(t,y)\dy, & &t>0,x\in(g(t),h(t)),\\[1mm]
&v_t=d_2\int_{g(t)}^{h(t)}\!\!J_{22}(x-y)v(t,y)\dy-d_2v-bv+G(\int_{g(t)}^{h(t)}J_{21}(x-y)u(t,y)\dy),& &t>0, x\in(g(t),h(t)),\\[1mm]
&u(t,x)=v(t,x)=0,& &t>0, ~ x\notin(g(t),h(t))\\
&h'(t)=\int_{g(t)}^{h(t)}\!\!\int_{h(t)}^{\infty}\bigg[\mu_1
J_1(x-y)u(t,x)+\mu_2 J_2(x-y)v(t,x)\bigg]\dy\dx,& &t>0,\\[1mm]
&g'(t)=-\int_{g(t)}^{h(t)}\!\!\int_{-\yy}^{g(t)}\bigg[\mu_1
J_1(x-y)u(t,x)+\mu_2J_2(x-y)v(t,x)\bigg]\dy\dx,& &t>0,\\[1mm]
&h(0)=-g(0)=h_0>0,\;\; u(0,x)=u_0(x); ~ ~ v(0,x)=v_0(x),& &|x|\le h_0,
 \end{aligned}\right.
 \ees
 where condtions {\bf (J)} and {\bf(H)} hold for kernel functions $J_{ij}$ with $i,j=1,2$  and $(u_0,v_0)$, respectively.
  With the aid of a rahter complete undertanding for the asymptotic behaviors of a related principal eigenvalue, we aim at knowing as much as possible about the dynamics of \eqref{1.7} in the present work. The spreading speed and rate of accelerated spreading will be discussed in another work. Below are our main results in this paper.

  \begin{theorem}[Global existence and uniqueness]\label{t1.1}Problem \eqref{1.7} has a unique global solution $(u,v,g,h)$. Moreover, $(u,v)\in [C([0,\yy)\times[g(t),h(t)])]^2$, $(g,h)\in [C^1([0,\yy))]^2$, $0< u(t,x)\le K_1$ and $0< v(t,x)\le K_2$ in $[0,\yy)\times(g(t),h(t))$ with some $K_1,K_2>0$ depending only on the initial data $(u_0,v_0)$ and parameters of \eqref{1.7}.
\end{theorem}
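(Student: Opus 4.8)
The plan is to follow the by-now-standard scheme for nonlocal-diffusion free boundary problems (compare \cite{CD,WD1,ZLD}): first prove local-in-time existence and uniqueness by a contraction mapping, then establish \emph{time-independent} a priori bounds for $u,v$ together with locally-in-time bounds for $g,h,g',h'$, and finally patch these together to obtain the global solution, positivity being obtained at the end.

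\emph{Local existence and uniqueness.} Fix a small $T>0$ and let $\mathbb{X}_T$ be the complete metric space of quadruples $(u,v,g,h)$ with $g,h\in C([0,T])$, $g(0)=-h(0)=-h_0$, $-g$ and $h$ nondecreasing with a prescribed Lipschitz bound, and $u,v$ continuous on $\overline\Omega_T:=\{(t,x):0\le t\le T,\ g(t)\le x\le h(t)\}$, vanishing on the lateral boundary, with $\|u\|_\infty\le 2\|u_0\|_\infty+1$, $\|v\|_\infty\le 2\|v_0\|_\infty+1$ (extended by $0$ for $x\notin[g(t),h(t)]$). Define $\Gamma(u,v,g,h)=(\tilde u,\tilde v,\tilde g,\tilde h)$ by: $\tilde g,\tilde h$ come from integrating the free-boundary ODEs in \eqref{1.7} with $(u,v,g,h)$ inserted on the right-hand sides; and $\tilde u,\tilde v$ come, for each fixed $x$, from solving along the line $\{x=\text{const}\}$ the linear ODEs in $t$ obtained from the first two equations of \eqref{1.7} with the four nonlocal terms $\int J_{11}(x-y)u$, $\int J_{12}(x-y)v$, $\int J_{22}(x-y)v$, $G(\int J_{21}(x-y)u)$ frozen at the old iterate, with initial value $u_0(x)$ (resp. $v_0(x)$) if $|x|\le h_0$ and value $0$ at the time $x$ first enters the domain otherwise; i.e. by Duhamel's formula,
\[
\tilde u(t,x)=u_0(x)e^{-(d_1+a)t}+\int_0^t e^{-(d_1+a)(t-s)}\Big(d_1\!\int_{g(s)}^{h(s)}\! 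J_{11}(x-y)u(s,y)\,dy+c\!\int_{g(s)}^{h(s)}\! J_{12}(x-y)v(s,y)\,dy\Big)ds ,
\]
and similarly for $\tilde v$ with $d_2,b$ and the reaction term $G\big(\int_{g(s)}^{h(s)}J_{21}(x-y)u(s,y)\,dy\big)$ in place of the bracket. Using $\|J_{ij}\|_{L^1(\R)}=1$, the uniform bounds, the $C^1$-smoothness and local Lipschitz continuity of $G$ on a compact interval, and handling the slightly different domains of the iterates via the zero extension, one checks that $\Gamma$ maps $\mathbb{X}_T$ into itself and is a contraction once $T$ is small enough; Banach's fixed point theorem gives a unique local solution, and the continuity of $u,v$ in $(t,x)$ as well as $g,h\in C^1$ follow from these explicit representations and the continuity of the integrands.

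\emph{A priori bounds and globalization.} I next look for constants $K_1\ge\|u_0\|_\infty$, $K_2\ge\|v_0\|_\infty$ making the constant pair $(K_1,K_2)$ a spatially homogeneous supersolution of \eqref{1.7} on any moving domain. Since $\int_{g(t)}^{h(t)}J_{ij}(x-y)\,dy\le\int_\R J_{ij}=1$ and $G$ is increasing, it suffices that $-aK_1+cK_2\le0$ and $-bK_2+G(K_1)\le0$, i.e. $G(K_1)/b\le K_2\le aK_1/c$. By {\bf(G)} there is $K_1^*\ge 0$ with $G(z)/z\le ab/c$ for all $z\ge K_1^*$; taking $K_1=\max\{K_1^*,\|u_0\|_\infty,c\|v_0\|_\infty/a\}$ and $K_2=\max\{\|v_0\|_\infty,G(K_1)/b\}$ fulfils both inequalities, and $K_1,K_2$ depend only on the data. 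Since \eqref{1.7} is cooperative (the $u$-equation depends on $v$ through the nondecreasing term $c\int J_{12}v$, the $v$-equation on $u$ through the nondecreasing $G(\int J_{21}u)$), a comparison argument on the domain $(g(t),h(t))$ yields $0\le u\le K_1$, $0\le v\le K_2$ on the whole existence interval. From the free-boundary equations, using $\int_{h(t)}^{\infty}J_1(x-y)\,dy\le1$ and $\int_{h(t)}^{\infty}J_2(x-y)\,dy\le1$, one gets $0\le h'(t)\le(\mu_1K_1+\mu_2K_2)(h(t)-g(t))$ and $0\le-g'(t)\le(\mu_1K_1+\mu_2K_2)(h(t)-g(t))$, whence $h(t)-g(t)\le 2h_0e^{2(\mu_1K_1+\mu_2K_2)t}$, so that $g,h$ and $g',h'$ stay bounded on every finite interval. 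Hence, were the maximal existence time $T_{\max}$ finite, these bounds would let us take $(u,v)(T_{\max}^-,\cdot)$, $g(T_{\max}^-)$, $h(T_{\max}^-)$ as admissible new data and re-run the local step to continue the solution beyond $T_{\max}$, a contradiction; thus $T_{\max}=\yy$. Finally, since $u_0,v_0>0$ in $(-h_0,h_0)$ and, for nonnegative $u,v$, $u_t+(d_1+a)u\ge d_1\int_{g(t)}^{h(t)}J_{11}(x-y)u\,dy+c\int_{g(t)}^{h(t)}J_{12}(x-y)v\,dy\ge0$ and likewise $v_t+(d_2+b)v\ge0$, a Gronwall-type / maximum-principle argument gives $u,v>0$ in $[0,\yy)\times(g(t),h(t))$.

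\emph{Main obstacle.} The delicate part is the local step: one must arrange the contraction so that the four nonlocal terms — two nonlocal diffusions and two nonlocal reaction terms, one of the latter composed with the nonlinearity $G$ — together with the moving domain are all estimated by Lipschitz constants uniform in (and tending to $0$ with) the small parameter $T$. The composition of $G$ with $\int J_{21}u$ and the need to compare iterates living on slightly different intervals $[g(t),h(t)]$ (handled by extending $u,v$ by zero) are the points requiring the most care; once that is in place, the remaining steps are routine.
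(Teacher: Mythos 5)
Your proposal follows the standard contraction-mapping scheme (local existence via Duhamel/Banach fixed point, time-independent $L^\infty$ bounds by constructing a constant supersolution from {\bf(G)}, exponential-in-time control of $h(t)-g(t)$ from the boundary ODEs, then continuation), which is exactly what the paper invokes by referring the reader to the proof of Theorem 1.1 in \cite{WD1}; the choice of $K_1,K_2$ using the decreasing ratio $G(z)/z$ and the limit condition in {\bf(G)}, and the handling of iterates on slightly different domains by zero extension, are the right ingredients. So the proposal is correct and takes essentially the same route as the paper.
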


It follows from the above theorem and the equations of $g$ and $h$ that $-g$ and $h$ are strictly increasing in $t>0$. Thus $\dd\lim_{t\to\yy}h(t)=:h_{\yy}$ and $\lim_{t\to\yy}g(t)=:g_{\yy}$ are both well defined, as well as $h_{\yy}\in(h_0,\yy]$, $g_{\yy}\in[-\yy,-h_0)$.

\begin{theorem}[Spreading-vanishing dichotomy]\label{t1.2} Let $(u,v,g,h)$ be the unique solution of \eqref{1.7}. Then one of the following alternatives must happen.
\begin{enumerate}[$(1)$]
\item \underline{Spreading:} necessarily $\mathcal{R}_0>1$, $\dd\lim_{t\to\yy}g(t)=-\yy$, $\dd\lim_{t\to\yy}h(t)=\yy$, $\dd\lim_{t\to\yy}u(t,x)=u^*$ and $\dd\lim_{t\to\yy}v(t,x)=v^*$ in $C_{loc}(\mathbb{R})$, where $(u^*,v^*)$ is unique given by \eqref{1.2}.

\item \underline{Vanishing:} $h_{\yy}-g_{\yy}<\yy$ and $\dd\lim_{t\to\yy}\|u(t,\cdot)+v(t,\cdot)\|_{C([g(t),h(t)])}=0$. Moreover, if $\lambda_p(g_{\yy},h_{\yy})<0$ (easily guaranteed by letting $\mathcal{R}_0\le1$ or $\mu_1+\mu_2$ small enough), then $\dd\lim_{t\to\yy}e^{kt}\|u(t,\cdot)+v(t,\cdot)\|_{C([g(t),h(t)])}=0$ for all $k\in(0,-\lambda_p(g_{\yy},h_{\yy}))$, where $\lambda_p(g_{\yy},h_{\yy})$ is determined by principal eigenvalue problem \eqref{2.9}.
    \end{enumerate}
\end{theorem}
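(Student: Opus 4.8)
The plan is to argue along the lines by now standard for nonlocal free boundary problems of monostable cooperative type, splitting according to whether $\ell_\infty:=h_\infty-g_\infty$ is finite or infinite. Two tools are used throughout. The first is the comparison principle for the cooperative system \eqref{1.7}: raising $v$ increases the $u$-equation through $c\int J_{12}v$, and raising $u$ increases the $v$-equation through $G(\int J_{21}u)$, so sub/supersolutions behave as usual, including for fixed-domain truncations and for the free boundary. The second is the principal eigenvalue $\lambda_p(\Omega)$ of problem \eqref{2.9} on an interval $\Omega$, together with its monotonicity and continuity in $\Omega$, the limit of $\lambda_p((-\ell/2,\ell/2))$ as $\ell\to\infty$ (whose sign is that of $\mathcal R_0-1$), and the $\operatorname{sign}\lambda_p(\Omega)$-dichotomy of the fixed-domain problem (a unique globally attractive positive steady state if $\lambda_p(\Omega)>0$, global attractivity of $(0,0)$ if $\lambda_p(\Omega)\le0$, the latter via $G(z)<G'(0)z$ for $z>0$ from \textbf{(G)}). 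It is also useful that the spatially homogeneous pair $(\bar U(t),\bar V(t))$ solving \eqref{1.1} with $\bar U(0),\bar V(0)$ above $\|u_0\|_\infty,\|v_0\|_\infty$ is a supersolution of \eqref{1.7}, since on the moving domain $\int_{g(t)}^{h(t)}J_{ij}(x-y)\dy\le1$ and $G$ is nondecreasing; hence $0\le u\le\bar U(t)$ and $0\le v\le\bar V(t)$.

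\emph{Case $\ell_\infty<\infty$.} First I would prove $\lambda_p(g_\infty,h_\infty)\le0$. If not, continuity of $\lambda_p$ gives $T$ with $\lambda_p(g(T),h(T))>0$ and $h(T)$ close to $h_\infty$; restricting \eqref{1.7} to the fixed interval $(g(T),h(T))\subset(g(t),h(t))$, $(u,v)$ dominates the solution $(\underline u,\underline v)$ of the fixed-domain problem there with the (interior-positive) data $(u(T,\cdot),v(T,\cdot))$, and since $\lambda_p(g(T),h(T))>0$ this $(\underline u,\underline v)$ converges to the unique positive steady state. Thus $u(t,\cdot)$ is bounded below on a left neighbourhood of $h(T)$ for all large $t$; combined with $h(t)\uparrow h_\infty$ and $h_\infty-h(T)$ small, the formula for $h'(t)$ then forces $h'(t)\ge\delta>0$ for all large $t$, so $h_\infty=\infty$, a contradiction; the bound for $g$ is symmetric. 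Next, with $\lambda_p(g_\infty,h_\infty)\le0$, compare $(u,v)$ with the solution $(\hat u,\hat v)$ of the fixed-domain problem on $[g_\infty,h_\infty]$ with data $(u_0,v_0)$ extended by $0$; since $[g(t),h(t)]\subset[g_\infty,h_\infty]$ throughout, $(u,v)\le(\hat u,\hat v)$, and $(\hat u,\hat v)\to(0,0)$ because $\lambda_p\le0$, giving $\|u(t,\cdot)+v(t,\cdot)\|_{C([g(t),h(t)])}\to0$. Finally, if $\lambda_p(g_\infty,h_\infty)<0$, fix $k\in(0,-\lambda_p(g_\infty,h_\infty))$ and, by continuity, a slightly larger fixed interval $I\supset[g_\infty,h_\infty]$ with $\lambda_p(I)<-k$; writing $(\phi_1,\phi_2)>0$ for the corresponding eigenfunction, $Ce^{-kt}(\phi_1,\phi_2)$ is a supersolution of the fixed-$I$ problem for $C$ large (using $G(z)\le G'(0)z$), so $(u,v)\le Ce^{-kt}(\phi_1,\phi_2)$ on $[g(t),h(t)]\subset I$; hence $e^{kt}\|u(t,\cdot)+v(t,\cdot)\|_\infty$ is bounded, and since $k$ is arbitrary in $(0,-\lambda_p(g_\infty,h_\infty))$ we obtain $e^{kt}\|u+v\|_\infty\to0$ for every such $k$.

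\emph{Case $\ell_\infty=\infty$.} I would first show $\mathcal R_0>1$. If $\mathcal R_0<1$, integrating the equations over $(g(t),h(t))$ (the boundary terms vanish since $u,v=0$ at $g(t),h(t)$) and using $\int_{g}^{h}J_{ij}(x-y)\dx\le1$ and $G(z)\le G'(0)z$ gives $P_1'\le-aP_1+cP_2$, $P_2'\le-bP_2+G'(0)P_1$ for $P_1=\int u$, $P_2=\int v$, whose comparison system decays exponentially because $\mathcal R_0<1$; since $h'(t),-g'(t)\le\mu_1P_1+\mu_2P_2$, this yields $\int_0^\infty(h'+|g'|)\dt<\infty$, i.e. $\ell_\infty<\infty$, a contradiction. (The borderline $\mathcal R_0=1$, where this estimate is not decisive and decay is not exponential, requires a more careful argument to still conclude $\ell_\infty<\infty$.) So $\mathcal R_0>1$ and \eqref{1.2} has a unique positive $(u^*,v^*)$. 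Next, $g_\infty=-\infty$ and $h_\infty=\infty$: if, say, $h_\infty<\infty$ (forcing $g_\infty=-\infty$), then for $\ell$ large and $\eta$ small with $\lambda_p((-\ell,h_\infty-\eta))>0$, the fixed-domain subsolution on $(-\ell,h_\infty-\eta)$ (valid once the moving domain contains it) converges to a positive steady state, so $u$ is bounded below near $x=h_\infty-\eta$, whence $h'(t)\ge\delta>0$ for large $t$, contradicting $h_\infty<\infty$. Finally, on the now full line, $(\bar U,\bar V)\to(u^*,v^*)$ gives $\limsup_{t\to\infty}u(t,x)\le u^*$, $\limsup_{t\to\infty}v(t,x)\le v^*$ locally uniformly, while for each large $\ell$ with $\lambda_p((-\ell/2,\ell/2))>0$ the fixed-domain subsolution on $(-\ell/2,\ell/2)$ converges to the unique positive steady state $(U_\ell,V_\ell)$, which increases to $(u^*,v^*)$ as $\ell\to\infty$; letting $\ell\to\infty$ yields $\liminf u\ge u^*$, $\liminf v\ge v^*$, hence $u\to u^*$, $v\to v^*$ in $C_{loc}(\mathbb R)$, the spreading alternative.

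\emph{Main obstacle.} The crux — exactly the point the introduction emphasizes — is everything resting on the principal eigenvalue problem \eqref{2.9} with its four nested integral terms: the monotonicity and continuity of $\lambda_p(\Omega)$ in $\Omega$, the sign of its limit as $\Omega$ exhausts $\mathbb R$, and the $\operatorname{sign}\lambda_p$-dichotomy of the fixed-domain problem are invoked repeatedly, and (those being established in the earlier sections) the remaining delicate checks are verifying the sub/supersolution inequalities in the presence of the composition $G(\int J_{21}u)$ and the cross term $c\int J_{12}v$, and handling the non-exponential borderline case $\mathcal R_0=1$ (equivalently $\lambda_p(g_\infty,h_\infty)=0$), which is not covered by the simple ODE/mass comparison above.
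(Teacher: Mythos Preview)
Your outline matches the paper's structure almost exactly: the dichotomy is assembled from Lemma~3.1 (the case $h_\infty-g_\infty<\infty$, argued just as you describe via a fixed-interval subsolution contradiction and then a fixed-domain supersolution on $[g_\infty,h_\infty]$) and Lemma~3.2 (the case $h_\infty-g_\infty=\infty$, whose details the paper delegates to \cite{WD1}), with the necessity of $\mathcal R_0>1$ supplied separately by Lemma~3.3.

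The one genuine gap is precisely the borderline $\mathcal R_0=1$ you flag. Your mass comparison $P_1'\le -aP_1+cP_2$, $P_2'\le -bP_2+G'(0)P_1$ has a comparison matrix with top eigenvalue $0$ in that case, so $\int_0^\infty(P_1+P_2)\,\dt$ need not be finite and the bound on $h_\infty-g_\infty$ does not follow. The paper's Lemma~3.3 closes this with a sharper version of the same mass idea: compute
\[
\frac{d}{dt}\int_{g(t)}^{h(t)}\Big(u+\tfrac{c}{b}v\Big)\dx
\]
directly. Using $G(z)\le G'(0)z$ and $\int_{g}^{h}\!\int_{g}^{h}J_{12}v\le\int v$, $\int_{g}^{h}\!\int_{g}^{h}J_{21}u\le\int u$, the reaction remainder is $\le\int\big(-a+\tfrac{cG'(0)}{b}\big)u\,\dx\le0$ for all $\mathcal R_0\le1$, while the nonlocal-diffusion deficit (coming from $\int_{g}^{h}J_{ii}(x-y)\,\dy<1$) is exactly the outward flux appearing in the free-boundary equations and hence is bounded above by $-\min\{d_1/\mu_1,\,cd_2/(b\mu_2)\}\,(h'(t)-g'(t))$. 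Integrating in $t$ then gives an explicit finite bound on $h(t)-g(t)$ with no decay rate needed, so $\mathcal R_0=1$ is covered simultaneously with $\mathcal R_0<1$. That coupling of the diffusion loss to $h'-g'$ is the missing step.

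One small correction: $\mathcal R_0=1$ is not equivalent to $\lambda_p(g_\infty,h_\infty)=0$. When $\mathcal R_0=1$ one has $\lambda_A=0$ and, by Proposition~2.2, $\lambda_p(l_1,l_2)<0$ strictly for every bounded interval; the case $\lambda_p(g_\infty,h_\infty)=0$ only occurs when $\mathcal R_0>1$ and $h_\infty-g_\infty$ equals the critical length $L^*$, and that case is already handled by your fixed-domain tool ($\lambda_p\le0\Rightarrow$ convergence to ${\bf0}$), which the paper records as Lemma~2.5.
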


By virtue of a related principal eigenvalue problem and some comparison arguments, we obtain the following criteria for spreading and vanishing.
\begin{theorem}[Criteria for spreading and vanishing]\label{t1.3} Let $(u,v,g,h)$ be the unique solution of \eqref{1.7}.
\begin{enumerate}[$(1)$]
\item If $\mathcal{R}_0\le1$, vanishing happens.
\item Suppose $\mathcal{R}_0>1$. Then there exists a unique critical length $L^*$ for initial habitat $[-h_0,h_0]$ such that spreading happens if $2h_0\ge L^*$, where $L^*$ is uniquely determined by principal eigenvalue problem \eqref{2.9}.
\item Assume that $\mathcal{R}_0>1$, $2h_0<L^*$ and $\mu_2=f(\mu_1)$ with $f\in C([0,\yy))$, $f(0)=0$ and strictly increasing to $\yy$. Then there exists a unique $\mu^*_1$ such that spreading happens if and only if $\mu_1>\mu^*_1$.
\item Let $\mathcal{R}_0>1$, $2h_0<L^*$ and $J_{ii}>0$ in $\mathbb{R}$ for $i=1,2$. We parameterize the initial data $(u_0,v_0)=(\tau\vartheta_1,\tau\vartheta_2)$ with $\tau>0$ and $(\vartheta_1,\vartheta_2)$ satisfying {\bf(H)}. Then there exists a unique $\tau^*$ such that spreading happens if and only if $\tau>\tau^*$.
\item Assume that $\mathcal{R}_0>1$, $c=G'(0)$, $J_{12}=J_{21}$ and $d_2=f(d_1)$ with $f$ defined as above. Then there exists a unique $\tilde{L}^*<L^*$ depending only on $(a,b,c,J_{12})$ such if $2h_0>\tilde{L}^*$, then we can find a unique $d^*_1$ such that spreading happens if $d_1\le d^*$, while if $d_1>d^*_1$, whether spreading or vanishing occurs depends on $\mu_i$ for $i=1,2$ as in {\rm (3)}. If $2h_0\le \tilde{L}^*$, then for any $d_1>0$, both spreading and vanishing may happen, depending on $\mu_i$ for $i=1,2$ as in {\rm (3)}.
\end{enumerate}
\end{theorem}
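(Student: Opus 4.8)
The plan is to derive all five statements from material already in place: the properties of the principal eigenvalue $\lambda_p$ from Section~2 — for a bounded habitat $\Omega$ it is well defined, continuous and strictly increasing in $\Omega$ under inclusion, translation invariant, $\lambda_p(\Omega)\to-\yy$ as $|\Omega|\to0$, and $\lambda_p(\Omega)\to\lambda_\yy$ as $\Omega\uparrow\mathbb{R}$ with $\lambda_\yy>0\iff\mathcal{R}_0>1$; the dichotomy of Theorem~\ref{t1.2} together with the criterion obtained in the course of its proof, namely that spreading occurs precisely when $\lambda_p(g_\yy,h_\yy)>0$ (equivalently, when $h_\yy-g_\yy=\yy$), so that a bounded terminal interval forces $\lambda_p(g_\yy,h_\yy)\le0$ and hence vanishing; and the comparison principles, which give that $(g_\yy,h_\yy)$ enlarges under inclusion when $\mu_1,\mu_2$ increase, when $(u_0,v_0)$ increases, or when $[-h_0,h_0]$ enlarges. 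Granting these, part~(1) is immediate: by Theorem~\ref{t1.2}(1) spreading forces $\mathcal{R}_0>1$, so $\mathcal{R}_0\le1$ leaves only vanishing. For part~(2) one sets $\lambda_p(L):=\lambda_p((-L/2,L/2))$; the listed properties yield a unique $L^*>0$ with $\lambda_p(L^*)=0$, and if $2h_0\ge L^*$ then for every $t>0$ the habitat $(g(t),h(t))$ strictly contains $[-h_0,h_0]$ (since $-g,h$ are strictly increasing), so $\lambda_p(g(t),h(t))>\lambda_p(L^*)=0$; as $(g(t),h(t))\subseteq(g_\yy,h_\yy)$, the criterion then excludes vanishing, i.e.\ spreading happens.

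Parts~(3) and (4) run on a single scheme with a scalar parameter $\theta$ — either $\theta=\mu_1$ with $\mu_2=f(\mu_1)$, or $\theta=\tau$ with $(u_0,v_0)=(\tau\vartheta_1,\tau\vartheta_2)$. By the comparison principles $(g_\yy,h_\yy)$ is nondecreasing in $\theta$, so the set $S$ of parameters for which spreading occurs is an up-set in $(0,\yy)$. I would then show: (a) for $\theta$ small, vanishing holds, using the a priori bound $h_\yy-g_\yy\to 2h_0$ as $\theta\to0$ (the parenthetical fact already quoted in Theorem~\ref{t1.2}); choosing $\theta$ so small that $h_\yy-g_\yy<L^*$ gives $\lambda_p(g_\yy,h_\yy)=\lambda_p(h_\yy-g_\yy)<\lambda_p(L^*)=0$, hence vanishing; (b) for $\theta$ large, spreading holds, by fitting below $(u,v,g,h)$ a sub-solution whose support grows past length $L^*$ in finite time while staying positive; (c) $S$ is open — if spreading occurs at $\theta_0$ then $\lambda_p(g(t_0),h(t_0))>0$ for some $t_0$, and the continuous dependence of $(g(t_0),h(t_0))$ on $\theta$ together with continuity of $\lambda_p$ in the endpoints keeps this strict inequality, hence spreading, for $\theta$ near $\theta_0$. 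From (a)--(c), $S=(\theta^*,\yy)$ for a unique $\theta^*$, which is the assertion. In part~(4) the extra hypothesis $J_{ii}>0$ on $\mathbb{R}$ is what supplies the strong comparison and strict positivity used in steps (b) and (c).

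For part~(5) one fixes $a,b,c,G,\mu_i$, lets $d_1$ vary with $d_2=f(d_1)$, and writes $\lambda_p(\Omega;d_1)$, $L^*(d_1)$ for the corresponding quantities. The role of $c=G'(0)$ and $J_{12}=J_{21}$ is that they make the linearized operator self-adjoint, so $\lambda_p$ acquires a variational characterization; from it the diffusion contributions $-d_i(I-\mathcal{J}_{ii})$ read as strictly positive perturbations being subtracted, so $\lambda_p(\Omega;d_1)$ is continuous and strictly decreasing in $d_1$, whence $L^*(d_1)$ is continuous and strictly increasing, with $L^*(d_1)\to\yy$ as $d_1\to\yy$ (the diffusion terms dominate the Rayleigh quotient on any fixed bounded $\Omega$, while the value $\lambda_\yy>0$ at $\Omega=\mathbb{R}$ is independent of $d_1$). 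As $d_1\to0^+$ we have $d_2=f(d_1)\to0$ and $\lambda_p(\Omega;d_1)\to\lambda_p(\Omega;0)$, the principal eigenvalue of the diffusionless system; because $c=G'(0)$ and $J_{12}=J_{21}$ that system collapses to $(\lambda+a)(\lambda+b)\psi=c^2\mathcal{K}^2\psi$ on $\Omega$, where $\mathcal{K}\varphi:=\int_\Omega J_{12}(x-y)\varphi(y)\,dy$, so that
\be
\lambda_p(\Omega;0)=\frac{1}{2}\Big(-(a+b)+\sqrt{(a-b)^2+4c^2\,\Lambda(\Omega)^2}\,\Big),
\ee
with $\Lambda(\Omega)\in(0,1)$ the principal eigenvalue of $\mathcal{K}$ on $\Omega$, increasing from $0$ to $1$ as $|\Omega|$ grows. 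Hence $L^*(d_1)\downarrow\tilde L^*$ as $d_1\to0^+$, where $\tilde L^*$ is the unique length with $\Lambda((-\tilde L^*/2,\tilde L^*/2))=\sqrt{ab}/c$; since $\mathcal{R}_0=c^2/(ab)>1$ gives $\sqrt{ab}/c<1$, this $\tilde L^*\in(0,\yy)$ depends only on $(a,b,c,J_{12})$ and $\tilde L^*<L^*(d_1)$ for every $d_1>0$. Now if $2h_0>\tilde L^*$, strict monotonicity and the two limiting values furnish a unique $d_1^*$ with $L^*(d_1^*)=2h_0$; for $d_1\le d_1^*$ we have $2h_0\ge L^*(d_1)$, so part~(2) gives spreading, while for $d_1>d_1^*$ we have $2h_0<L^*(d_1)$, so part~(3) applies. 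If instead $2h_0\le\tilde L^*$, then $2h_0<L^*(d_1)$ for all $d_1>0$, so part~(3) applies throughout.

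\emph{Main obstacle.} Once Section~2's eigenvalue results and the dichotomy are granted, parts~(1)--(4) are a fairly routine run of the eigenvalue–comparison machinery; the delicate technical points there are the uniform a priori bound ``$h_\yy-g_\yy\to 2h_0$ as the parameter tends to $0$'' and the openness of the spreading set, both of which rest on parameter-continuity of $(g,h)$ on compact time intervals. The real work is part~(5): isolating the $d_1$-independent threshold $\tilde L^*$, which hinges entirely on the reduction of the diffusionless principal eigenvalue problem to a single nonlocal eigenvalue problem for $\mathcal{K}^2$ afforded by $c=G'(0)$ and $J_{12}=J_{21}$, and on controlling $\lambda_p(\Omega;d_1)$ uniformly both as $d_1\to0^+$ and as $d_1\to\yy$ so that $L^*(d_1)$ sweeps out all of $(\tilde L^*,\yy)$.
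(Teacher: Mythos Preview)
Your architecture matches the paper's: parts (1)--(2) come from the dichotomy plus monotonicity/continuity of $\lambda_p$ in the habitat length; parts (3)--(4) are a monotonicity-plus-endpoints argument in a scalar parameter, with openness of the spreading set settled by continuous dependence at a finite time $t_0$; and part (5) runs on the $d_1$-monotonicity of $\lambda_p$ furnished by the variational characterization. Your treatment of part (5) is in fact more explicit than the paper's --- the closed form $\lambda_p(\Omega;0)=\tfrac12\big(-(a+b)+\sqrt{(a-b)^2+4c^2\Lambda(\Omega)^2}\big)$ and the identification $\Lambda(\tilde L^*)=\sqrt{ab}/c$ are correct and illuminating, whereas the paper simply invokes Propositions~2.2--2.3 for the existence and limits of $L^*(d_1)$.

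The one genuine soft spot is your step (b) when $\theta=\tau$. You propose a sub-solution whose support overtakes $L^*$ in finite time, but with $2h_0<L^*$ the linearization on the initial habitat has negative principal eigenvalue, so large initial data decays exponentially and $h'(t)$ decays with it; a workable sub-solution is not obvious. The paper (Lemma~3.6) does \emph{not} build one. It argues by contradiction: if vanishing held for all $\tau$, then $h_\infty-g_\infty\le L^*$ and the inequality
\[
h'(t)\ \ge\ \min_{i}\Big(\mu_i\!\int_{L^*}^{\infty}\!J_{ii}(y)\,dy\Big)\int_{g(t)}^{h(t)}\!\big(u+v\big)\,dx
\]
gives $\int_0^\infty\!\int(u+v)\,dx\,dt$ bounded above independently of $\tau$, while integrating the PDE furnishes a lower bound growing linearly in $\tau$ --- contradiction. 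This is exactly where $J_{ii}>0$ on $\mathbb{R}$ enters: it guarantees the kernel tails $\int_{L^*}^\infty J_{ii}>0$, not ``strong comparison and strict positivity'' as you wrote. For $\theta=\mu_1$ (Lemma~3.8) the paper again avoids a direct sub-solution: it compares with the solution having $\mu_2=0$, which is a genuine lower solution, and then cites the single-rate result from \cite{DN4}; no tail hypothesis on $J_{ii}$ is needed there. Finally, your step (a) is morally right but the citation is off: the parenthetical in Theorem~1.2 asserts $\lambda_p(g_\infty,h_\infty)<0$ for small $\mu_1+\mu_2$, not $h_\infty-g_\infty\to 2h_0$; the paper obtains the latter by the explicit upper solution $(Me^{-\delta t}\varphi_1,\,Me^{-\delta t}\varphi_2,\,-\bar h,\,\bar h)$ with $\bar h(t)=h_0(1+\ep(1-e^{-\delta t}))$ in Lemma~3.5.
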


This paper is arranged as follows. Section 2 is devoted to some preliminary works, involving a principal eigenvalue problem, a steady state problem and a fixed boundary problem. Especially, a relatively complete understanding for asymptotic behaviors of principal eigenvalue is given. Section 3 deals with the dynamics of \eqref{1.7} by using the results in Section 2 and some comparison arguments. Section 4 concerns a brief discussion about the effect of nonlocal reaction term on spreading and vanishing. We compare the principal eigenvalues associated to models \eqref{1.3}-\eqref{1.7}, and find a relationship between their critical lengths of initial habitat which implies that the more nonlocal reaction terms a model has, the harder spreading happens for this model.

Before ending the introduction, we would like to mention that compared to the free boundary problem with random diffusion as in \cite{DL} whose spreading speed is always finite, the counterpart with nonlocal diffusion as in \cite{CDLL} can have an infinite spreading speed if kernel function violates a threshold condition (see \cite{DLZJMPA}). In recent years, many works concerning the free boundary problem with nonlocal diffusion have emerged along different directions. For example, one can refer to \cite{CQW,DN2,DN3,DN4,NV,PLL,DWZ,DN5,ZLZ,LLW1} and the references therein.

 \section{Some preliminary works}
This section involves a principal eigenvalue problem, a steady state problem  and a fixed boundary problem. The understanding for such problems will pave the road for our later discussion for \eqref{1.7}. Let us begin with studying the principal eigenvalue problem associated to \eqref{1.7}. Our ideas come from \cite{SWZ,LCW,HMMV,SLLW,Zhanglei}. We would like to mention that in \cite{WD1}, they study the corresponding principal eigenvalue problem by resorting to a perturbation method (see \cite{Bu}) and Krein-Rutman theorem.

Suppose that $a_{12}$, $a_{21}$, $b_1$, $b_2$ are positive, $a_{ii}\ge0$, and $l_1,l_2\in\mathbb{R}$.  Define the nonlocal operator
\[\mathcal{L}[\varphi](x):=\mathcal{P}[\varphi](x)+H\varphi(x), ~ ~x\in[l_1,l_2],\]
where $\varphi=(\varphi_1(x),\varphi_2(x))^T$,
\[\mathcal{P}[\varphi](x)=\begin{pmatrix}
                                             \dd a_{11}\int_{l_1}^{l_2}J_{11}(x-y)\varphi_1(y)\dy+ a_{12}\int_{l_1}^{l_2}J_{12}(x-y)\varphi_2(y)\dy\\
                                              \dd a_{21}\int_{l_1}^{l_2}J_{21}(x-y)\varphi_1(y)\dy+ a_{22}\int_{l_1}^{l_2}J_{22}(x-y)\varphi_2(y)\dy
                                           \end{pmatrix}, ~ ~ H=\begin{pmatrix}
                                                                     -b_1 & 0 \\
                                                                     0 & -b_2
                                                                 \end{pmatrix}.\]
Thus $\mathcal{L}[\varphi]=\lambda\varphi$ exactly takes the form of
 \bes\left\{\begin{aligned}\label{2.1}
&a_{11}\int_{l_1}^{l_2}J_{11}(x-y)\varphi_1(y)\dy+a_{12}\int_{l_1}^{l_2}J_{12}(x-y)\varphi_2(y)\dy-b_{1}\varphi_1=\lambda\varphi_1, & &x\in[l_1,l_2],\\[1mm]
&a_{21}\int_{l_1}^{l_2}J_{21}(x-y)\varphi_1(y)\dy+a_{22}\int_{l_1}^{l_2}J_{22}(x-y)\varphi_2(y)\dy-b_{2}\varphi_2=\lambda\varphi_2, & &x\in[l_1,l_2].
 \end{aligned}\right.
 \ees
For clarity, we make the following denotations
\bess
&&X=[C([l_1,l_2])]^2, ~ ~ X^+=\{\varphi\in X: \varphi_1\ge0,~ \varphi_2\ge0\}, ~ ~X^{++}=\{\varphi\in X^+: \varphi_1>0,~\varphi_2>0\}, \\
&&E=[L^2([l_1,l_2])]^2, ~ ~ \big\langle\varphi,\psi\big\rangle=\sum_{i=1}^{2}\int_{l_1}^{l_2}\varphi_i(x)\psi_i(x)\dx, ~ ~ \|\varphi\|_2=\sqrt{\big\langle\varphi,\varphi\big\rangle}, ~ ~ {\bf 0}=(0,0),\\
&&\mathcal{L}_i[\varphi]=a_{i1}\int_{l_1}^{l_2}J_{i1}(x-y)\varphi_1(y)\dy+ a_{i2}\int_{l_1}^{l_2}J_{i2}(x-y)\varphi_2(y)\dy-b_i\varphi_i.
\eess
Define
\[\lambda_p=\inf\{\lambda\in\mathbb{R}: \mathcal{L}[\varphi]\le\lambda\varphi ~ {\rm for ~ some ~}\varphi\in X^{++}\}.\]
Clearly, $\lambda_p$ is equivalent to
\[\lambda_p=\inf_{\varphi\in X^{++}}\sup_{x\in[l_1,l_2], ~ i=1,2}\frac{\mathcal{L}_i[\varphi](x)}{\varphi_i(x)}.\]

It is well known that $\lambda$ is a principal eigenvalue if it is simple and its eigenfunction $\varphi\in X^{++}$.  The next Strong Maximum Principle and Touching Lemma are curcial for proving the existence of the principal eigenvalue of \eqref{2.1}.

 \begin{lemma}[Strong Maximum Principle]\label{l2.1}If $\varphi\in X^{+}\setminus\{{\bf0}\}$ satisfies $\mathcal{L}[\varphi]\le{\bf0}$, then $\varphi\in X^{++}$.
 \end{lemma}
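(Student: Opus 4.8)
The plan is to argue by contradiction at a point where one component of $\varphi$ attains zero, and then propagate positivity through the kernels using their positivity at the origin together with an open–closed (connectedness) argument, exactly in the spirit of strong maximum principles for nonlocal operators. So suppose $\varphi=(\varphi_1,\varphi_2)\in X^+\setminus\{\mathbf{0}\}$ satisfies $\mathcal{L}[\varphi]\le\mathbf{0}$, i.e.\
\[
a_{i1}\int_{l_1}^{l_2}J_{i1}(x-y)\varphi_1(y)\dy+a_{i2}\int_{l_1}^{l_2}J_{i2}(x-y)\varphi_2(y)\dy\le b_i\varphi_i(x),\qquad i=1,2,\ x\in[l_1,l_2].
\]
First I would observe that since each integrand is nonnegative, the left-hand side is $\ge0$, hence $\varphi_i(x)\ge0$ automatically, and moreover if $\varphi_i(x_0)=0$ for some $x_0$ then necessarily
\[
a_{i1}\int_{l_1}^{l_2}J_{i1}(x_0-y)\varphi_1(y)\dy=0=a_{i2}\int_{l_1}^{l_2}J_{i2}(x_0-y)\varphi_2(y)\dy.
\]

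Next I would set up the connectedness argument. Define $Z_i=\{x\in[l_1,l_2]:\varphi_i(x)=0\}$; by continuity each $Z_i$ is closed. The goal is to show that if $Z_1\cup Z_2\neq\emptyset$ then $\varphi\equiv\mathbf{0}$, contradicting the hypothesis; equivalently, each $Z_i$ is either empty or all of $[l_1,l_2]$. The key mechanism: because $a_{21}>0$ and $J_{21}$ is continuous with $J_{21}(0)>0$, if $x_0\in Z_2$ then $\int J_{21}(x_0-y)\varphi_1(y)\dy=0$, and since $J_{21}>0$ on a neighborhood of $0$ (hence $J_{21}(x_0-y)>0$ for $y$ near $x_0$ within $[l_1,l_2]$), we get $\varphi_1\equiv0$ on a neighborhood of $x_0$ in $[l_1,l_2]$; thus $x_0\in \operatorname{int}Z_1$ relative to $[l_1,l_2]$. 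Symmetrically, using $a_{12}>0$, $J_{12}(0)>0$, any $x_0\in Z_1$ forces $\varphi_2\equiv0$ near $x_0$, so $x_0\in\operatorname{int}Z_2$. Also, using $a_{11}\ge0$: if $a_{11}>0$ then $x_0\in Z_1$ additionally forces $\varphi_1\equiv0$ near $x_0$; but even without this, the cross terms already give what we need. Combining: $Z_1\subseteq\operatorname{int}Z_2$ and $Z_2\subseteq\operatorname{int}Z_1$, whence $Z_1\cup Z_2$ is open in $[l_1,l_2]$; it is also closed; since $[l_1,l_2]$ is connected, $Z_1\cup Z_2$ is either empty or all of $[l_1,l_2]$. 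In the latter case every $x$ lies in $Z_1$ or $Z_2$, and then the inclusions force $Z_1=Z_2=[l_1,l_2]$, i.e. $\varphi\equiv\mathbf{0}$, a contradiction. Hence $Z_1\cup Z_2=\emptyset$, i.e. $\varphi_1>0$ and $\varphi_2>0$ throughout $[l_1,l_2]$, which is $\varphi\in X^{++}$.

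The one technical point I would handle carefully — and the step I expect to be the main obstacle — is the local propagation near the endpoints $l_1,l_2$: there ``a neighborhood of $x_0$ in $[l_1,l_2]$'' is one-sided, and I must check that $J_{21}(x_0-y)>0$ still holds for $y$ in a one-sided neighborhood of $x_0$ intersected with $[l_1,l_2]$, which it does because $J$ is continuous and $J(0)>0$ gives $J>0$ on a full two-sided interval around $0$. A second subtlety is that I only know $J_{ij}(0)>0$ and continuity, not $J_{ij}>0$ everywhere, so the propagation is genuinely local (a single step enlarges the zero set by a fixed-size neighborhood), but the open/closed dichotomy on the connected interval $[l_1,l_2]$ absorbs this and no iteration bookkeeping is needed. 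The hypotheses $a_{12},a_{21}>0$ are exactly what makes the two components ``talk to each other'' so that a zero of either one is fatal to the whole pair; the sign condition $a_{ii}\ge0$ is all that is used for the diagonal terms (they only ever help, never hurt, the inequality on the left-hand side). This completes the plan.
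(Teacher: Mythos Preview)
Your argument is correct and complete, and it takes a genuinely different route from the paper. The paper's proof works by picking a boundary point of the zero set: assuming $\varphi_1(x_1)=0$ with $\varphi_1>0$ on a one-sided neighborhood of $x_1$, it substitutes $x_1$ directly into $\mathcal{L}_1[\varphi]\le0$ and derives a contradiction, splitting into cases according to whether $a_{11}$ and $a_{22}$ are positive or zero (and bouncing to the second component via $x_2$ when $a_{11}=a_{22}=0$). Your approach instead packages the local propagation into an open--closed connectedness argument on $Z_1\cup Z_2$, driven solely by the off-diagonal couplings $a_{12},a_{21}>0$ and $J_{12}(0),J_{21}(0)>0$. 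This buys you a cleaner proof with no case analysis on the diagonal coefficients $a_{11},a_{22}$ (they never enter), and the endpoint handling is absorbed automatically by working in the relative topology of $[l_1,l_2]$. The paper's method, on the other hand, is more hands-on and perhaps easier to follow for someone unfamiliar with the clopen trick; both rest on the same essential mechanism, namely that $J(0)>0$ plus continuity forces a zero of one component to contaminate the other in a full neighborhood.
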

 \begin{proof}
 Assume on the contrary that $\dd\min\{\min_{x\in[l_1,l_2]}\varphi_1(x),\min_{x\in[l_1,l_2]}\varphi_1(x)\}=0$. Then there exists a $x_0\in[l_1,l_2]$ such that $\varphi_1(x_0)=0$ or $\varphi_2(x_0)=0$. Without loss of generality, we suppose that $\varphi_1(x_0)=0$. If $\varphi_1(x)\equiv0$ in $[l_1,l_2]$, then it is easy to see $\varphi_2(x)\equiv0$ in $[l_1,l_2]$ which contradicts the assumption $\varphi\in X^{+}\setminus\{{\bf0}\}$. Thus $\varphi_1(x)\not\equiv0$ in $[l_1,l_2]$. By continuity, there exists a $x_1\in[l_1,l_2]$ such that $\varphi_1(x_1)=0$ and $\varphi_1(x)>0$ in some left or right small neighborhood of $x_1$. If $a_{11}>0$, then substituting $x_1$ into $\mathcal{L}_1[\varphi]\le0$ leads to
 \[0<a_{11}\int_{l_1}^{l_2}J_{11}(x_1-y)\varphi_1(y)\dy+a_{12}\int_{l_1}^{l_2}J_{12}(x_1-y)\varphi_2(y)\dy\le0.\]
 This is a contradiction.

 If $a_{11}=0$, we have
 \[\int_{l_1}^{l_2}J_{12}(x_1-y)\varphi_2(y)\dy=0,\]
 which, by condition {\bf (J)}, implies that $\varphi_2(x)=0$ somewhere on $[l_1,l_2]$. If $a_{22}>0$, then a similar contradiction can be derived. If $a_{22}=0$, recalling $\varphi_2\not\equiv0$ in $[l_1,l_2]$,  we can choose a $x_2\in[l_1,l_2]$ such that $\varphi_2(x_2)=0$ and $\varphi_2(x)>0$ in some left or right small neighborhood of $x_2$. Taking $x_2$ into $\mathcal{L}_1[\varphi]\le0$ yields $\varphi_1(x_2)>0$. Then substituting $x_2$ into $\mathcal{L}_2[\varphi]\le0$ leads to a contradiction. Therefore, $\varphi\in X^{++}$.
 The proof is complete.
 \end{proof}

 \begin{lemma}[Touching Lemma]\label{l2.2}Assume that there exists a $\varphi\in X^{++}$ such that $\mathcal{L}[\varphi]\le {\bf0}$. If $\mathcal{L}[\psi]\le{\bf0}$ for some $\psi\in X$, then either $\psi\ge{\bf0}$, or $\psi$ is a negative constant multiple of $\varphi$ and $\mathcal{L}[\varphi]=\mathcal{L}[\psi]={\bf0}$.
 \end{lemma}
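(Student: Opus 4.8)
The plan is to prove the Touching Lemma by a standard sliding/scaling argument adapted to the cooperative nonlocal system. Since $\varphi\in X^{++}$ with $\mathcal{L}[\varphi]\le{\bf 0}$ and $\varphi$ has positive continuous components on the compact interval $[l_1,l_2]$, the quantity
\[
\kappa:=\inf\{s>0:\ s\varphi_i(x)+\psi_i(x)\ge 0\ \text{for all }x\in[l_1,l_2],\ i=1,2\}
\]
is well defined and finite. First I would dispose of the easy case: if $\kappa=0$, then $\psi\ge{\bf 0}$ and we are done. So assume $\kappa>0$ and set $w:=\kappa\varphi+\psi$. By construction $w\in X^{+}$, and by linearity $\mathcal{L}[w]=\kappa\mathcal{L}[\varphi]+\mathcal{L}[\psi]\le{\bf 0}$. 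The definition of $\kappa$ as an infimum, together with continuity of all functions on the compact set $[l_1,l_2]$, forces $w$ to vanish somewhere: there is a point $x_*\in[l_1,l_2]$ and an index $i$ with $w_i(x_*)=0$. Hence $w\in X^{+}$ is \emph{not} in $X^{++}$.

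The crux is then to invoke the Strong Maximum Principle, Lemma~\ref{l2.1}: it says any $\varphi\in X^{+}\setminus\{{\bf 0}\}$ with $\mathcal{L}[\varphi]\le{\bf 0}$ must lie in $X^{++}$. Since $w$ satisfies $\mathcal{L}[w]\le{\bf 0}$ and $w\in X^{+}$ but $w\notin X^{++}$, the only remaining possibility allowed by Lemma~\ref{l2.1} is $w={\bf 0}$; that is, $\psi=-\kappa\varphi$ with $\kappa>0$, so $\psi$ is a negative constant multiple of $\varphi$. It then remains to upgrade the inequalities to equalities. From $\psi=-\kappa\varphi$ we get $\mathcal{L}[\psi]=-\kappa\mathcal{L}[\varphi]$; combined with $\mathcal{L}[\varphi]\le{\bf 0}$ and $\mathcal{L}[\psi]\le{\bf 0}$ and $\kappa>0$, this yields $\mathcal{L}[\varphi]\ge{\bf 0}$ and $\mathcal{L}[\varphi]\le{\bf 0}$ simultaneously, hence $\mathcal{L}[\varphi]={\bf 0}$, and likewise $\mathcal{L}[\psi]={\bf 0}$.

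The step I expect to be the main obstacle is verifying carefully that $w$ genuinely \emph{touches zero}, i.e.\ that the infimum defining $\kappa$ is attained in the sense that $w_i(x_*)=0$ for some $x_*,i$. This needs the compactness of $[l_1,l_2]$ and the joint continuity of $\varphi_i,\psi_i$: for $s$ slightly below $\kappa$ there is a point where $s\varphi_i+\psi_i<0$, and letting $s\uparrow\kappa$ along a convergent subsequence of such points (using Bolzano--Weierstrass on $[l_1,l_2]$) produces a limit point where $\kappa\varphi_i+\psi_i\le 0$; combined with $w\ge{\bf 0}$ this gives $w_i=0$ there. One should also double-check the degenerate situation where $\psi$ already equals $-\kappa\varphi$ with the components of $w$ vanishing identically rather than at an isolated point — but this is exactly the conclusion we want, so no separate argument is needed. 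Everything else is linear bookkeeping.
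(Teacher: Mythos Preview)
Your proof is correct and follows essentially the same sliding/scaling argument as the paper. The only cosmetic difference is that the paper parametrizes the combination as $\varphi+\delta\psi$ and takes the smallest $\delta^*$ at which it touches zero, whereas you use $\kappa\varphi+\psi$ with $\kappa$ the infimum of admissible $s$; these are related by $\kappa=1/\delta^*$, and the invocation of Lemma~\ref{l2.1} and the final upgrade to $\mathcal{L}[\varphi]=\mathcal{L}[\psi]={\bf 0}$ are identical in spirit (you spell the latter out more explicitly).
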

 \begin{proof}If $\psi\in X^+$, we have nothing to prove. If $\psi\not\in X^+$, then $\psi_1(x)$ or $\psi_2(x)$ must be negative somewhere. Define $\phi=\varphi+\delta\psi$ for $\delta>0$. It is easy to see that $\phi\in X^{++}$ for sufficiently small $\delta>0$, while $\phi_1(x)$ or $\phi_2(x)$ is negative somewhere if $\delta$ is large enough. By continuity, there exists a smallest $\delta^*>0$ such that $\phi\in X^+$ and $\phi_1(x)$ or $\phi_2(x)$ vanishes somewhere. Due to $\mathcal{L}[\phi]\le{\bf 0}$, by Lemma \ref{l2.1}, we see $\phi={\bf0}$ and $\psi=-\frac{1}{\delta^*}\varphi$. Moreover, $\mathcal{L}[\varphi]=\mathcal{L}[\psi]={\bf0}$. The proof is ended.
 \end{proof}
\begin{proposition}\label{p2.1} Let $\lambda_p$ be defined as above. The following statements are valid.
 \begin{enumerate}[$(1)$]
 \item $\lambda_{p}$ is an eigenvalue of operator $\mathcal{L}$ with a corresponding eigenfunction $\varphi_p\in X^{++}$.

 \item The algebraic multiplicity of $\lambda_p$ is equal to one, which implies $\lambda_p$ is simple.

 \item If there exists an eigenpair $(\lambda,\varphi)$ with $\varphi\in X^+\setminus\{{\bf0}\}$, then $\lambda=\lambda_p$ and $\varphi$ is a positive constant multiple of $\varphi_p$.

 \item If $a_{12}=a_{21}$ and $J_{12}=J_{21}$, then we have the following variational characteristic
 \[\lambda_p=\sup_{\|\varphi\|_2=1}\langle\mathcal{L}[\varphi],\varphi\rangle.\]
 \end{enumerate}
\end{proposition}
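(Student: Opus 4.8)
The plan is to follow the now-classical route for nonlocal cooperative eigenvalue problems, using the Strong Maximum Principle (Lemma 2.1) and the Touching Lemma (Lemma 2.2) as the backbone, and only invoking Hilbert-space arguments for part (4). For part (1), I would first establish basic properties of $\lambda_p$: it is finite (bounded below by $-\max\{b_1,b_2\}$ and above by taking $\varphi\equiv(1,1)^T$), and the infimum in the variational formula $\lambda_p=\inf_{\varphi\in X^{++}}\sup_{x,i}\mathcal{L}_i[\varphi](x)/\varphi_i(x)$ is attained in a suitable sense. The standard device is to consider the perturbed operators: for $\ep>0$ small, replace $\mathcal{P}$ by $\mathcal{P}+\ep\,\mathrm{Id}$-type perturbations, or more directly consider $\mathcal{L}_\ep[\varphi]=\mathcal{L}[\varphi]+\ep\int_{l_1}^{l_2}(\varphi_1+\varphi_2)\,\mathrm{d}y\,(1,1)^T$, which makes the operator irreducible and strongly positive; apply Krein–Rutman-type reasoning or, following the cited elementary approach, construct a minimizing sequence $\varphi^n\in X^{++}$ with $\sup_{x,i}\mathcal{L}_i[\varphi^n]/\varphi^n_i\to\lambda_p$, normalize, and use the compactness coming from the fact that $\mathcal{P}[\varphi]$ lands in an equicontinuous family (since each $J_{ij}\in C(\mathbb{R})\cap L^\infty$ convolution smooths). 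The limit $\varphi_p$ then satisfies $\mathcal{L}[\varphi_p]\le\lambda_p\varphi_p$; I would then argue that equality must hold by a touching/minimality argument: if $\mathcal{L}_i[\varphi_p](x_0)<\lambda_p\varphi_{p,i}(x_0)$ somewhere, a small perturbation of $\varphi_p$ in the $i$-th component strictly decreases the sup, contradicting that $\lambda_p$ is the infimum. One must also check $\varphi_p\in X^{++}$ (not merely $X^+$), which is exactly where Lemma 2.1 applies to $(\mathcal{L}-\lambda_p)[\varphi_p]\le\mathbf{0}$.

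For part (3), suppose $(\lambda,\varphi)$ is an eigenpair with $\varphi\in X^+\setminus\{\mathbf{0}\}$. Lemma 2.1 applied to $(\mathcal{L}-\lambda)[\varphi]=\mathbf{0}\le\mathbf{0}$ gives $\varphi\in X^{++}$. Then from the definition of $\lambda_p$ as an infimum over $X^{++}$, plugging in $\varphi$ gives $\lambda_p\le\lambda$. Conversely, to get $\lambda\le\lambda_p$, I would use the eigenfunction $\varphi_p$ from part (1) and apply the Touching Lemma: consider $\psi=\varphi_p-t\varphi$ for $t>0$; since $\varphi_p\in X^{++}$ satisfies $(\mathcal{L}-\lambda_p)[\varphi_p]=\mathbf 0\le\mathbf 0$, and $(\mathcal{L}-\lambda_p)[\psi]=(\lambda-\lambda_p)(-t\varphi)+\ldots$ — here I need $\lambda\ge\lambda_p$ already, so I would instead compare directly: if $\lambda>\lambda_p$, then $(\mathcal L-\lambda)[\varphi_p]\le(\lambda_p-\lambda)\varphi_p<\mathbf 0$, so $\varphi_p$ is a strict supersolution for the operator $\mathcal L-\lambda$; sliding $\varphi_p$ against $\varphi$ via the Touching Lemma forces $\varphi$ to be a negative multiple of $\varphi_p$, impossible since both are in $X^{++}$. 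Hence $\lambda=\lambda_p$, and applying the Touching Lemma once more to $\varphi$ and $\varphi_p$ shows they are positive constant multiples of each other.

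For part (2), simplicity: part (3) already shows the geometric eigenspace is one-dimensional. For algebraic simplicity I would show there is no generalized eigenfunction, i.e. $\mathcal{L}[\phi]-\lambda_p\phi=\varphi_p$ has no solution $\phi\in X$. Suppose it did; then pair in a suitable way — either test against the principal eigenfunction $\varphi_p^*$ of the adjoint operator $\mathcal{L}^*$ (whose existence follows by the same argument applied to the transposed coefficient matrix, noting $\langle\mathcal{L}[\varphi],\psi\rangle=\langle\varphi,\mathcal{L}^*[\psi]\rangle$ with $J_{ij}$ even so the kernels transpose cleanly), getting $0=\langle\varphi_p,\varphi_p^*\rangle>0$, a contradiction. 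For part (4), under $a_{12}=a_{21}$, $J_{12}=J_{21}$ the operator $\mathcal{L}$ is self-adjoint on $E$, so $\sup_{\|\varphi\|_2=1}\langle\mathcal{L}[\varphi],\varphi\rangle$ is the top of the spectrum; I would show it equals $\lambda_p$ by sandwiching — $\langle\mathcal{L}[\varphi_p],\varphi_p\rangle=\lambda_p\|\varphi_p\|_2^2$ gives "$\ge$", and for "$\le$" use that $\mathcal{L}[\varphi]\le\lambda_p\varphi$ fails for no $\varphi\in X^{++}$ beyond $\lambda_p$, combined with density of $X^{++}$-dominated functions, or more cleanly invoke that the Rayleigh quotient supremum is an eigenvalue with a nonnegative eigenfunction (by a standard argument on the symmetrized kernel) which by part (3) must be $\lambda_p$. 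The main obstacle I anticipate is part (1), specifically securing the attainment of the infimum and upgrading the supersolution inequality $\mathcal{L}[\varphi_p]\le\lambda_p\varphi_p$ to an equality while keeping $\varphi_p$ strictly positive — the four distinct integral kernels and the possibility $a_{ii}=0$ mean the "irreducibility" needed for the SMP is delicate, and it is precisely there that the case analysis in Lemma 2.1 (the $a_{11}=0$, $a_{22}=0$ branches) has to be leaned on carefully.
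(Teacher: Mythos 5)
Your framework — Strong Maximum Principle plus Touching Lemma as the backbone, and self-adjointness plus a nonnegative-limit argument for (4) — matches the paper, and your treatments of (3) and (4) are essentially the paper's. But your part (1) has a genuine gap. Compactness of $\mathcal{P}$ does not by itself make a minimizing sequence $\varphi^n$ satisfying $\mathcal{L}[\varphi^n]\le(\lambda_p+\varepsilon_n)\varphi^n$ precompact in $X$: you only control $\mathcal{P}[\varphi^n]$, and because the relation is an inequality rather than an identity you cannot write $\varphi^n=(\lambda\mathcal{I}-\mathcal{H})^{-1}[\mathcal{P}[\varphi^n]+\cdots]$ to transfer that compactness to $\varphi^n$ itself. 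Even granting a limit $\varphi_p$ with $\mathcal{L}[\varphi_p]\le\lambda_p\varphi_p$, the ``local perturbation decreases the sup'' step is not straightforward, since lowering $\varphi_{p,i}$ near a strict-inequality point alters $\mathcal{L}_j[\varphi_p]$ at \emph{all} $x$ through the four integral terms and can raise the sup elsewhere. The paper is built to avoid both problems: it first proves via Fredholm theory and the Touching Lemma that $\mathcal{L}-\lambda\mathcal{I}$ is a bijection for every $\lambda>\lambda_p$, then solves the resolvent equation $(\mathcal{L}-\lambda\mathcal{I})\varphi^{\delta}_{\lambda}=-(\delta,\delta)^T$; because this is an equality, $\varphi^{\delta}_{\lambda}=(\lambda\mathcal{I}-\mathcal{H})^{-1}[\mathcal{P}[\varphi^{\delta}_{\lambda}]+(\delta,\delta)^T]$ and the compactness of $\mathcal{P}$ does transfer to the family $\varphi^{\delta}_{\lambda}$, after which a dichotomy on $\|\varphi^{\delta}_{\lambda}\|_{L^1}$ and the bijectivity pin down a true eigenpair at $\lambda_p$. (You also float ``Krein--Rutman-type reasoning,'' which the paper explicitly sets out to avoid in favour of this elementary construction.)

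For (2), your adjoint-pairing argument is a legitimate alternative route, but it carries a hidden cost: you must first establish that $\mathcal{L}^*$ has a principal eigenfunction $\varphi^*_p\in X^{++}$ associated to the same value $\lambda_p$, which is an additional result to prove. The paper instead applies the Touching Lemma directly to a putative generalized eigenfunction: if $(\mathcal{L}-\lambda_p)[\tilde\phi]=-\varphi_p$, then (after a sign normalization and an SMP step) the Touching Lemma forces $(\mathcal{L}-\lambda_p)[\tilde\phi]=\mathbf{0}$, a contradiction, giving $N(\mathcal{L}-\lambda_p)^2=N(\mathcal{L}-\lambda_p)$ with the tools already in hand. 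Your route would work if filled in, but it is strictly more work than the paper's.
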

\begin{proof} (1) {\bf Step 1.} We now prove that $\lambda_p$ is an eigenvalue with an eigenfunction $\varphi\in X^{++}$ if and only if $\lambda_p>\max\{-b_1,-b_2\}$. Obviously, $\lambda_p\ge \max\{-b_1,-b_2\}$. If $(\lambda_p,\varphi)$ with $\varphi\in X^{++}$ is an eigenpair of $\mathcal{L}$, then for $i=1,2$, we have
\[a_{i1}\int_{l_1}^{l_2}J_{i1}(x-y)\varphi_i(y)\dy+a_{i2}\int_{l_1}^{l_2}J_{i2}(x-y)\varphi_i(y)\dy=(b_{i}+\lambda)\varphi_i ~ ~ {\rm in ~ }[l_1,l_2],\]
which clearly implies $\lambda_p>\max\{-b_1,-b_2\}$.

Suppose that $\lambda_p>\max\{-b_1,-b_2\}$. Next we show there exists a $\varphi_p\in X^{++}$ such that $\mathcal{L}[\varphi_p]=\lambda_p\varphi_p$.

{\bf Claim 1.} For any $\lambda>\lambda_p$, the operator $\mathcal{L}-\lambda\mathcal{I}$ is a bijection.

Due to $\lambda>\max\{-b_1,-b_2\}$, the operator $\mathcal{H}-\lambda\mathcal{I}$ is a bijection, and thus is a Fredholm operator with index zero. Moreover, in view of Arzel{\'a}-Ascoli Theorem, operator $\mathcal{P}$ is compact. Hence $\mathcal{L}-\lambda\mathcal{I}=\mathcal{P}+\mathcal{H}-\lambda\mathcal{I}$ is a Fredholm operator with the same index as $\mathcal{H}-\lambda\mathcal{I}$. So to prove Claim 1, it is sufficient to show that $\mathcal{L}-\lambda\mathcal{I}$ is one-to-one, i.e. if $\varphi\in X$ satisfies $(\mathcal{L}-\lambda\mathcal{I})[\varphi]={\bf0}$, then $\varphi={\bf0}$. Without loss of generality, assume on the contrary that $\varphi_1<0$ somewhere on $[l_1,l_2]$. According to the definition of $\lambda_p$, there exists a $\varphi_{\lambda}\in X^{++}$ such that
\[\lambda>\sup_{x\in[l_1,l_2],i=1,2}\frac{\mathcal{L}_i[\varphi_{\lambda}](x)}{(\varphi_{\lambda}(x))_i},\]
which implies
\bes\label{2.2}
-(\mathcal{L}-\lambda\mathcal{I})[\varphi_{\lambda}]\in X^{++}.
\ees
However, applying Touching Lemma (Lemma \ref{l2.2}) with $(\mathcal{L}-\lambda\mathcal{I},\varphi_{\lambda},\varphi)$ in place of $(\mathcal{L}, \varphi, \psi)$, respectively, we derive $(\mathcal{L}-\lambda\mathcal{I})[\varphi_{\lambda}]={\bf0}$, which contradicts \eqref{2.2}. Therefore, Claim 1 is proved.

By Claim 1, for any $\delta>0$ and $\lambda\in(\lambda_p,\lambda_p+1]$, there exists a unique $\varphi^{\delta}_{\lambda}\in X$ such that
\bes
\label{2.3}(\mathcal{L}-\lambda\mathcal{I}){\varphi^{\delta}_{\lambda}}=-(\delta,\delta) ~ ~ \forall x\in[l_1,l_2].
\ees

{\bf Claim 2.} $\varphi^{\delta}_{\lambda}\in X^{++}$.

We first show $\varphi^{\delta}_{\lambda}\in X^{+}$. Otherwise, arguing as above and using Touching Lemma with $(\mathcal{L},\varphi,\psi)$ replaced by $(\mathcal{L}-\lambda\mathcal{I},\varphi_{\lambda},\varphi^{\delta}_{\lambda})$, we have $(\mathcal{L}-\lambda\mathcal{I})[\varphi_{\lambda}]={\bf0}$, which contradicts \eqref{2.2}. Thus $\varphi^{\delta}_{\lambda}\in X^+$. Then by the Strong Maximum principle (Lemma \ref{l2.1}), Claim 2 is verified.

Based on the above discussions, there are two cases to consider:\\
{\bf Case 1.} There exists a $\delta\in(0,1)$ such that for any $\lambda\in(\lambda_p,\lambda_p+1]$, we have $\int_{l_1}^{l_2}[(\varphi^{\delta}_{\lambda})_1(x)+(\varphi^{\delta}_{\lambda})_2(x)]\dx<1$;\\
{\bf Case 2.} For any $\delta\in(0,1)$, there exists $\lambda_{\delta}\in(\lambda_p,\lambda_p+1]$ such that $\int_{l_1}^{l_2}[(\varphi^{\delta}_{\lambda})_1(x)+(\varphi^{\delta}_{\lambda})_2(x)]\dx\ge1$.

We now show that Case 1 can not happen. Since $\varphi^{\delta}_{\lambda}\in X^{++}$ and $\int_{l_1}^{l_2}[(\varphi^{\delta}_{\lambda})_1(x)+(\varphi^{\delta}_{\lambda})_2(x)]<1$, it is easy to verify that $\{\mathcal{P}[\varphi^{\delta}_{\lambda}]:\lambda\in(\lambda_p,\lambda_p+1]\}$ is uniformly bounded and equicontinuous on $[l_1,l_2]$. By Arzel{\'a}-Ascoli Theorem, there exist a subsequence of $\lambda$ decreasing to $\lambda_p$ and $\omega(x)\in X^+$ such that $\mathcal{P}[\varphi^{\delta}_{\lambda}]\to \omega$ in $X$ as $\lambda\searrow\lambda_p$. By \eqref{2.3}, we see
\[\varphi^{\delta}_{\lambda}=(\lambda\mathcal{I}-\mathcal{H})^{-1}[\mathcal{P}[\varphi^{\delta}_{\lambda}]+(\delta,\delta)^T].\]
Letting $\lambda\to\lambda_p$ yields
\[\varphi^{\delta}_{\lambda}\to(\lambda_p\mathcal{I}-\mathcal{H})^{-1}[\omega+(\delta,\delta)^T]=:\varphi^{\delta} ~ ~{\rm in ~ }X.\]
Thus
\[\varphi^{\delta}=(\lambda_p\mathcal{I}-\mathcal{H})^{-1}[\mathcal{P}[\varphi^{\delta}]+(\delta,\delta)^T] ~ ~ {\rm and ~ ~ }-(\mathcal{H}-\lambda_p\mathcal{I})[\varphi^{\delta}]\in X^{++}.\]
Recall that $\lambda_p>\max\{-b_1,-b_2\}$. So $\varphi^{\delta}\in X^{++}$. Then using the definition of $\lambda_p$, we have
\bess
\lambda_p\le \sup_{x\in[l_1,l_2],i=1,2}\frac{\mathcal{L}_i[\varphi^{\delta}](x)}{\varphi^{\delta}_i}=\lambda_p-\inf_{x\in[l_1,l_2],i=1,2}\frac{\delta}{\varphi^{\delta}_i(x)}<\lambda_p,
\eess
which indicates that Case 1 cannot happen. Thus Case 2 holds true.

Define
\bess
\phi^{\delta}_i=\frac{(\varphi^{\delta}_{\lambda_{\delta}}(x))_i}{\dd\int_{l_1}^{l_2}[(\varphi^{\delta}_{\lambda_{\delta}})_1(x)+(\varphi^{\delta}_{\lambda_{\delta}})_2(x)]\dx}
, ~ ~ c_{\delta}=\frac{-\delta}{\dd\int_{l_1}^{l_2}[(\varphi^{\delta}_{\lambda_{\delta}})_1(x)+(\varphi^{\delta}_{\lambda_{\delta}})_2(x)]\dx}.
\eess
Clearly, $\int_{l_1}^{l_2}[\phi^{\delta}_1(x)+\phi^{\delta}_2(x)]\dx=1$, $c_{\delta}\to0$ as $\delta\to0^+$ and $(\mathcal{L}-\lambda\mathcal{I})[\phi^{\delta}]=(c_{\delta},c_{\delta})^T$. In view of a compact consideration, by passing a subsequence if necessary, we have $\lambda_{p}\to\lambda_0\in[\lambda_p,\lambda_p+1]$ and $\mathcal{P}[\phi^{\delta}]$ converges in $X$. Since
\[\phi^{\delta}=(\lambda_{\delta}-\mathcal{H})^{-1}[\mathcal{P}[\phi^{\delta}]-(c_{\delta},c_{\delta})^T],\]
we know that $\phi^{\delta}$ converges to some $\phi^0\in X$ as $\delta\to0$. Note that $\lambda_0\ge\lambda_p>\max\{-b_1,-b_2\}$. Thus $\phi^0\in X^+$. Moreover, $\int_{l_1}^{l_2}[\phi^0_1(x)+\phi^0_2(x)]\dx=1$ and $\mathcal{L}[\phi^0]=\lambda_0\phi^0$. By the strong maximum principle (Lemma \ref{l2.1}), we have $\phi^0\in X^{++}$.

However, from Claim 1, $\mathcal{L}-\lambda\mathcal{I}$ is a bijection for any $\lambda>\lambda_p$. Hence $\lambda^0=\lambda_p$ and $\varphi_p=\phi^0$. Therefore, Step 1 is finished.

{\bf Step 2.} We prove that $\lambda_p>\max\{-b_1,-b_2\}$.

Since $\lambda_p\ge \max\{-b_1,-b_2\}$, we only need to show that $\lambda_p$ can not be equal to $\max\{-b_1,-b_2\}$. Otherwise, for any $\lambda>\lambda_p=\max\{-b_1,-b_2\}$, there exists a $\varphi_{\lambda}\in X^{++}$ such that
\bes\label{2.4}\lambda>\sup_{x\in[l_1,l_2],i=1,2}\frac{\mathcal{L}_i[\varphi_{\lambda}](x)}{(\varphi_{\lambda}(x))_i}.\ees
Without loss of generality, we assume that $\max\{-b_1,-b_2\}=-b_1\ge-b_2$. By virtue of {\bf (J)}, there exist small positive constants $\ep$ and $\sigma$ such that $J_{ij}(x)\ge\sigma$ if $|x|\le\ep$ with $i,j=1,2$. If $a_{11}>0$, using \eqref{2.4} we have
\bess
&&\int_{l_1}^{l_1+\ep}(\varphi_{\lambda})_1(x)\dx>\int_{l_1}^{l_1+\ep}\frac{a_{11}\int_{l_1}^{l_2}J_{11}(x-y)(\varphi_{\lambda})_1(y)\dy}{\lambda+b_1}\dx\\
&&\ge\int_{l_1}^{l_1+\ep}\int_{l_1}^{l_1+\ep}\frac{a_{11}J_{11}(x-y)(\varphi_{\lambda})_1(y)}{\lambda+b_1}\dy\dx
=\int_{l_1}^{l_1+\ep}(\varphi_{\lambda})_1(y)\big[\int_{l_1}^{l_1+\ep}\frac{a_{11}J_{11}(x-y)}{\lambda+b_1}\dx\big]\dy\\
&&\ge\frac{a_{11}\sigma}{\lambda+b_1}\int_{l_1}^{l_1+\ep}(\varphi_{\lambda})_1(y)\dy,
\eess
which implies $\frac{a_{11}\sigma}{\lambda+b_1}\le1$ for all $\lambda>-b_1$. Clearly, it is a contradiction.  If $a_{11}=0$, by $\mathcal{L}_1[\varphi]<\lambda(\varphi_{\lambda})_1$, we have
\[(\varphi_{\lambda})_1>\frac{a_{12}}{\lambda+b_1}\int_{l_1}^{l_2}J_{12}(x-y)(\varphi_{\lambda})_2(y)\dy {\rm ~ in ~ }[l_1,l_2],\]
which, combined with $\mathcal{L}_2[\varphi]<\lambda(\varphi_{\lambda})_2$, yields
\[(\varphi_{\lambda})_2>\frac{a_{12}a_{21}}{(\lambda+b_1)(\lambda+b_2)}\int_{l_1}^{l_2}\int_{l_1}^{l_2}J_{21}(x-z)J_{12}(z-y)(\varphi_{\lambda})_2(y)\dy {\rm d}z ~ {\rm ~ in ~ }[l_1,l_2].\]
Integrating the above inequality from $l_1$ to $l_1+\ep$ leads to
\bess
\int_{l_1}^{l_1+\ep}(\varphi_{\lambda})_2(x)\dx>\frac{a_{12}a_{21}}{(\lambda+b_1)(\lambda+b_2)}\int_{l_1}^{l_1+\ep}\int_{l_1}^{l_1+\ep}\int_{l_1}^{l_1+\ep}J_{21}(x-z)J_{12}(z-y)(\varphi_{\lambda})_2(y)\dy {\rm d}z\dx.
\eess
Thus we see
\[\int_{l_1}^{l_1+\ep}(\varphi_{\lambda})_2(x)\dx>\frac{a_{12}a_{21}\sigma^2\ep^2}{(\lambda+b_1)(\lambda+b_2)}\int_{l_1}^{l_1+\ep}(\varphi_{\lambda})_2(x)\dx,\]
which implies that
\[1>\frac{a_{12}a_{21}\sigma^2\ep^2}{(\lambda+b_1)(\lambda+b_2)} {\rm ~ for ~ all ~ }\lambda>-b_1\ge-b_2.\]
This is clearly a contradiction. So $\lambda_p>\max\{-b_1,-b_2\}$.
 Then Step 2 is complete. In a word, conclusion (1) is obtained.

(2) We now show that the algebraic multiplicity of $\lambda_p$ is one, i.e. dim $\bigcup_{k=1}^{\yy}N(\mathcal{L}-\lambda_p)^k=1$. In fact, we can prove a stronger conclusion, namely, $N(\mathcal{L}-\lambda_p)^k=N(\mathcal{L}-\lambda_p)$ for $k\ge2$ and dim $N(\mathcal{L}-\lambda_p)=1$. Let $\phi\in N(\mathcal{L}-\lambda_p)\setminus\{{\bf0}\}$. If one of the components of $\phi$ is negative somewhere on $[l_1,l_2]$, by Touching Lemma (Lemma \ref{l2.2}) with $(\mathcal{L}-\lambda_p\mathcal{I}, \varphi_p,\phi)$ in place of $(\mathcal{L},\varphi,\psi)$, we have $\phi$ is a negative constant multiple of $\varphi_p$. If $\phi\in X^+$, then similar to the above analysis, we can derive that $-\phi$ is a negative constant multiple of $\varphi_p$. Thus dim $N(\mathcal{L}-\lambda_p)=1$.

Let $\phi\in N(\mathcal{L}-\lambda_p)^2\setminus\{{\bf0}\}$. Then $(\mathcal{L}-\lambda_p)[\phi]=c\varphi_p$ for some $c\in\mathbb{R}$. If $c=0$, then $\phi\in N(\mathcal{L}-\lambda_p)$. If $c\neq0$, we define $\tilde{\phi}=-\phi/c$. Thus $(\mathcal{L}-\lambda_p)[\tilde{\phi}]=-\varphi_p$. If one of the components of $\phi$ is negative somewhere on $[l_1,l_2]$, using Touching Lemma again, we have $(\mathcal{L}-\lambda_p)[\tilde{\phi}]={\bf 0}$. If $\tilde{\phi}\in X^+$, from Strong Maximum Principle (Lemma \ref{l2.1}) we have $\tilde{\phi}\in X^{++}$. Then by virtue of Touching Lemma with $(\mathcal{L},\varphi,\psi)$ replaced by $(\mathcal{L}-\lambda_p,\tilde{\phi},-\varphi_p)$, we can obtain $(\mathcal{L}-\lambda_p)[\tilde{\phi}]={\bf 0}$. Hence $N(\mathcal{L}-\lambda_p)^2\subseteq N(\mathcal{L}-\lambda_p)$, which implies that $N(\mathcal{L}-\lambda_p)^{k+1}\subseteq N(\mathcal{L}-\lambda_p)^k$ for all $k\ge1$. Moreover, it is easy to see that $N(\mathcal{L}-\lambda_p)^{k+1}\supseteq N(\mathcal{L}-\lambda_p)^k$ for $k\ge1$. Therefore, $N(\mathcal{L}-\lambda_p)^{k}= N(\mathcal{L}-\lambda_p)$ for $k\ge1$, and further conclusion (2) is obtained.

(3) In view of Lemma \ref{l2.1} with $\mathcal{L}$ replaced by $\mathcal{L}-\lambda$, we have $\varphi\in X^{++}$. By Claim 1, we see $\lambda\le \lambda_p$. Thus $(\mathcal{L}-\lambda_p)[\varphi]\le{\bf 0}$. Then using Touching Lemma with $(\mathcal{L},\varphi,\psi)$ replaced by $(\mathcal{L}-\lambda_p,\varphi,-\varphi_p)$, we obtain $(\mathcal{L}-\lambda_p)[\varphi]={\bf 0}$. Thus conclusion (3) is derived.

(4) For convenience, we denote $\lambda_0=\sup_{\|\phi\|_2=1}\langle\mathcal{L}[\phi],\phi\rangle$.
Clearly, $\lambda_0$ is well defined. It suffices to show that $\lambda_0$ is an eigenvalue of $\mathcal{L}$ with a corresponding eigenfunction in $X^+\setminus\{{\bf0}\}$. To this end, we first prove $\lambda_0>\max\{-b_1,-b_2\}$. Without loss of generality, assume that $\max\{-b_1,-b_2\}=-b_1$. Let $((l_2-l_1)^{-1/2},0)$ be the testing function. Then we have
\bess
\lambda_0&=&\sup_{\|\phi\|_2=1}\langle\mathcal{L}[\phi],\phi\rangle\\
&\ge& a_{11}(l_2-l_1)^{-1/2}\int_{l_1}^{l_2}\int_{l_1}^{l_2}J_{11}(x-y)\dy\dx+a_{21}(l_2-l_1)^{-1/2}\int_{l_1}^{l_2}\int_{l_1}^{l_2}J_{21}(x-y)\dy\dx-b_1>b_1.
\eess
Thus $\lambda_0>\max\{-b_1,-b_2\}$ and matirx $\lambda_0I-H$ is invertible, which implies that $\lambda_0\mathcal{I}-\mathcal{H}$ has a bounded and linear inverse $(\lambda_0\mathcal{I}-\mathcal{H})^{-1}$.

By virtue of $a_{12}=a_{21}$, $J_{12}=J_{21}$ and the definition of $\lambda_0$, we see that $\langle\lambda_0\varphi-\mathcal{L}[\varphi],\psi\rangle$ is bilinear, symmetric and $\langle\lambda_0\varphi-\mathcal{L}[\varphi],\varphi\rangle\ge0$. So by Cauchy-Schwarz inequality, we have
 \bess
|\langle\lambda_0\varphi-\mathcal{L}[\varphi],\psi\rangle|
\le\langle\lambda_0\varphi-\mathcal{L}[\varphi],\varphi\rangle^{\frac12}\langle\lambda_0\psi-\mathcal{L}[\psi],\psi\rangle^{\frac12}\le \langle\lambda_0\varphi-\mathcal{L}[\varphi],\varphi\rangle^{\frac12}\|\lambda_0I-\mathcal{L}\|^{\frac{1}{2}}\|\psi\|_2,
\eess
which yields $\|\lambda_0\varphi-\mathcal{L}[\varphi]\|_2\le\langle\lambda_0\varphi-\mathcal{L}[\varphi],\varphi\rangle^{\frac12}\|\lambda_0I-\mathcal{L}\|^{\frac{1}{2}}$. Together with the definitions of $\lambda_0$ and $\mathcal{L}$, we derive that there exists a nonnegative sequence $\{\varphi^n\}$ with $\|\varphi^n\|_2=1$ such that
  \bes\label{2.5}
  \|\lambda_0\varphi^n-\mathcal{L}[\varphi^n]\|_2\to0 ~ ~{\rm  as ~}  n\to\yy.\ees

For convenience, let $\mathcal{T}[\varphi]=(\lambda_0\mathcal{I}-\mathcal{H})[\varphi]$. By Arzel{\`a}-Ascoli Theorem, $\mathcal{P}$ is compact and maps $E$ to $X$. Thus there exists a subsequence of $\{\varphi^n\}$, still denoted by itself, such that $\mathcal{P}[\varphi^n]\to \bar\varphi$ for some $\bar\varphi\in X$. Define $\mathcal{T}^{-1}[\bar\varphi]=\theta$ and $\theta\in X$. So $\dd\lim_{n\to\yy}\mathcal{T}^{-1}[\mathcal{P}[\varphi^n]]
=\mathcal{T}^{-1}[\bar\varphi]=\theta$ in $X$.  Notice that
 \[\mathcal{T}^{-1}[\mathcal{P}[\varphi^n]]-\varphi^n
=\mathcal{T}^{-1}[\mathcal{P}[\varphi^n]-\mathcal{T}[\varphi^n]]
 =\mathcal{T}[\mathcal{L}[\varphi^n]-\lambda_0\varphi^n].\]
By \eqref{2.5}, $\dd\lim_{n\to\yy}\varphi^n=\theta$ in $E$. Since $\varphi^n$ is nonnegative and $\theta\in X$, we have $\theta\in X^+$. Therefore, $\mathcal{T}^{-1}[\mathcal{P}[\theta]]=\theta$, i.e., $\mathcal{L}[\theta]=\lambda_0\theta$. Due to $\|\theta\|_2=1$, $\lambda_0$ is an eigenvalue of $\mathcal{L}$ with an corresponding eigenfunction $\theta\in X^+\setminus\{{\bf0}\} $. Then by conclusion (3), $\lambda_p=\lambda_0$. The proof is complete.
\end{proof}

Now we investigate the asymptotic behaviors of $\lambda_p$ on interval $[l_1,l_2]$ by using some elementray analysis instead of the variational characteristic.  Define
\[A=\begin{pmatrix}
  a_{11}-b_1 & a_{12} \\
   a_{21} & a_{22}-b_2
    \end{pmatrix}.\]
    Direct computations show there exists an eigenpair $(\lambda_A,\theta_A)$ with $\theta_A>0$ such that $(\lambda_AI-A)(\theta_A,1)^T=0$, where
    \bes\label{2.6}\lambda_A=\frac{a_{11}-b_1+a_{22}-b_2+\sqrt{(a_{11}-b_1-a_{22}+b_2)^2+4a_{12}a_{21}}}{2}, ~ ~ \theta_A=\frac{a_{12}}{\lambda_A-a_{11}+b_1}.\ees

 We need to introduce the following lemma since it is vital to our later arguments.
   \begin{lemma}\label{l2.3}Let $\lambda_p$ be the principal eigenvalue of \eqref{2.1}. Then the following statements are valid.
   \begin{enumerate}[$(1)$]
   \item If there exist $\phi=(\phi_1,\phi_2)^T\in X$ with $\phi_1,\phi_2\ge,\not\equiv0$ and $\lambda\in\mathbb{R}$ such that $\mathcal{L}[\phi]\le\lambda\phi$, then $\lambda_p\le\lambda$. Moreover, $\lambda_p=\lambda$ only if $\mathcal{L}[\phi]=\lambda\phi$.

   \item If there exist $\phi=(\phi_1,\phi_2)^T\in X^+\setminus\{{\bf0}\}$ and $\lambda\in\mathbb{R}$ such that $\mathcal{L}[\phi]\ge\lambda\phi$, then $\lambda_p\ge\lambda$. Moreover, $\lambda_p=\lambda$ only if $\mathcal{L}[\phi]=\lambda\phi$.
   \end{enumerate}
   \end{lemma}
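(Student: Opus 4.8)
The plan is to prove Lemma~\ref{l2.3} as a direct consequence of the machinery already assembled for $\lambda_p$, chiefly the Touching Lemma (Lemma~\ref{l2.2}), the Strong Maximum Principle (Lemma~\ref{l2.1}), and the characterization $\lambda_p=\inf\{\lambda: \mathcal{L}[\varphi]\le\lambda\varphi$ for some $\varphi\in X^{++}\}$, together with the fact (Proposition~\ref{p2.1}(1)) that $\lambda_p$ is attained with an eigenfunction $\varphi_p\in X^{++}$. The two parts are dual, so I would write part (1) in detail and then indicate that part (2) follows by the same argument applied to a transposed/adjoint problem.

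For part (1): suppose $\phi\in X$ with $\phi_1,\phi_2\ge 0$, $\phi\not\equiv{\bf 0}$, and $\mathcal{L}[\phi]\le\lambda\phi$, i.e. $(\mathcal{L}-\lambda\mathcal{I})[\phi]\le{\bf 0}$. First I would argue $\lambda_p\le\lambda$ by contradiction: if $\lambda<\lambda_p$, then since $\mathcal{L}-\lambda_p\mathcal{I}$ is a bijection for... — more carefully, the cleanest route is to recall $\lambda_p>\max\{-b_1,-b_2\}$ and use the definition of $\lambda_p$ as an infimum: if $\lambda<\lambda_p$ then there is no $\psi\in X^{++}$ with $\mathcal{L}[\psi]\le\lambda\psi$. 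But $\phi$ need not be in $X^{++}$ a priori. Here is where the Strong Maximum Principle enters: applying Lemma~\ref{l2.1} to the operator $\mathcal{L}-\lambda\mathcal{I}$ (which has the same off-diagonal positivity structure as $\mathcal{L}$, the only change being the diagonal shift), from $\phi\in X^+\setminus\{{\bf0}\}$ and $(\mathcal{L}-\lambda\mathcal{I})[\phi]\le{\bf0}$ we conclude $\phi\in X^{++}$. Then $\phi$ itself is an admissible test function in the infimum defining $\lambda_p$, giving $\lambda_p\le\lambda$ immediately. For the equality case, suppose $\lambda_p=\lambda$. Then $(\mathcal{L}-\lambda_p\mathcal{I})[\phi]\le{\bf0}$ with $\phi\in X^{++}$, and also $(\mathcal{L}-\lambda_p\mathcal{I})[-\varphi_p]={\bf0}\le{\bf0}$ with $-\varphi_p\in X$ not in $X^+$. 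Apply the Touching Lemma (Lemma~\ref{l2.2}) with $(\mathcal{L}-\lambda_p\mathcal{I},\phi,-\varphi_p)$ in place of $(\mathcal{L},\varphi,\psi)$: since $-\varphi_p\notin X^+$, the alternative forces $(\mathcal{L}-\lambda_p\mathcal{I})[\phi]={\bf0}$, i.e. $\mathcal{L}[\phi]=\lambda\phi$, as claimed. (This is exactly the style of argument already used in the proof of Proposition~\ref{p2.1}(3), so I would cross-reference it.)

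For part (2): now $\phi\in X^+\setminus\{{\bf0}\}$ with $\mathcal{L}[\phi]\ge\lambda\phi$. The natural approach is to pass to the adjoint. Note $\langle\mathcal{L}[\varphi],\psi\rangle=\langle\varphi,\mathcal{L}^*[\psi]\rangle$ where $\mathcal{L}^*$ is obtained by swapping $a_{12}J_{12}\leftrightarrow a_{21}J_{21}$ in the off-diagonal kernels (using that each $J_{ij}$ is even, so $\int J_{ij}(x-y)\varphi(y)\,dy$ is self-adjoint as an integral operator); in particular $\mathcal{L}^*$ has the same structural hypotheses, and one checks $\lambda_p(\mathcal{L}^*)=\lambda_p(\mathcal{L})$ — this holds because the principal eigenvalue equals $\lambda_p$ characterized via either the $\inf\sup$ formula or, when applicable, the variational one, and more robustly because $\lambda_p(\mathcal{L}^*)$ is an eigenvalue of $\mathcal{L}^*$ with positive eigenfunction $\varphi_p^*$, and then $\lambda_p(\mathcal{L})\langle\varphi_p,\varphi_p^*\rangle=\langle\mathcal{L}[\varphi_p],\varphi_p^*\rangle=\langle\varphi_p,\mathcal{L}^*[\varphi_p^*]\rangle=\lambda_p(\mathcal{L}^*)\langle\varphi_p,\varphi_p^*\rangle$ with $\langle\varphi_p,\varphi_p^*\rangle>0$. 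Given that, pair $\mathcal{L}[\phi]\ge\lambda\phi$ with $\varphi_p^*\in X^{++}$: $\lambda\langle\phi,\varphi_p^*\rangle\le\langle\mathcal{L}[\phi],\varphi_p^*\rangle=\langle\phi,\mathcal{L}^*[\varphi_p^*]\rangle=\lambda_p\langle\phi,\varphi_p^*\rangle$, and since $\langle\phi,\varphi_p^*\rangle>0$ we get $\lambda\le\lambda_p$. If $\lambda=\lambda_p$, then $\langle\mathcal{L}[\phi]-\lambda\phi,\varphi_p^*\rangle=0$ with the integrand $\ge0$ componentwise and $\varphi_p^*>0$, forcing $\mathcal{L}[\phi]=\lambda\phi$ pointwise. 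Alternatively, and perhaps cleaner for exposition, part (2) can be done directly mirroring part (1): from $\mathcal{L}[\phi]\ge\lambda\phi$ with $\phi\in X^+\setminus\{{\bf0}\}$ one shows $\lambda\le\lambda_p$ by testing against the infimum definition after arguing $\phi\in X^{++}$ via a version of the maximum principle for the reversed inequality (if some component of $\phi$ vanishes at an interior point while $\phi\not\equiv{\bf0}$, the inequality $\mathcal{L}_i[\phi]\ge\lambda\phi_i$ at that point gives a contradiction with the kernel positivity exactly as in Lemma~\ref{l2.1}), then comparing $\phi$ with $\varphi_p$ through the Touching Lemma for the equality case.

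The main obstacle I anticipate is the equality (``only if'') clauses, and in part (2) the need to either set up the adjoint operator cleanly (verifying $\lambda_p(\mathcal{L}^*)=\lambda_p(\mathcal{L})$ and $\varphi_p^*\in X^{++}$, none of which is stated verbatim earlier but all of which follow from Proposition~\ref{p2.1} applied to $\mathcal{L}^*$) or to re-run a Strong-Maximum-Principle-type argument for the reversed differential inequality. I would opt for the direct mirror-image route in part (2) to keep everything within the toolbox of Lemmas~\ref{l2.1}--\ref{l2.2}, relegating the adjoint identity to at most a remark. Everything else is a short deduction from results already proved.
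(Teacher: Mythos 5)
Your part (1) is fine and in fact a little cleaner than the paper's route: you upgrade $\phi$ to $X^{++}$ via the Strong Maximum Principle applied to $\mathcal{L}-\lambda\mathcal{I}$ (the proof of Lemma~\ref{l2.1} only uses the off-diagonal kernel positivity, not the sign of the diagonal entries, so the shift is harmless), then read $\lambda_p\le\lambda$ directly off the infimum definition, and handle the equality case with the Touching Lemma exactly as in the proof of Proposition~\ref{p2.1}(3). The paper instead runs a full contradiction argument through the Touching Lemma for the inequality too; both are sound.

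Part (2) is where your preferred plan fails. You propose to ``mirror'' part (1) by first arguing $\phi\in X^{++}$ from $\mathcal{L}[\phi]\ge\lambda\phi$ via ``a version of the maximum principle for the reversed inequality,'' and then ``testing against the infimum definition.'' Both steps have genuine gaps. First, there is no strong maximum principle for subsolutions here: at a point $x_1$ where $\phi_1(x_1)=0$ and $\phi_1>0$ nearby, the inequality $\mathcal{L}_1[\phi](x_1)\ge\lambda\phi_1(x_1)=0$ becomes
$a_{11}\int J_{11}(x_1-y)\phi_1(y)\,{\rm d}y+a_{12}\int J_{12}(x_1-y)\phi_2(y)\,{\rm d}y\ge 0$,
which is automatic and produces no contradiction whatsoever. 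Concretely, take $a_{22}=0$, $\phi=(0,\phi_2)$ with $\phi_2\ge,\not\equiv0$, and $\lambda=-b_2$: then $\mathcal{L}[\phi]\ge\lambda\phi$ holds but $\phi\notin X^{++}$. Second, even if $\phi$ were in $X^{++}$, the definition $\lambda_p=\inf\{\lambda:\mathcal{L}[\varphi]\le\lambda\varphi\text{ for some }\varphi\in X^{++}\}$ only speaks about supersolutions; a subsolution $\mathcal{L}[\phi]\ge\lambda\phi$ does not place $\lambda$ on the correct side of the infimum without a further comparison argument, which is precisely what you would be trying to prove. Your fallback — the adjoint pairing $\lambda\langle\phi,\varphi_p^*\rangle\le\langle\phi,\mathcal{L}^*[\varphi_p^*]\rangle=\lambda_p\langle\phi,\varphi_p^*\rangle$ — is correct and self-contained (the adjoint swaps $(a_{12},J_{12})\leftrightarrow(a_{21},J_{21})$, so Proposition~\ref{p2.1} applies to $\mathcal{L}^*$ as well, and $\lambda_p(\mathcal{L}^*)=\lambda_p(\mathcal{L})$ follows from the pairing identity with $\langle\varphi_p,\varphi_p^*\rangle>0$); you should promote it from a remark to the actual proof. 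For comparison, the paper avoids both difficulties by never upgrading $\phi$: assuming $\lambda_p<\lambda$, it notes $(\mathcal{L}-\lambda\mathcal{I})[\varphi_p]<{\bf0}$ and $(\mathcal{L}-\lambda\mathcal{I})[\varphi_p-M\phi]\le{\bf0}$ with $\varphi_p-M\phi$ negative somewhere for $M$ large, and applies the Touching Lemma with $\varphi_p\in X^{++}$ as the strict solution; this needs only $\phi\in X^+\setminus\{{\bf0}\}$ and is the cleaner fix if you want to stay within Lemmas~\ref{l2.1}--\ref{l2.2}.
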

   \begin{proof}
    (1) By Strong Maximum Principle (Lemma \ref{l2.1}) with $\mathcal{L}$ replaced by $\mathcal{L}-\lambda\mathcal{I}$, we have $\phi\in X^{++}$. Assume on the contrary that $\lambda_p>\lambda$. Then we see $(\mathcal{L}-\lambda)[M\varphi_p]>0$ for any $M>0$. Choose $M$ large enough such that $\phi-M\varphi_p$ is negative somewhere on $[l_1,l_2]$. Denote $\phi-M\varphi_p$ by $\psi$. Then we have $(\mathcal{L}-\lambda\mathcal{I})[\psi]<0$. Using Touching Lemma (Lemma \ref{l2.2}) with $(\mathcal{L},\varphi,\psi)$ replaced by $(\mathcal{L}-\lambda\mathcal{I},\phi,\psi)$, respectively, we derive that $\psi$ is a negative multiple of $\phi$, and $(\mathcal{L}-\lambda\mathcal{I})[\psi]=(\mathcal{L}-\lambda\mathcal{I})[\phi]={\bf0}$, which clearly implies $\mathcal{L}[\varphi_p]=\lambda\varphi_p$. By Proposition \ref{p2.1}, we have $\lambda=\lambda_p$. This contradicts $\lambda_p>\lambda$. Thus assertion (1) is obtained.

   (2) Arguing indirectly, we have $(\mathcal{L}-\lambda\mathcal{I})[\varphi_p]<{\bf0}$. In view of the assumption in (2), we see $(\mathcal{L}-\lambda\mathcal{I})[M\phi]\ge{\bf0}$ for any $M>0$. Let $M$ be sufficiently large such that $\tilde\psi:=\varphi_p-M\phi$ is negative somewhere on $[l_1,l_2]$. Moreover, since $(\mathcal{L}-\lambda\mathcal{I})[\tilde\psi]<{\bf0}$, by Touching Lemma with $(\mathcal{L}-\lambda\mathcal{I},\varphi_p,\tilde\psi)$ in place of $(\mathcal{L},\varphi,\psi)$, respectively, we have $\tilde{\psi}$ is a negative constant multiple of $\varphi_p$, and $(\mathcal{L}-\lambda\mathcal{I})[\tilde\psi]=(\mathcal{L}-\lambda\mathcal{I})[\varphi_p]={\bf0}$, which indicates $\lambda=\lambda_p$. This is a contradiction. So conclusion (2) is proved. The proof is finished.
     \end{proof}

Now we are in the position to show the asymptotic behaviors of $\lambda_p$ about interval $[l_1,l_2]$. Since it is clear that $\lambda_p$ relies only on the length $l_2-l_1$ of interval $[l_2,l_1]$, we consider $\lambda_p$ on $[-l,l]$ and rewrite it as $\lambda_p(l)$.

\begin{proposition}\label{p2.2} Let $\lambda_p(l)$ be the principal eigenvalue of \eqref{2.1}. Then the following results hold.
\begin{enumerate}[$(1)$]
\item $\lambda_p(l)$ is strictly increasing and continuous with respect to $l>0$.

\item $\lim_{l\to\yy}\lambda_{p}(l)=\lambda_A$,  where $\lambda_A$ is given by \eqref{2.6}.

\item $\lim_{l\to0}\lambda_p(l)=\max\{-b_1,-b_2\}$.
\end{enumerate}
\end{proposition}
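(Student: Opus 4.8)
The plan is to dispatch the three assertions one at a time, using throughout only the comparison principle of Lemma~\ref{l2.3}, the characterization of $\lambda_p$ from Proposition~\ref{p2.1}, and the compactness machinery ($\mathcal{P}$ compact, $\lambda\mathcal{I}-\mathcal{H}$ invertible away from $\max\{-b_1,-b_2\}$) already set up in the proof of Proposition~\ref{p2.1}.

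For statement (1), monotonicity: given $0<l<l'$, let $\varphi'\in X^{++}$ be the principal eigenfunction of \eqref{2.1} on $[-l',l']$ and let $\phi:=\varphi'|_{[-l,l]}$. Because $\varphi'\ge{\bf0}$ and $[-l,l]\subset[-l',l']$, discarding the integrals over $[-l',l']\setminus[-l,l]$ only decreases each $\mathcal L_i$, so $\phi\in X^{++}$ satisfies $\mathcal L_l[\phi]\le\lambda_p(l')\phi$ on $[-l,l]$, and Lemma~\ref{l2.3}(1) yields $\lambda_p(l)\le\lambda_p(l')$. If equality held, Lemma~\ref{l2.3}(1) would force $\mathcal L_l[\phi]=\lambda_p(l')\phi$, hence $\int_{[-l',l']\setminus[-l,l]}J_{ij}(x-y)\varphi'_j(y)\dy=0$ for all $x\in[-l,l]$ and all $i,j$; taking $x$ close to $l$ and $y$ slightly above $l$ contradicts $J_{ij}(0)>0$ and $\varphi'_j>0$ from condition {\bf(J)}, so the inequality is strict. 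For continuity at a fixed $l>0$, I would prove left- and right-continuity separately: for a monotone sequence $l_n\to l$, monotonicity gives $\lambda_p(l_n)\to\lambda_0$ with $\lambda_0\ge\lambda_p(l)$ (from the left) or $\le\lambda_p(l)$ (from the right); normalizing the eigenfunctions by $\int_{-l_n}^{l_n}(\varphi_{p,1}+\varphi_{p,2})=1$ and either restricting them to $[-l,l]$ or extending them by zero, the identity $\varphi_p(l_n)=(\lambda_p(l_n)\mathcal I-\mathcal H)^{-1}\mathcal P[\varphi_p(l_n)]$, the compactness of $\mathcal P$ and the uniform boundedness of $(\lambda\mathcal I-\mathcal H)^{-1}$ near $\lambda_0$ give a subsequential limit $\varphi^\ast\in X^+\setminus\{{\bf0}\}$; the discrepancy caused by replacing $[-l_n,l_n]$ with $[-l,l]$ is $O(|l_n-l|)\to0$, so $\mathcal L_l[\varphi^\ast]=\lambda_0\varphi^\ast$, and Proposition~\ref{p2.1}(3) forces $\lambda_0=\lambda_p(l)$.

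For statement (2), the upper bound is immediate: the constant vector $(\theta_A,1)\in X^{++}$ obeys $\mathcal L_l[(\theta_A,1)]\le\lambda_A(\theta_A,1)$ since $\int_{-l}^{l}J_{ij}(x-y)\dy\le1$ and $(\lambda_AI-A)(\theta_A,1)^{T}={\bf0}$; hence $\lambda_p(l)\le\lambda_A$ for every $l$, and by (1) the limit $\lambda_\infty:=\lim_{l\to\infty}\lambda_p(l)\le\lambda_A$ exists. The reverse inequality $\lambda_\infty\ge\lambda_A$ is where the real work lies. The approach I would take is to fix $\delta>0$, choose $M$ with $\int_{|z|\le M}J_{ij}(z)\,{\rm d}z\ge1-\delta$ for all $i,j$, and for $l$ large construct a subsolution $\underline\varphi=(\theta_A,1)\psi_l\in X^{++}$ on $[-l,l]$ with $\psi_l\equiv1$ on a large central region and, on a layer adjoining $\pm l$, $\psi_l$ positive and decaying convexly — so that the evenness of $J_{ij}$ gives $\int_{x-M}^{x+M}J_{ij}(x-y)\psi_l(y)\dy\ge(1-\delta)\psi_l(x)$ in the interior of the layer — and steepening near the very endpoints, so that the inescapable loss of kernel mass across $\pm l$ is absorbed by the smallness of $\underline\varphi$ there. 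A direct (if delicate) computation should then give $\mathcal L_l[\underline\varphi]\ge(\lambda_A-C\delta)\underline\varphi$ on $[-l,l]$, whereupon Lemma~\ref{l2.3}(2) yields $\lambda_p(l)\ge\lambda_A-C\delta$; letting $l\to\infty$ and then $\delta\to0$ finishes (2). (Alternatively: if $\lambda_\infty<\lambda_A$, a normalized sequence of principal eigenfunctions, suitably translated, subconverges locally uniformly to a bounded nonnegative nontrivial solution of the limiting equation on $\mathbb R$ or a half-line; evaluating near the infimum of each component yields $(\lambda_\infty I-A){\bf s}\ge{\bf0}$ for some ${\bf0}\neq{\bf s}\ge{\bf0}$, which forces $\lambda_\infty\ge\lambda_A$ by a Perron--Frobenius argument — provided one first checks that the limit profile is bounded below.)

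For statement (3), by (1) and Step~2 of the proof of Proposition~\ref{p2.1} we have $\lambda_p(l)>\max\{-b_1,-b_2\}$ and $\lambda_p(l)\downarrow\lambda_0\ge\max\{-b_1,-b_2\}$ as $l\to0^+$. For the matching upper bound, normalize the principal eigenfunction on $[-l,l]$ so that $\max\{\|\varphi_{p,1}\|_\infty,\|\varphi_{p,2}\|_\infty\}=1$, say attained at $x_l$ in component $j_l$; evaluating the $j_l$-th equation of \eqref{2.1} at $x_l$ and bounding each integral by $2l\,\|J_{j_lk}\|_\infty$ gives $\lambda_p(l)+b_{j_l}\le Cl$, hence $\lambda_p(l)\le\max\{-b_1,-b_2\}+Cl$; sending $l\to0^+$ gives $\lambda_0\le\max\{-b_1,-b_2\}$, so $\lambda_0=\max\{-b_1,-b_2\}$. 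The main obstacle of the whole proposition is the lower bound in (2): keeping the pointwise ratio $\mathcal L_i[\underline\varphi]/\underline\varphi_i$ above $\lambda_A-C\delta$ right up to the free endpoints $\pm l$, where roughly half the kernel mass leaves the interval, requires a careful design of the boundary layer and carries essentially all the difficulty.
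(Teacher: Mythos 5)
Your treatment of (1), (3), and the upper bound in (2) is sound and largely parallels the paper. In (1), the restriction argument for monotonicity is exactly the paper's; your compactness route to continuity (normalize, extract a subsequential limit using compactness of $\mathcal{P}$ and invertibility of $\lambda\mathcal{I}-\mathcal{H}$, then invoke Proposition \ref{p2.1}(3)) differs from the paper's Touching-Lemma contradiction but is a perfectly good alternative. In (3), you evaluate the equation at a maximizer of the normalized eigenfunction, whereas the paper tests with $(1,1)^T$ and applies Lemma \ref{l2.3}; both give the same $O(l)$ upper bound $\lambda_p(l)\le\max\{-b_1,-b_2\}+Cl$. The upper bound $\lambda_p(l)\le\lambda_A$ via the constant vector $(\theta_A,1)^T$ is also the paper's.

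The genuine gap is exactly where you flag it: the lower bound $\lim_{l\to\infty}\lambda_p(l)\ge\lambda_A$. Your primary route (a plateau function $\psi_l$ equal to $1$ in the interior with a ``carefully designed'' boundary layer that you never construct) and your alternative route (local compactness after translation, contingent on the limit profile being bounded below) are both left as sketches, with essential verifications deferred. The paper closes this gap with a very concrete and clean choice: it takes the tent function $\xi(x)=l-|x|$ and the subsolution $\underline\varphi=(\theta_A\xi,\xi)^T$, and invokes the cited estimate \eqref{2.8} from \cite[Lemma 5.2]{DN2}, namely that for every $\varepsilon>0$ there is $L_\varepsilon$ with
\[
\int_{-l}^{l}J_{ij}(x-y)\xi(y)\,\dy\ge(1-\varepsilon)\xi(x)\quad\text{for all }x\in[-l,l],\ l>L_\varepsilon.
\]
With this in hand, the computation $\mathcal{L}_i[\underline\varphi]\ge(\lambda_A-C\varepsilon)\underline\varphi_i$ is a one-line estimate (no delicate boundary-layer balancing needed, because $\xi$ itself vanishes at $\pm l$ at precisely the right rate), and Lemma \ref{l2.3}(2) gives $\lambda_p(l)\ge\lambda_A-C\varepsilon$. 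If you wish to fill your gap without the external citation you would essentially have to reprove that lemma for the tent function; the ``steepening near the endpoints'' you describe is an attempt to rediscover it, but as written the bound $\mathcal{L}_l[\underline\varphi]\ge(\lambda_A-C\delta)\underline\varphi$ is asserted rather than proved, and the hard part of Proposition \ref{p2.2}(2) remains open in your proposal.
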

\begin{proof}
{\rm (1)} Let $l_2>l_1>0$. Denote by $\varphi=(\varphi_1,\varphi_2)$ the positive eigenfunction of $\lambda_p(l_2)$. Simple computations yield
\bess\left\{\begin{aligned}
&a_{11}\int_{-l_1}^{l_1}J_{11}(x-y)\varphi_1(y)\dy+a_{12}\int_{-l_1}^{l_1}J_{12}(x-y)\varphi_2(y)\dy-b_{1}\varphi_1\le\lambda_p(l_2)\varphi_1, & &x\in[-l_1,l_1],\\[1mm]
&a_{21}\int_{-l_1}^{l_1}J_{21}(x-y)\varphi_1(y)\dy+a_{22}\int_{-l_1}^{l_1}J_{22}(x-y)\varphi_2(y)\dy-b_{2}\varphi_2\le\lambda_p(l_2)\varphi_2, & &x\in[-l_1,l_1],
 \end{aligned}\right.
 \eess
which holds strictly at $l_1$. By Lemma \ref{l2.3}, we have $\lambda_p(l_1)<\lambda_p(l_2)$. The monotonicity is obtained.

Next we prove $\lambda_p(l)$ is continuous for $l>0$. For any $l_0>0$ and $l_n\searrow l_0$, denote by $\varphi^0$ and $\varphi^n$ the positive eigenfunctions of $\lambda_p(l_0)$ and $\lambda_p(l_n)$, respectively. For clarity, we denote the operator $\mathcal{L}$ defined in $[-l_n,l_n]$ and $[-l_0,l_0]$ by $\mathcal{L}^n$ and $\mathcal{L}^0$, respectively. By monotonicity, $\lambda_p(l_n)$ is decreasing in $n$ and $\lambda_p(l_n)>\lambda_p(l_0)$. Thus $\lambda_{\yy}:=\lim_{n\to\yy}\lambda_p(l_n)\ge\lambda_p(l_0)$. If $\lambda_{\yy}>\lambda(l_0)$, then we have $(\mathcal{L}^0-\lambda_{\yy}\mathcal{I})[\varphi^0]<0$. By continuity, there exists $N>0$ such that $(\mathcal{L}^n-\lambda_{\yy}\mathcal{I})[\tilde{\varphi}^0]<0$ for all $n\ge N$, where $\tilde\varphi^0=\varphi^0$ for $|x|\le l_0$ and $\tilde{\varphi}^0=\varphi^0(l_0)$ for $|x|\in[l_0,l_n]$. Moveover, due to $\lambda_p(l_n)>\lambda_{\yy}$, we have $(\mathcal{L}^n-\lambda_{\yy}\mathcal{I})[\varphi^n]>0$. Define $\psi=\tilde\varphi^0-M\varphi^n$ with $M$ large enough such that $\psi$ is negative somewhere on $[-l_n,l_n]$. Clearly, $(\mathcal{L}^n-\lambda_{\yy}\mathcal{I})[\psi]<0$.
Then using Touching Lemma (Lemma \ref{l2.2}) with $(\mathcal{L},\varphi,\psi)$ replaced by $(\mathcal{L}^n-\lambda_{\yy}\mathcal{I},\tilde{\varphi}^0,\psi)$, respectively, we have $(\mathcal{L}^n-\lambda_{\yy}\mathcal{I})[\tilde{\varphi}^0]=0$ which implies $\lambda_{\yy}=\lambda_p(l_n)$ for $n\ge N$. This contradicts $\lambda_p(l_n)>\lambda_{\yy}$ for $n\ge1$. Thus $\lambda_{\yy}=\lambda_p(l_0)$.

For any $l_0>0$ and $l_n\nearrow l_0$, arguing as above we also can show $\lim_{n\to\yy}\lambda_p(l_n)=\lambda_p(l_0)$. Therefore, the continuity is obtained.

{\rm (2)} Recall that $\lambda_A$ and $\theta_A$ are given by \eqref{2.6}. Define $\bar{\varphi}=(\theta_A,1)^T$. Then we claim that $\mathcal{L}[\bar\varphi]\le \lambda_A\bar{\varphi}$ for all $l>0$ which, combined with Lemma \ref{l2.3}, yields
\bes\label{2.7}\lambda_p(l)\le\lambda_A.
\ees

Simple calculations leads to
\bess
&&a_{11}\int_{-l}^{l}J_{11}(x-y)\theta_A\dy+a_{12}\int_{-l}^{l}J_{12}(x-y)\dy-b_1\theta_A\le a_{11}\theta_A+a_{12}-b_1\theta_A=\lambda_A\theta_A,\\
&&a_{21}\int_{-l}^{l}J_{21}(x-y)\theta_A\dy+a_{22}\int_{-l}^{l}J_{22}(x-y)\dy-b_2\le a_{21}\theta_A+a_{22}-b_2=\lambda_A.
\eess
Thus our claim holds and \eqref{2.7} is obtained.

Define $\underline{\varphi}=(\underline{\varphi}_1(x),\underline\varphi_2(x))^T$ with
\[\underline{\varphi}_1(x)=\theta_A\xi(x), ~ ~ \underline{\varphi}_2(x)=\xi(x) ~ ~ {\rm and ~ ~ }\xi(x)=l-|x|.\]
From \cite[Lemma 5.2]{DN2}, there exists a $L_{\ep}>0$ such that when $l>L_{\ep}$, we have
\bes\label{2.8}
\int_{-l}^{l}J_{ij}(x-y)\xi(y)\dy\ge(1-\ep)\xi(x) ~ ~ {\rm for ~ } i,j=1,2, ~ x\in[-l,l].\ees
A straightforward calculation yields
\bess
&&a_{11}\int_{-l}^{l}J_{11}(x-y)\underline{\varphi}_1(y)\dy+a_{12}\int_{-l}^{l}J_{12}(x-y)\underline{\varphi}_2(y)\dy-b_1\underline{\varphi}_1\\
&&=a_{11}\int_{-l}^{l}J_{11}(x-y)\theta_A\xi(y)\dy+a_{12}\int_{-l}^{l}J_{12}(x-y)\xi(y)\dy-b_1\theta_A\xi\\
&&\ge a_{11}\theta_A(1-\ep)\xi+a_{12}(1-\ep)\xi-b_1\theta_A\xi\ge(\lambda_A-a_{11}\ep-\frac{a_{12}\ep}{\theta_A})\underline{\varphi}_1(x).
\eess
Similarly, we have
\[a_{21}\int_{-l}^{l}J_{21}(x-y)\underline\varphi_1(y)\dy+a_{22}\int_{-l}^{l}J_{22}(x-y)\underline\varphi_2(y)\dy-b_2\underline\varphi_2\ge(\lambda_A-a_{21}\theta_A\ep-a_{22}\ep)\underline\varphi_2(x).\]
By Lemma \ref{l2.3}, we obtain $\liminf_{l\to\yy}\lambda_p(l)\ge\lambda_A$, which, combined with \eqref{2.7}, completes the proof of assertion (2).

{\rm (3)} Clearly, $\lambda_p(l)>\max\{-b_1,-b_2\}$. Let $\underline{\psi}=(1,1)^T$. Direct computations show
\bess
&&a_{11}\int_{-l}^{l}J_{11}(x-y)\dy+a_{12}\int_{-l}^{l}J_{12}(x-y)\dy-b_1\le a_{11}\int_{-2l}^{2l}J_{11}(y)\dy+a_{12}\int_{-2l}^{2l}J_{12}(y)\dy-b_1\\
&&\le a_{11}\int_{-2l}^{2l}J_{11}(y)\dy+a_{12}\int_{-2l}^{2l}J_{12}(y)\dy+\max\{-b_1,-b_2\} ~ ~ {\rm in ~ }[-l,l];\\
&&a_{21}\int_{-l}^{l}J_{21}(x-y)\dy+a_{22}\int_{-l}^{l}J_{22}(x-y)\dy-b_2\\
&&\le a_{21}\int_{-2l}^{2l}J_{21}(y)\dy+a_{22}\int_{-2l}^{2l}J_{22}(y)\dy+\max\{-b_1,-b_2\}~ ~ {\rm in ~ }[-l,l].
\eess
By Lemma \ref{l2.3} again, we have
\[\lambda_p(l)\le\int_{-2l}^{2l}\bigg(\sum_{i,j=1}^{2}a_{ij}J_{ij}(y)\bigg)\dy+\max\{-b_1,-b_2\},\]
which implies $\limsup_{l\to0}\lambda_p(l)\le \max\{-b_1,-b_2\}$. Thus (3) is proved. The proof is finished.
\end{proof}

For later use, we now concretize principal eigenvalue problem \eqref{2.1} as follows.
\bes\left\{\begin{aligned}\label{2.9}
&d_1\int_{l_1}^{l_2}J_{11}(x-y)\varphi_1(y)\dy+c\int_{l_1}^{l_2}J_{12}(x-y)\varphi_2(y)\dy-d_1\varphi_1-a\varphi_1=\lambda\varphi_1, & &x\in[l_1,l_2],\\[1mm]
&G'(0)\int_{l_1}^{l_2}J_{21}(x-y)\varphi_1(y)\dy+d_2\int_{l_1}^{l_2}J_{22}(x-y)\varphi_2(y)\dy-d_{2}\varphi_2-b\varphi_2=\lambda\varphi_2, & &x\in[l_1,l_2].
 \end{aligned}\right.
 \ees
Then we investigate the asymptotic behaviors of $\lambda_p$ about diffusion coefficients $d_1$ and $d_2$. So we rewrite $\lambda_p$ as a binary function $\lambda_p(d_1,d_2)$ which, by Proposition \ref{p2.1}, is well defined on $[0,\yy)\times[0,\yy)$.

\begin{proposition}\label{p2.3} Let $\lambda_p(d_1,d_2)$ be given as above. Then the following statements are valid.
\begin{enumerate}[$(1)$]
  \item $\lambda_p(d_1,d_2)$ is continuous with respect to $(d_1,d_2)\in [0,\yy)\times[0,\yy)$.

  \item If $c=G'(0)$ and $J_{12}=J_{21}$, then $\lambda_p(d_1,d_2)$ is strictly decreasing in each variable $d_1$ and $d_2$.

  \item If we fix $d_2>0$, then $\lambda_p(d_1,d_2)\to\kappa_1$ as $d_1\to\yy$ where $\kappa_1$ is the principal eigenvalue of
  \[d_2\int_{l_1}^{l_2}J_{22}(x-y)\phi(y)\dy-d_2\phi-b\phi=\kappa\phi, ~ ~ x\in[l_1,l_2].\]
  If we fix $d_2=0$, then $\lambda_p(d_1,d_2)\to-b$ as $d_1\to\yy$.

  \item If we fix $d_1>0$, then $\lambda_p(d_1,d_2)\to\kappa_2$ as $d_2\to\yy$ where $\kappa_2$ is the principal eigenvalue of
  \[d_1\int_{l_1}^{l_2}J_{11}(x-y)\phi(y)\dy-d_1\phi-a\phi=\kappa\phi, ~ ~ x\in[l_1,l_2].\]
  If we fix $d_1=0$, $\lambda_p(d_1,d_2)\to-a$ as $d_2\to\yy$.

  \item $\lambda_p(d_1,d_2)\to-\yy$ as $(d_1,d_2)\to(\yy,\yy)$.
\end{enumerate}
\end{proposition}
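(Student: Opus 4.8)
\emph{Setup and plan.} All five parts reduce to the comparison Lemma~\ref{l2.3}, the characterizations of $\lambda_p$ from Proposition~\ref{p2.1}, and two preliminary observations about the specialized problem~\eqref{2.9}. First, in~\eqref{2.9} one has $a_{11}-b_1=d_1-(d_1+a)=-a$, $a_{22}-b_2=d_2-(d_2+b)=-b$, $a_{12}=c$, $a_{21}=G'(0)$, so the number $\lambda_A$ of~\eqref{2.6} equals $\tfrac12\big(-a-b+\sqrt{(a-b)^2+4cG'(0)}\big)$, which is \emph{independent of $(d_1,d_2)$}; hence Proposition~\ref{p2.2}(2) yields the uniform two-sided bound $\max\{-(d_1+a),-(d_2+b)\}\le\lambda_p(d_1,d_2)\le\lambda_A$. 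Second, the scalar convolution operator $\mathcal{N}_{ii}[\phi](x):=\int_{l_1}^{l_2}J_{ii}(x-y)\phi(y)\dy$ on the bounded interval has spectral radius $r_{ii}<1$: integrating a positive Krein--Rutman eigenfunction relation $\mathcal{N}_{ii}[\psi]=r_{ii}\psi$ over $[l_1,l_2]$ and using $\int_{l_1}^{l_2}J_{ii}(x-y)\dx<1$ for $y$ near the endpoints (a consequence of {\bf (J)} and evenness of $J_{ii}$) forces $r_{ii}<1$. Consequently $\eta_i:=(\mathcal{I}-\mathcal{N}_{ii})^{-1}[{\bf 1}]=\sum_{k\ge0}\mathcal{N}_{ii}^{k}[{\bf 1}]\ge{\bf 1}$ is a positive continuous function with $\eta_i-\mathcal{N}_{ii}[\eta_i]={\bf 1}$, hence $\mathcal{N}_{ii}[\eta_i]\le\theta_i\eta_i$ where $\theta_i:=1-\|\eta_i\|_\infty^{-1}<1$.

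\emph{Parts (1) and (2).} For (1), fix $(d_1^0,d_2^0)$ with principal eigenfunction $\varphi_p^0\in X^{++}$; since $\varphi_p^0$ is bounded below by a positive constant, $\mathcal{L}_{d_1^0,d_2^0}[\varphi_p^0]=\lambda_p(d_1^0,d_2^0)\varphi_p^0$ satisfies, for any $\ep>0$, $(\lambda_p^0-\ep)\varphi_p^0<\mathcal{L}_{d_1^0,d_2^0}[\varphi_p^0]<(\lambda_p^0+\ep)\varphi_p^0$ pointwise with a positive margin. As $\mathcal{L}_{d_1,d_2}$ depends (Lipschitz-)continuously in operator norm on $(d_1,d_2)$ through the single terms $d_1(\mathcal{N}_{11}-\mathcal{I})$ and $d_2(\mathcal{N}_{22}-\mathcal{I})$, both inequalities persist for $(d_1,d_2)$ near $(d_1^0,d_2^0)$, and Lemma~\ref{l2.3} then gives $\lambda_p^0-\ep\le\lambda_p(d_1,d_2)\le\lambda_p^0+\ep$, i.e.\ continuity. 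For (2), when $c=G'(0)$ and $J_{12}=J_{21}$ Proposition~\ref{p2.1}(4) gives $\lambda_p(d_1,d_2)=\sup_{\|\varphi\|_2=1}\langle\mathcal{L}_{d_1,d_2}[\varphi],\varphi\rangle$; since $\partial_{d_1}\mathcal{L}_{d_1,d_2}[\varphi]=(\mathcal{N}_{11}[\varphi_1]-\varphi_1,0)^T$, the evenness of $J_{11}$ and a symmetrization give
\[\langle\partial_{d_1}\mathcal{L}_{d_1,d_2}[\varphi],\varphi\rangle=-\int_{l_1}^{l_2}\varphi_1(y)^2\Big(1-\int_{l_1}^{l_2}J_{11}(x-y)\dx\Big)\dy-\tfrac12\iint J_{11}(x-y)\big(\varphi_1(x)-\varphi_1(y)\big)^2\dx\dy\le0,\]
which is strictly negative when $\varphi_1>0$. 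Hence for $d_1'>d_1$, with $\varphi_p'\in X^{++}$ the normalized eigenfunction at $(d_1',d_2)$,
\[\lambda_p(d_1',d_2)=\langle\mathcal{L}_{d_1,d_2}[\varphi_p'],\varphi_p'\rangle+(d_1'-d_1)\langle\partial_{d_1}\mathcal{L}[\varphi_p'],\varphi_p'\rangle<\langle\mathcal{L}_{d_1,d_2}[\varphi_p'],\varphi_p'\rangle\le\lambda_p(d_1,d_2),\]
and symmetrically in $d_2$.

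\emph{Parts (3) and (4).} These are mirror images; I treat (3). Fix $d_2>0$ and let $\phi^*\in X^{++}$ be the principal eigenfunction of the scalar problem defining $\kappa_1$. For the upper bound I use the supersolution $\Phi=\big(\delta\|\eta_1\|_\infty^{-1}\eta_1,\ \phi^*\big)$ with $\delta:=\ep(\min\phi^*)/G'(0)$: the second row of $\mathcal{L}_{d_1,d_2}[\Phi]\le(\kappa_1+\ep)\Phi$ is $G'(0)\mathcal{N}_{21}\big[\delta\|\eta_1\|_\infty^{-1}\eta_1\big]\le\ep\phi^*$, which holds by the choice of $\delta$, while the first row is $c\,\mathcal{N}_{12}[\phi^*]\le(\kappa_1+\ep+a)\delta\|\eta_1\|_\infty^{-1}\eta_1+d_1\delta\|\eta_1\|_\infty^{-1}{\bf 1}$ (using $\eta_1-\mathcal{N}_{11}[\eta_1]={\bf 1}$), which holds once $d_1$ is large since the left side is a fixed bounded function. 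For the lower bound I use the subsolution $\Phi=\big(d_1^{-1}\psi_1,\ \phi^*\big)$ with $\psi_1\equiv\tfrac{c}{2}\min_x\mathcal{N}_{12}[\phi^*](x)$, a positive constant (positive since $\phi^*>0$ and $J_{12}(0)>0$): the second row of $\mathcal{L}_{d_1,d_2}[\Phi]\ge(\kappa_1-\ep)\Phi$ is $d_1^{-1}G'(0)\mathcal{N}_{21}[\psi_1]\ge-\ep\phi^*$, trivially true, and the first row, multiplied by $d_1$, reads $\mathcal{N}_{11}[\psi_1]-\psi_1+c\,\mathcal{N}_{12}[\phi^*]\ge d_1^{-1}(\kappa_1-\ep+a)\psi_1$, which holds for $d_1$ large because $c\,\mathcal{N}_{12}[\phi^*]\ge2\psi_1$ while $\psi_1-\mathcal{N}_{11}[\psi_1]\le\psi_1$. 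Lemma~\ref{l2.3} then gives $\kappa_1-\ep\le\lambda_p(d_1,d_2)\le\kappa_1+\ep$ for $d_1$ large, and $\ep\downarrow0$ finishes it; if $d_2=0$ the scalar problem has $\kappa_1=-b$, the lower bound $\lambda_p(d_1,0)\ge-b$ is automatic from the uniform bound, and the supersolution with $\phi^*\equiv1$ gives the matching upper bound. Part (4) is obtained by interchanging the two components, i.e.\ $1\leftrightarrow2$, $d_1\leftrightarrow d_2$, $a\leftrightarrow b$, $c\leftrightarrow G'(0)$, $J_{12}\leftrightarrow J_{21}$.

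\emph{Part (5) and the main obstacle.} For (5) I would use $\lambda_p(d_1,d_2)\le\sup_{x\in[l_1,l_2],\,i=1,2}\mathcal{L}_i[\varphi](x)/\varphi_i(x)$ with the \emph{fixed} test pair $\varphi=(\eta_1,\eta_2)$ from the preliminaries: since $\mathcal{N}_{ii}[\eta_i]\le\theta_i\eta_i$ with $\theta_i<1$, $\eta_i\ge{\bf 1}$, and $\mathcal{N}_{12}[\eta_2],\mathcal{N}_{21}[\eta_1]$ are fixed bounded functions,
\[\frac{\mathcal{L}_1[\varphi]}{\eta_1}\le d_1(\theta_1-1)+c\|\eta_2\|_\infty-a,\qquad\frac{\mathcal{L}_2[\varphi]}{\eta_2}\le d_2(\theta_2-1)+G'(0)\|\eta_1\|_\infty-b,\]
so $\lambda_p(d_1,d_2)$ is bounded above by the maximum of the two right-hand sides, which tends to $-\yy$ as $(d_1,d_2)\to(\yy,\yy)$. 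The genuinely delicate point of the whole proposition is the scale matching in parts (3)--(4): the first component of the comparison function must be chosen of size $O(\ep)$ for the supersolution but of size $O(1/d_1)$ for the subsolution, and the supersolution construction specifically requires the strict diffusion supersolution $\eta_1$, hence the preliminary fact $r(\mathcal{N}_{ii})<1$ on a bounded interval. Parts (1), (2) and (5) are comparatively short once the two preliminary observations are in place.
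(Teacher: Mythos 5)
Your proposal is correct, and parts (1), (2) and (5) follow essentially the same route as the paper (continuity by perturbing a fixed eigenfunction and invoking Lemma~\ref{l2.3}; monotonicity via the variational characterization, with your symmetrization display simply spelling out the cited inequality $\iint J_{11}(x-y)\varphi_1(x)\varphi_1(y)\,\dx\dy-\int\varphi_1^2\,\dx<0$ from \cite[Prop.~3.4]{CDLL}; and for (5) a fixed positive test pair, with your $\eta_i=(\mathcal{I}-\mathcal{N}_{ii})^{-1}[{\bf 1}]$ playing the role of the paper's scalar eigenfunctions $\omega_i$, both yielding an upper bound linear in $\min\{d_1,d_2\}$ with negative slope).

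The genuine divergence is in parts (3)--(4), which are also the hardest part. The paper argues by compactness: take $d_1^n\to\infty$, normalize the eigenfunctions $\varphi^n$ in $X$, extract an $L^2$-weak limit $(\psi_1,\psi_2)$, divide the first equation by $d_1^n$ to force $\psi_1\equiv0$ (otherwise $\psi_1$ would be a zero-eigenvalue eigenfunction of $\mathcal{N}_{11}-\mathcal{I}$, contradicting $\lambda_1<0$), and then pass to the limit in the second equation to identify $\lambda_\infty=\kappa_1$. You instead build an explicit two-sided barrier: a supersolution $(\,\delta\|\eta_1\|_\infty^{-1}\eta_1,\ \phi^*\,)$ in which the first component is $O(\ep)$ and the identity $\eta_1-\mathcal{N}_{11}[\eta_1]={\bf 1}$ supplies the $d_1{\bf 1}$ term that absorbs $c\,\mathcal{N}_{12}[\phi^*]$ once $d_1$ is large, and a subsolution $(\,d_1^{-1}\psi_1,\ \phi^*\,)$ in which the first component is $O(d_1^{-1})$ and the constant $\psi_1$ is calibrated against $c\,\mathcal{N}_{12}[\phi^*]$. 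Both give $\kappa_1-\ep\le\lambda_p\le\kappa_1+\ep$ via Lemma~\ref{l2.3}. Your route is more constructive and sidesteps weak compactness and the passage to the limit in the eigenvalue equation, at the cost of the preliminary argument that $r(\mathcal{N}_{ii})<1$ on a bounded interval (which you prove correctly by integrating the Krein--Rutman eigenfunction relation and using that $\int_{l_1}^{l_2}J_{ii}(x-y)\dx<1$ near the endpoints; in fact this also follows directly from the paper's own citation $\lambda_i\in(-1,0)$ in \cite[Prop.~3.4]{CDLL}). The observation that the supersolution's first component must be $O(\ep)$ while the subsolution's must be $O(d_1^{-1})$ is precisely the right scale matching, and your proof handles the boundary case $d_2=0$ cleanly. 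Both approaches are valid; the paper's compactness argument generalizes more readily when the limiting eigenfunction is harder to guess, while yours gives quantitative control on the rate of convergence in $d_1$.
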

\begin{proof}
{\rm (1)} Choose any $(\bar{d}_1,\bar{d}_2)$ and $(d_1,d_2)\in [0,\yy)\times[0,\yy)$. Denote by $(\varphi_1,\varphi_2)$ the corresponding positive eigenfunction of $\lambda_p(d_1,d_2)$. Direct computations yield
\bess
&&\bar{d}_1\int_{l_1}^{l_2}J_{11}(x-y)\varphi_1(y)\dy+c\int_{l_1}^{l_2}J_{12}(x-y)\varphi_2(y)\dy-\bar{d}_1\varphi_1-a\varphi_1\\
&&=\lambda_p(d_1,d_2)\varphi_1+(\bar{d}_1-d_1)\int_{l_1}^{l_2}J_{11}(x-y)\varphi_1(y)\dy-(\bar d_1-d_1)\varphi_1\\
&&\le \lambda_p(d_1,d_2)\varphi_1+2|\bar{d}_1-d_1|\max_{x\in[l_1,l_2]}\{\varphi_1(x)\}\frac{\varphi_1(x)}{\dd\min_{x\in[l_1,l_2]}\varphi_1(x)}.
\eess
Similarly, we have
\bess
&&G'(0)\int_{l_1}^{l_2}J_{21}(x-y)\varphi_1(y)\dy+\bar{d}_2\int_{l_1}^{l_2}J_{22}(x-y)\varphi_2(y)\dy-\bar{d}_2\varphi_2-b\varphi_2\\
&&\le\lambda_p(d_1,d_2)\varphi_2+2|\bar{d}_2-d_2|\max_{x\in[l_1,l_2]}\{\varphi_2(x)\}\frac{\varphi_2(x)}{\dd\min_{x\in[l_1,l_2]}\varphi_2(x)}.\eess
Let
\[K=\max\{\frac{\max_{x\in[0,l]}\{\varphi_1(x)\}}{\dd\min_{x\in[0,l]}\varphi_1(x)},\frac{\max_{x\in[0,l]}\{\varphi_2(x)\}}{\dd\min_{x\in[0,l]}\varphi_2(x)}\}.\]
Thus, using Lemma \ref{l2.3} we have
\bes\label{2.14}
\lambda_p(\bar{d}_1,\bar{d}_2)\le \lambda_p(d_1,d_2)+2K(|\bar{d}_1-d_1|+|\bar{d}_2-d_2|).
\ees
Analogously,
\[\lambda_p(\bar{d}_1,\bar{d}_2)\ge \lambda_p(d_1,d_2)-2K(|\bar{d}_1-d_1|+|\bar{d}_2-d_2|),\]
which together with \eqref{2.14} gives
\[|\lambda_p(\bar{d}_1,\bar{d}_2)-\lambda_p(d_1,d_2)|\le2K(|\bar{d}_1-d_1|+|\bar{d}_2-d_2|).\]
Then the continuity follows.

{\rm (2)} Since $c=G'(0)$ and $J_{12}=J_{21}$, by Proposition \ref{p2.1}, the variational characteristic holds. We only show the monotonicity of $\lambda_p(d_1,d_2)$ about $d_1$ since the other case is similar. Let us fix $d_2$ and choose any $0<\bar{d}_1<d_1$. Denote by $\varphi=(\varphi_1,\varphi_2)$ the corresponding positive eigenfunction of $\lambda_p(d_1,d_2)$ with $\|\varphi\|_2=1$. Then we see
\bess
\lambda_p(d_1,d_2)&=&d_1\int_{l_1}^{l_2}\int_{l_1}^{l_2}J_{11}(x-y)\varphi_1(y)\varphi_1(x)\dy\dx+c\int_{l_1}^{l_2}\int_{l_1}^{l_2}J_{12}(x-y)\varphi_2(y)\varphi_1(x)\dy\dx\\ &&-(d_1+a)\int_{l_1}^{l_2}\varphi^2_1(x)\dx+G'(0)\int_{l_1}^{l_2}\int_{l_1}^{l_2}J_{21}(x-y)\varphi_1(y)\varphi_2(x)\dy\dx\\
&&+d_2\int_{l_1}^{l_2}\int_{l_1}^{l_2}J_{22}(x-y)\varphi_2(y)\varphi_2(x)\dy\dx-(d_2+b)\int_{l_1}^{l_2}\varphi^2_2(x)\dx.
\eess
Following from \cite[Proposition 3.4]{CDLL}, we have
\bess
\int_{l_1}^{l_2}\int_{l_1}^{l_2}J_{11}(x-y)\varphi_1(y)\varphi_1(x)\dy\dx-\int_{l_1}^{l_2}\varphi^2_1(x)\dx<0.
\eess
Hence
\bess
\lambda_p(d_1,d_2)&<&\bar d_1\int_{l_1}^{l_2}\int_{l_1}^{l_2}J_{11}(x-y)\varphi_1(y)\varphi_1(x)\dy\dx+c\int_{l_1}^{l_2}\int_{l_1}^{l_2}J_{12}(x-y)\varphi_2(y)\varphi_1(x)\dy\dx\\ &&-(\bar d_1+a)\int_{l_1}^{l_2}\varphi^2_1(x)\dx+G'(0)\int_{l_1}^{l_2}\int_{l_1}^{l_2}J_{21}(x-y)\varphi_1(y)\varphi_2(x)\dy\dx\\
&&+d_2\int_{l_1}^{l_2}\int_{l_1}^{l_2}J_{22}(x-y)\varphi_2(y)\varphi_2(x)\dy\dx-(d_2+b)\int_{l_1}^{l_2}\varphi^2_2(x)\dx\\
&&\le \lambda_p(\bar{d}_1,d_2).
\eess
The monotonicity is obtained.

(3) Since conclusions (3) and (4) are parallel, we only prove conclusion (3). Recalling \eqref{2.8}, we know $\lambda_p(d_1,d_2)\le\lambda_A$. Let $\underline{\varphi}=(0,\phi)$ where $\phi$ is the corresponding positive eigenfunction of $\kappa_1$. Clearly, $\mathcal{L}[\underline{\varphi}]\ge\kappa_1\underline{\varphi}$. By Lemma \ref{l2.1}, $\lambda_p(d_1,d_2)\ge\kappa_1$.

We first consider the case $d_2>0$.
Clearly, it suffices to prove that for any sequence $\{d^n_1\}$ with $d^n_1\to\yy$ as $n\to\yy$, there exists a subsequence, still denoted by itself, satisfying $\lambda_p(d^n_1,d_2)\to\kappa_1$ as $n\to\yy$. For convenience, denote $\lambda_p(d^n_1,d_2)$ by $\lambda^n_p$ since we fix $d_2$. Let $\varphi^n=(\varphi^n_1,\varphi^n_2)$ be the corresponding positive eigenfunction of $\lambda^n_p$ with $\|\varphi^n\|_{X}=1$. Notice that $\lambda^n_p$ is bounded independent of $n$. Thus there exists a subsequence of $\{n\}$, still denoted by itself, such that $(\varphi^n_1,\varphi^n_2)$ converges weakly to $(\psi_1,\psi_2)$ with $\psi_i\in L^2([l_1,l_2])$, and $\lambda^n_p\to\lambda_{\yy}\ge\kappa_1$ as $n\to\yy$. Due to $\varphi^n\in X^{++}$, we have $\psi_i\ge0$ for $i=1,2$. We claim that $\psi_1\equiv0$.
Dividing $\mathcal{L}_1[\varphi^n]=\lambda_p^n\varphi^n_1$ by $d^n_1$ and then letting $n\to\yy$ yield
\[\int_{l_1}^{l_2}J_{11}(x-y)\varphi^n_1(y)\dy-\varphi^n_1\to0 ~ ~ {\rm uniformly ~ in ~ }[l_1,l_2].\]
Since $\varphi^n_1$ converges weakly to $\psi_1$ and operator $\int_{l_1}^{l_2}J_{11}(x-y)\varphi^n_1(y)\dy$ is compact, we have as $n\to\yy$,
\[\int_{l_1}^{l_2}J_{11}(x-y)\varphi^n_1(y)\dy\to\int_{l_1}^{l_2}J_{11}(x-y)\psi_1(y)\dy ~ ~ {\rm uniformly ~ in ~ }[l_1,l_2].\]
Hence
\[\varphi^n_1\to \int_{l_1}^{l_2}J_{11}(x-y)\psi_1(y)\dy~ ~ {\rm uniformly ~ in ~ }[l_1,l_2].\]
By the uniqueness of weak limit, we have
\[\psi_1(x)=\int_{l_1}^{l_2}J_{11}(x-y)\psi_1(y)\dy ~ ~ {\rm for ~ }x\in[l_1,l_2].\]
If there exists some $x_0\in[l_1,l_2]$ such that $\psi_1(x_0)>0$, then it is not hard to show that $\psi_1(x)>0$ in $[l_1,l_2]$, which implies that $(0,\psi_1)$ is the principal eigenpair of problem
\[\int_{l_1}^{l_2}J_{11}(x-y)\omega(y)\dy-\omega(x)=\xi\omega.\]
However, from \cite[Proposition 3.4]{CDLL}, the principal eigenvalue $\xi$ must be less than $0$. This contradiction implies $\psi_1\equiv0$, and thus our claim holds.

Since $\varphi^n_1\to0$ in $C([l_1,l_2])$ and $\|\varphi^n\|_{X}=1$, we have $\|\varphi^n_2\|_{C([l_1,l_2])}\to1$ as $n\to\yy$. Note that $\varphi^n_2\to\psi_2$ weakly in $L^2([l_1,l_2])$ and operator $\int_{l_1}^{l_2}J_{22}(x-y)\varphi^n_2(y)\dy$ is compact, we have
\bess\int_{l_1}^{l_2}J_{22}(x-y)\varphi^n_2(y)\dy\to\int_{l_1}^{l_2}J_{22}(x-y)\psi_2(y)\dy ~ ~ {\rm uniformly~ in ~ }[l_1,l_2].
\eess
Moreover, due to $\varphi^n_1\to0$ in $C([l_1,l_2])$ as $n\to\yy$, as $n\to\yy$ we have
\[d_2\int_{l_1}^{l_2}J_{22}(x-y)\varphi^n_2(y)\dy-d_2\varphi^n_2-b\varphi^n_2-\lambda^n_p\varphi^n_2\to0 ~ ~ {\rm uniformly ~ in}~ [l_1,l_2].\]
By \cite[Proposition 3.4]{CDLL}, we see $d_2+b+\lambda^n_p\ge d_2+b+\kappa_1>0$.
So as $n\to\yy$, we have
\bess\varphi^n_2(x)\to\frac{\dd d_2\int_{l_1}^{l_2}J_{22}(x-y)\psi_2(y)\dy}{d_2+b+\kappa_1} ~ ~ {\rm uniformly ~ in}~ [l_1,l_2].\eess
By the uniqueness of weak limit, we obtain
\bes\label{2.11}d_2\int_{l_1}^{l_2}J_{22}(x-y)\psi_2(y)\dy-d_2\psi_2-b\psi_2=\lambda_{\yy}\psi_2.\ees
 Recall that $\|\varphi^n_2\|_{C([l_1,l_2])}\to1$ as $n\to\yy$. We obtain $\|\psi_2\|_{C([l_1,l_2])}=1$. Together with \eqref{2.11}, we easily derive that $\psi_2>0$ in $[l_1,l_2]$, which indicates that $\lambda_{\yy}=\kappa_1$.

 If we fix $d_2=0$, then we still have $\varphi^n_1\to0$ in $C([l_1,l_2])$ and thus $\|\varphi^n_2\|_{C([l_1,l_2])}\to1$ as $n\to\yy$. By $\mathcal{L}_2[\varphi^n]=\lambda^n_p\varphi^n_2$, we derive $|\lambda^n_p+b|\|\varphi^n_2\|_{C([l_1,l_2])}\to0$ as $n\to\yy$ which, combined with $\|\varphi^n_2\|_{C([l_1,l_2])}\to1$, leads to $\lambda_{\yy}=-b$.
Thus conclusion (4) is proved.

(5) It can be seen from \cite[Proposition 3.4]{CDLL} that for $i=1,2$, the following eigenvalue problem
\[\int_{l_1}^{l_2}J_{ii}(x-y)\omega(y)\dy-\omega(x)=\lambda\omega(x) ~ ~ {\rm for ~ }x\in[l_1,l_2]\]
has an eigenvalue $\lambda_i$ with a positive eigenfunction $\omega_i$ satisfying $\|\omega_i\|_{C([l_1,l_2])}=1$. Moreover, $\lambda_i\in(-1,0)$. Let
\bess
&\underline{d}=\min\{d_1,d_2\}, ~ ~ \underline{\lambda}=\min\{\lambda_1,\lambda_2\}\\ &K=a+b+\frac{c}{\min_{x\in[l_1,l_2]}\omega_1(x)}+\frac{G'(0)}{\min_{x\in[l_1,l_2]}\omega_2(x)}, ~ ~ \bar{\omega}=(\omega_1,\omega_2)^T.
\eess
Next we show $\mathcal{L}[\bar{\omega}]\le(\underbar{d}\underline{\lambda}+K)\bar{\omega}$. Once it is done, by Lemma \ref{l2.3}, we get $\lambda_p(d_1,d_2)\le\underline{d}\underline{\lambda}+K$. Then conclusion (6) obviously follows from the fact that $\underline{\lambda}<0$ and $K$ is independent of $(d_1,d_2)$.

Simple computations yield
\bess
&&d_1\int_{l_1}^{l_2}J_{11}(x-y)\omega_1(y)\dy+c\int_{l_1}^{l_2}J_{12}(x-y)\omega_2(y)\dy-d_1\omega_1-a\omega_1\\
&&\le (d_1\lambda_1+a+\frac{c}{\min_{x\in[l_1,l_2]}\omega_1(x)})\omega_1\le (d_1\lambda_1+K)\omega_1
\eess
Similarly we have $\mathcal{L}_2[\bar{\omega}]\le(d_2\lambda_2+K)\omega_2$.
Therefore, $\mathcal{L}[\bar{\omega}]\le(\underbar{d}\underline{\lambda}+K)\bar{\omega}$ and (6) is proved. The proof is ended.
\end{proof}

With the aid of the above results, we now discuss a fixed boundary problem which is associated to \eqref{1.7}. Let us start by considering the following steady state problem
\bes\left\{\begin{aligned}\label{2.12}
&d_1\int_{l_1}^{l_2}J_{11}(x-y)U(y)\dy-d_1U-aU+c\int_{l_1}^{l_2}J_{12}(x-y)V(y)\dy=0, & & x\in[l_1,l_2]\\
&d_2\int_{l_1}^{l_2}J_{22}(x-y)V(y)\dy-d_2V-bV+G(\int_{l_1}^{l_2}J_{21}(x-y)U(y)\dy)=0, & & x\in[l_1,l_2].
 \end{aligned}\right.
 \ees

 To emphasize the dependence of $\lambda_p$ on interval $[l_1,l_2]$, we denote it by $\lambda_p(l_1,l_2)$. Since the following result can be proved by using similar methods in \cite[Theorem 3.10 and Lemma 3.11]{WD1} or \cite[Propositions 2.10 and 2.11]{NV}, we omit the details here.

 \begin{lemma}\label{l2.4}Let $\lambda_p(l_1,l_2)$ be defined as above. Then the following statements are valid.
 \begin{enumerate}[$(1)$]
 \item If $\lambda_p(l_1,l_2)>0$, then problem \eqref{2.12} has a unique solution $(U,V)\in X^{++}$ and $(u^*-U,v^*-V)\in X^{++}$. Denote $(U,V)$ by $(U_{l_1,l_2},V_{l_1,l_2})$. Then  $(U_{l_1,l_2},V_{l_1,l_2})\ge (U_{\tilde l_1,\tilde l_2},V_{\tilde l_1,\tilde l_2})$ for all large $[\tilde{l}_1,\tilde{l}_2]\subseteq[l_1,l_2]$ and $(U_{l_1,l_2},V_{l_1,l_2})\to(u^*,v^*)$ in locally uniformly in $\mathbb{R}$ as $-l_1,l_2\to\yy$.

 \item If $\lambda_p(l_1,l_2)\le0$, then ${\bf 0}$ is the unique nonnegative solution of \eqref{2.12}.
 \end{enumerate}
 \end{lemma}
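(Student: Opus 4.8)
The plan is to treat the steady-state system \eqref{2.12} as a cooperative (quasi-monotone) problem and to run the classical monotone iteration between ordered sub- and supersolutions, using the sign of $\lambda_p(l_1,l_2)$ to produce a nontrivial subsolution. Reformulate \eqref{2.12} as the fixed-point equation $(U,V)=\mathcal{T}(U,V)$ with $\mathcal{T}_1(U,V)=(d_1+a)^{-1}(d_1\int_{l_1}^{l_2}J_{11}(x-y)U(y)\dy+c\int_{l_1}^{l_2}J_{12}(x-y)V(y)\dy)$ and $\mathcal{T}_2(U,V)=(d_2+b)^{-1}(d_2\int_{l_1}^{l_2}J_{22}(x-y)V(y)\dy+G(\int_{l_1}^{l_2}J_{21}(x-y)U(y)\dy))$; since $G$ is increasing and the kernels are nonnegative, $\mathcal{T}$ is monotone increasing on $X$. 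One checks that the constant $(u^*,v^*)$ from \eqref{1.2} satisfies $\mathcal{T}(u^*,v^*)\le(u^*,v^*)$, using $\int_{l_1}^{l_2}J_{ij}(x-y)\dy\le1$, the monotonicity of $G$ and the identities $cv^*=au^*$, $G(u^*)=bv^*$, so it is a supersolution. For a subsolution, take the principal eigenfunction $\varphi_p=(\varphi_1,\varphi_2)\in X^{++}$ of \eqref{2.9} (which exists by Proposition \ref{p2.1}) and set $(\underline U,\underline V)=\ep(\varphi_1,\varphi_2)$: the first component of $\mathcal{T}(\underline U,\underline V)-(\underline U,\underline V)$ equals $\ep\lambda_p\varphi_1/(d_1+a)>0$, and for the second one uses $G(z)\ge(G'(0)-\de)z$ for $0\le z\le z_\de$ (a consequence of $G(z)/z\to G'(0)$ as $z\to0^+$) together with $\min\varphi_2>0$ and $\lambda_p>0$ to choose $\de$, then $\ep$, small enough that this component is $\ge0$; shrinking $\ep$ further gives $(\underline U,\underline V)\le(u^*,v^*)$. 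Iterating $\mathcal{T}$ from $(u^*,v^*)$ and from $(\underline U,\underline V)$ yields a maximal solution $(\bar U,\bar V)$ and a minimal solution, both squeezed between $\ep\varphi_p$ and $(u^*,v^*)$, hence in $X^{++}$. Since $\int_{l_1}^{l_2}J_{11}(x-y)\dy<1$ near the endpoints of $[l_1,l_2]$ (here $J_{11}(0)>0$ from {\bf (J)} enters), the supersolution inequality is strict there; writing the equation satisfied by $(u^*-U,v^*-V)$ and applying the mean value theorem to $G$ turns it into a cooperative linear inequality to which a strong-maximum-principle argument in the spirit of Lemma \ref{l2.1} applies, giving $(u^*-U,v^*-V)\in X^{++}$.

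Uniqueness is where hypothesis {\bf(G)} is essential. I would compare the maximal solution $(\bar U,\bar V)$ with the minimal one $(\underline U,\underline V)$ via $t^*=\sup\{t\in(0,1]:t(\bar U,\bar V)\le(\underline U,\underline V)\}$ and show $t^*=1$. If $t^*<1$, compactness forbids strict inequality everywhere, so some component of $(\underline U,\underline V)-t^*(\bar U,\bar V)$ vanishes at some point; substituting into the corresponding equation of \eqref{2.12} and using that each $J_{ij}$ is positive near $0$, a connectedness/propagation argument along $[l_1,l_2]$ forces $(\underline U,\underline V)\equiv t^*(\bar U,\bar V)$, and then the second equation yields $G(t^*w)=t^*G(w)$ with $w=\int_{l_1}^{l_2}J_{21}(x-y)\bar U(y)\dy>0$, contradicting the strict monotonicity of $G(z)/z$. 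Hence $(\bar U,\bar V)=(\underline U,\underline V)=:(U_{l_1,l_2},V_{l_1,l_2})$ is the unique positive solution. Monotonicity in the domain follows since the restriction of $(U_{l_1,l_2},V_{l_1,l_2})$ to a smaller interval is a supersolution there (integrals over a smaller set decrease and $G$ is increasing), hence dominates the solution on that interval. Then $(U_{l_1,l_2},V_{l_1,l_2})$ is nondecreasing and bounded above by $(u^*,v^*)$ as $-l_1,l_2\to\yy$, so it converges locally uniformly to a bounded positive entire solution $(U_*,V_*)\le(u^*,v^*)$; passing to the limit in \eqref{2.12} on $\mathbb{R}$ and invoking the corresponding Liouville-type uniqueness (again using the strict subhomogeneity in {\bf(G)}, as in \cite{WD1,NV}) identifies $(U_*,V_*)=(u^*,v^*)$.

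For part (2), suppose $(U,V)\in X^+\setminus\{{\bf0}\}$ solves \eqref{2.12}. A strong-maximum-principle argument as in Lemma \ref{l2.1} gives $(U,V)\in X^{++}$. Since $G(z)<G'(0)z$ for all $z>0$ (the strict decrease of $G(z)/z$ together with $G(z)/z\to G'(0)$ as $z\to0^+$), for the operator $\mathcal{L}$ associated with \eqref{2.9} the two equations of \eqref{2.12} give $\mathcal{L}_1[(U,V)]=0$ and $\mathcal{L}_2[(U,V)]=G'(0)\int_{l_1}^{l_2}J_{21}(x-y)U(y)\dy-G(\int_{l_1}^{l_2}J_{21}(x-y)U(y)\dy)>0$, that is, $\mathcal{L}[(U,V)]\ge{\bf0}$ but $\mathcal{L}[(U,V)]\neq{\bf0}$. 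By Lemma \ref{l2.3}(2) this forces $\lambda_p(l_1,l_2)>0$, contradicting the assumption $\lambda_p(l_1,l_2)\le0$. Therefore ${\bf0}$ is the only nonnegative solution of \eqref{2.12}.

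The step I expect to be the main obstacle is the uniqueness in part (1): the sweeping argument together with the propagation/connectedness step needed precisely because the kernels $J_{ij}$ are only assumed positive near the origin rather than on all of $\mathbb{R}$, and the closely related Liouville-type identification of the entire-domain limit $(U_*,V_*)$. These are exactly the places where {\bf(G)} (strict monotonicity of $G(z)/z$ and $\lim_{z\to\yy}G(z)/z<ab/c$) is used in full, and where one must check that the various strong-maximum-principle and touching-type tools developed above really do apply to the system.
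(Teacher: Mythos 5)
Your proposal is correct and follows essentially the same monotone-iteration/sub-supersolution scheme (with a sweeping argument for uniqueness and Lemma \ref{l2.3}(2) for nonexistence when $\lambda_p\le0$) that the paper defers to by citing \cite{WD1} and \cite{NV}; the paper itself gives no independent proof. One small remark: the ``propagation/connectedness'' worry you flag for the sweeping step is already handled by Lemma \ref{l2.1}, which gives $X^{++}$ directly for any $X^+\setminus\{{\bf0}\}$ supersolution of the cooperative linearisation, so no separate propagation argument along $[l_1,l_2]$ is needed.
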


Next we consider the following fixed boundary problem.

\bes\left\{\begin{aligned}\label{2.21}
&u_t=d_1\int_{l_1}^{l_2}J_{11}(x-y)u(y)\dy-d_1u-au+c\int_{l_1}^{l_2}J_{12}(x-y)v(y)\dy, & & t>0, ~ ~ x\in[l_1,l_2]\\
&v_t=d_2\int_{l_1}^{l_2}J_{22}(x-y)v(y)\dy-d_2v-bv+G(\int_{l_1}^{l_2}J_{21}(x-y)u(y)\dy), & & t>0, ~ ~ x\in[l_1,l_2],\\
&u(0,x)=\tilde{u}_0(x), ~ ~v(0,x)=\tilde{v}_0(x),
 \end{aligned}\right.
 \ees
 where $(\tilde u_0,\tilde v_0)\in X^+\setminus\{{\bf 0}\}$. It is worthy mentioning that in this paper we will often use comparison principles, including the comparison principles for fixed boundary problem and for free boundary problem. Since these comparison principles are well known to us, we do not prove them here. The readers can refer to \cite[Lemmas 2.1 and 2.3]{WD1} for some similar versions.

 \begin{lemma}\label{l2.5}Let $(u,v)$ be the unique solution of \eqref{2.21}. Then the following statements are valid.
 \begin{enumerate}[$(1)$]
 \item If $\lambda_p(l_1,l_2)>0$, then $(u(t,x),v(t,x))\to(U,V)$ in $X$ as $t\to\yy$.

 \item If $\lambda_p(l_1,l_2)\le0$, then $(u(t,x),v(t,x))\to{\bf0}$ in $X$ as $t\to\yy$. Moreover, if $\lambda_p(l_1,l_2)<0$, then $(e^{kt}u(t,x),e^{kt}v(t,x))\to{\bf0}$ in $X$ for all $k\in(0,-\lambda_p(l_1,l_2))$ as $t\to\yy$.
 \end{enumerate}
 \end{lemma}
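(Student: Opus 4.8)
The plan is to treat \eqref{2.21} as a cooperative (quasimonotone) system and use the monotone dynamical system machinery together with the steady-state analysis of Lemma \ref{l2.4} and the principal eigenvalue information from Propositions \ref{p2.1}--\ref{p2.3}. First I would record the basic structural facts: the right-hand side of \eqref{2.21} is cooperative because, away from the diagonal terms $-d_1u-au$ and $-d_2v-bv$, every coefficient ($d_1J_{11}$, $cJ_{12}$, $d_2J_{22}$, and $G'$ composed with the positive operator $\int J_{21}$) is nonnegative, using {\bf(G)} and {\bf(J)}. Hence the comparison principle for the fixed boundary problem (the version referenced from \cite[Lemmas 2.1 and 2.3]{WD1}) applies, the solution map is order-preserving, and a solution trapped between an ordered pair of sub/supersolutions converges monotonically. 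I would also note global existence and boundedness: since $G$ is sublinear by {\bf(G)}, the constant pair $(M_1,M_2)$ with $M_1,M_2$ large is a supersolution, so $(u,v)$ stays bounded; and $(u,v)\gg{\bf 0}$ for $t>0$ by the strong positivity that follows from $J_{ij}(0)>0$.

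For part (1), assume $\lambda_p(l_1,l_2)>0$. By Lemma \ref{l2.4}(1) problem \eqref{2.12} has a unique positive solution $(U,V)\in X^{++}$ with $(U,V)\ll(u^*,v^*)$. I would build a sub/supersolution sandwich: for the subsolution take $\ep\varphi_p$ where $\varphi_p\in X^{++}$ is the principal eigenfunction from Proposition \ref{p2.1}; because $\lambda_p>0$, for $\ep$ small enough $\ep\varphi_p$ is a (time-independent) subsolution of \eqref{2.21}, using that $G(z)\ge G'(0)z - o(z)$ near $0$ so the linearization at ${\bf 0}$ dominates. For the supersolution take the constant $(M_1,M_2)$ as above, enlarged so it lies above the initial data and above $(U,V)$. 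Starting the semiflow from $\ep\varphi_p$ gives a solution increasing in $t$ (since $\ep\varphi_p$ is a subsolution), bounded above, hence converging to a positive equilibrium, which by uniqueness in Lemma \ref{l2.4}(1) must be $(U,V)$; starting from $(M_1,M_2)$ gives a solution decreasing to a nonnegative equilibrium, again necessarily $(U,V)$ since ${\bf 0}$ is unstable. Since the actual initial datum $(\tilde u_0,\tilde v_0)\in X^+\setminus\{{\bf 0}\}$ satisfies, after a short time $t_0>0$, $\ep\varphi_p\le(u(t_0,\cdot),v(t_0,\cdot))\le(M_1,M_2)$ for suitable $\ep$ (using strong positivity at $t_0$), the comparison principle squeezes $(u,v)\to(U,V)$ in $X$.

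For part (2), assume $\lambda_p(l_1,l_2)\le0$. By Lemma \ref{l2.4}(2), ${\bf 0}$ is the only nonnegative equilibrium. Here I would compare \eqref{2.21} from above with the linear cooperative system obtained by replacing $G\big(\int J_{21}u\big)$ with $G'(0)\int J_{21}u$ — legitimate because $G(z)\le G'(0)z$ for $z\ge 0$, which is exactly the concavity-type consequence of $G(z)/z$ strictly decreasing in {\bf(G)}. The linear system's solution is governed by $\lambda_p(l_1,l_2)$: writing $\psi(t,x)=e^{\lambda_p t}\varphi_p(x)$ as the separated solution and using that $C\varphi_p$ dominates the initial data for $C$ large, one gets $(u,v)\le Ce^{\lambda_p t}\varphi_p\to{\bf 0}$ when $\lambda_p\le 0$ (and exponentially when $\lambda_p<0$). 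Combined with $(u,v)\ge{\bf 0}$, this gives $(u,v)\to{\bf 0}$ in $X$; and for $\lambda_p<0$, $e^{kt}(u,v)\le Ce^{(k+\lambda_p)t}\varphi_p\to{\bf 0}$ for every $k\in(0,-\lambda_p)$, as claimed.

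The main obstacle I anticipate is making the ordered sub/supersolution pair honest at the nonlinear term $G$: one must control $G$ both from below near ${\bf 0}$ (to get $\ep\varphi_p$ as a strict subsolution, needing $\lambda_p>0$ to absorb the $o(\ep)$ error uniformly in $x$ on the compact interval $[l_1,l_2]$) and globally from above by its tangent line at $0$ (to linearize in part (2)). Both are quantitative consequences of {\bf(G)}, but they need to be stated carefully; once that is in place, everything else is the standard monotone-flow squeeze plus the already-established uniqueness and eigenvalue facts, so I would keep those parts brief — indeed the authors signal exactly this by citing \cite{WD1,NV} for the analogous steps.
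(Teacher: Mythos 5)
Your overall framework — cooperative structure, order-preserving semiflow, sub/supersolution squeeze for part (1), and linear domination via $G(z)\le G'(0)z$ plus the eigenfunction supersolution for the exponential decay — is the same route the paper takes (the paper relegates the convergence statements to \cite{DN4,NV} and sketches the exponential bound with the upper solution $Me^{-kt}\varphi_p$, which is exactly your $Ce^{\lambda_p t}\varphi_p$ up to the choice $k\in(0,-\lambda_p)$). Your verification that $\varepsilon\varphi_p$ is a subsolution when $\lambda_p>0$ (absorbing the $o(\varepsilon)$ error from $G(z)=G'(0)z-o(z)$ by the strict positivity of $\lambda_p\varphi_2$ on the compact interval) and that a large constant is a supersolution (using $\lim_{z\to\infty}G(z)/z<ab/c$) are both correct.

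There is, however, a genuine gap at the borderline case $\lambda_p(l_1,l_2)=0$. You assert that $(u,v)\le Ce^{\lambda_p t}\varphi_p\to\mathbf{0}$ ``when $\lambda_p\le 0$,'' but when $\lambda_p=0$ this bound is $C\varphi_p$, constant in time, and does not decay. The linear comparison alone therefore proves nothing in this case, and the lemma explicitly claims convergence to $\mathbf{0}$ for $\lambda_p=0$ as well. The fix is the very tool you mention but never deploy: Lemma \ref{l2.4}(2) says $\mathbf{0}$ is the unique nonnegative steady state when $\lambda_p\le 0$. Run the semiflow from a constant supersolution $(M_1,M_2)$ chosen above the initial data; by the comparison principle this trajectory is nonincreasing in $t$ and bounded below by $\mathbf{0}$, hence converges (uniformly, by compactness of the integral operators) to a nonnegative equilibrium, which must be $\mathbf{0}$. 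Squeezing then gives $(u,v)\to\mathbf{0}$ in $X$ for $\lambda_p\le 0$, and your exponential estimate applies separately when $\lambda_p<0$. With that patch, the argument is correct and matches the paper's intended proof.
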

 \begin{proof}The convergence results in $X$ can be proved by using similar methods as in  \cite[Proposition 3.4]{DN4} or \cite[Proposition 2.12]{NV}. Hence we only give the sketch for proof of the exponential stability.  Let $\varphi=(\varphi_1,\varphi_2)^T$ be the corresponding positive function of $\lambda_p(l_1,l_2)$. Define $\bar{u}=Me^{-kt}\varphi_1(x)$ and $\bar{v}=Me^{-kt}\varphi_2(x)$ with positive constants $M$ and $k$ to be determined later. It is not hard to verify that there exist suitable $M$ and $k$ such that $(\bar{u},\bar{v})$ is an upper solution of \eqref{2.21}. Then the desired result follows from a comparison argument. The details are ignored here.
 \end{proof}

\section{Dynamics of \eqref{1.7}}
In this section, we investigate the dynamics of \eqref{1.7}. We first show spreading-vanishing dichotomy holds, and then discuss the criteria determining when spreading or vanishing happens. Note that the well-posedeness of \eqref{1.7}, Theorem \ref{t1.1}, can be proved by using analogous methods as in the proof of \cite[Theorem 1.1]{WD1}. The details are thus omitted here. From Theorem \ref{t1.1}, we see that $h(t)$ is strictly increasing in $[0,\yy)$,  and thus $g_{\yy}:=\lim_{t\to\yy}g(t)\in(h_0,\yy]$ and $h_{\yy}:=\lim_{t\to\yy}h(t)\in(h_0,\yy]$ are well defined. We call the case $h_{\yy}-g_{\yy}<\yy$ {\it vanishing}, and call $h_{\yy}-g_{\yy}=\yy$ {\it spreading}.

\begin{lemma}\label{l3.1}If $h_{\yy}-g_{\yy}<\yy$, then $\lambda_p(g_{\yy},h_{\yy})\le0$ and $\lim_{t\to\yy}\|u(t,x)+v(t,x)\|_{C([g(t),h(t)])}=0$. Moreover, if $\lambda_p(g_{\yy},h_{\yy})<0$, then $\lim_{t\to\yy}e^{kt}\|u(t,x)+v(t,x)\|_{C([g(t),h(t)])}=0$ for all $k<-\lambda_p(g_{\yy},h_{\yy})$.
\end{lemma}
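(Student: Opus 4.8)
The plan is to treat the vanishing case by a comparison argument that confines the solution $(u,v)$ of the free boundary problem \eqref{1.7} between the zero function and the solution of a fixed boundary problem \eqref{2.21} on an interval slightly larger than $[g_\yy,h_\yy]$, and then invoke Lemma \ref{l2.5}. First I would establish that $\lambda_p(g_\yy,h_\yy)\le 0$. Suppose to the contrary that $\lambda_p(g_\yy,h_\yy)>0$. Since $-g(t)$ and $h(t)$ are strictly increasing with finite limits $g_\yy,h_\yy$, there is $T$ large so that $[g(T),h(T)]$ is close enough to $[g_\yy,h_\yy]$ that, by the continuity of $\lambda_p$ with respect to the interval (Proposition \ref{p2.2}(1)), we still have $\lambda_p(g(T),h(T))>0$. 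On the finite interval $[g(T),h(T)]$, compare $(u,v)$ for $t\ge T$ with the solution $(\hat u,\hat v)$ of the fixed boundary problem \eqref{2.21} posed on $[g(T),h(T)]$ with initial data $(u(T,\cdot),v(T,\cdot))$; since $g,h$ keep moving outward, $(u,v)$ restricted to $[g(T),h(T)]$ is a super-solution of that fixed boundary problem (the nonlocal integrals over $[g(t),h(t)]$ dominate those over $[g(T),h(T)]$ because $u,v\ge 0$), hence $(u,v)\ge(\hat u,\hat v)$ there. By Lemma \ref{l2.5}(1), $(\hat u,\hat v)\to(U,V)\in X^{++}$, so $\liminf_{t\to\yy}u(t,x)\ge U(x)>0$ on a set of positive measure near the center. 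But then the free boundary speed $h'(t)=\int_{g(t)}^{h(t)}\int_{h(t)}^{\yy}[\mu_1 J_1(x-y)u+\mu_2 J_2(x-y)v]\,\dy\dx$ stays bounded below by a positive constant for all large $t$ (using {\bf(J)}, $J_1(0)>0$, and the uniform lower bound on $u$), forcing $h(t)\to\yy$, contradicting $h_\yy<\yy$. Hence $\lambda_p(g_\yy,h_\yy)\le 0$.

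Next, for the decay statement, fix a small $\ep>0$ and consider the enlarged interval $[g_\yy-\ep,h_\yy+\ep]$. By Proposition \ref{p2.2}(1) and $\lambda_p(g_\yy,h_\yy)\le 0$, choosing $\ep$ appropriately we still have $\lambda_p(g_\yy-\ep,h_\yy+\ep)\le 0$ (if $\lambda_p(g_\yy,h_\yy)<0$ we can keep it strictly negative). Since $[g(t),h(t)]\subset[g_\yy-\ep,h_\yy+\ep]$ for all $t\ge 0$, extend $(u,v)$ by zero outside $[g(t),h(t)]$; the extended pair is a sub-solution of the fixed boundary problem \eqref{2.21} on $[g_\yy-\ep,h_\yy+\ep]$ with initial data $(u_0,v_0)$ extended by zero (again because enlarging the domain of the nonlocal integrals only increases the reaction, and $G$ is monotone by {\bf(G)}). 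Let $(\tilde u,\tilde v)$ be the solution of that fixed boundary problem; then $0\le u(t,x)\le\tilde u(t,x)$ and $0\le v(t,x)\le\tilde v(t,x)$ on $[g(t),h(t)]$. Lemma \ref{l2.5}(2) gives $(\tilde u,\tilde v)\to{\bf 0}$ in $X=[C([g_\yy-\ep,h_\yy+\ep])]^2$, hence $\|u(t,\cdot)+v(t,\cdot)\|_{C([g(t),h(t)])}\le\|\tilde u(t,\cdot)\|_{C}+\|\tilde v(t,\cdot)\|_{C}\to 0$. If moreover $\lambda_p(g_\yy,h_\yy)<0$, pick $\ep$ so small that $\lambda_p(g_\yy-\ep,h_\yy+\ep)<0$ and, for any $k<-\lambda_p(g_\yy,h_\yy)$, still $k<-\lambda_p(g_\yy-\ep,h_\yy+\ep)$; the exponential decay part of Lemma \ref{l2.5}(2) then yields $e^{kt}(\|\tilde u(t,\cdot)\|_C+\|\tilde v(t,\cdot)\|_C)\to 0$, and the sandwich inequality transfers this to $(u,v)$.

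The main obstacle is the first step, ruling out $\lambda_p(g_\yy,h_\yy)>0$: one must carefully justify that the restriction of $(u,v)$ to the fixed interval $[g(T),h(T)]$ is a genuine super-solution of \eqref{2.21} (so that the comparison principle referenced after \eqref{2.21} applies) and then convert the resulting uniform positive lower bound on $u$ into a uniform positive lower bound on $h'(t)$, which contradicts $h_\yy<\yy$. The monotonicity of $h$ and $-g$ (from Theorem \ref{t1.1}) and the continuity of $l\mapsto\lambda_p$ (Proposition \ref{p2.2}) are what make the choice of $T$ and $\ep$ possible; the remaining estimates are routine once the comparison setup is in place. An alternative to the lower-bound-on-$h'$ argument is to note directly that if $\lambda_p(g_\yy,h_\yy)>0$ then $(u,v)$ cannot converge to ${\bf 0}$, whereas the standard free-boundary theory forces $\|u+v\|_{C([g(t),h(t)])}\to 0$ whenever $h_\yy-g_\yy<\yy$ (this itself being proved via the same comparison with a fixed boundary problem whose principal eigenvalue is then necessarily nonpositive), so the two routes amount to the same estimate.
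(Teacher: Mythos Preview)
Your first step (ruling out $\lambda_p(g_\yy,h_\yy)>0$) is essentially the same as the paper's: compare $(u,v)$ from below with a fixed-boundary solution on a subinterval with positive principal eigenvalue, obtain a uniform positive lower bound on $(u,v)$ there, and deduce $h'(t)\ge c_0>0$, contradicting $h_\yy<\yy$. The paper uses the fixed interval $[g_\yy+\ep,h_\yy-\ep]$ while you use $[g(T),h(T)]$; both work for the same reason.

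Your second step, however, contains a genuine gap. You enlarge the interval to $[g_\yy-\ep,h_\yy+\ep]$ and claim that ``choosing $\ep$ appropriately we still have $\lambda_p(g_\yy-\ep,h_\yy+\ep)\le 0$.'' But Proposition~\ref{p2.2}(1) says $\lambda_p$ is \emph{strictly increasing} in the interval length, so enlarging the interval strictly increases $\lambda_p$. In the borderline case $\lambda_p(g_\yy,h_\yy)=0$ (which the first step does not exclude), you get $\lambda_p(g_\yy-\ep,h_\yy+\ep)>0$ for every $\ep>0$, and Lemma~\ref{l2.5}(2) no longer applies. The enlargement is also unnecessary: since $g(t)$ is strictly decreasing to $g_\yy$ and $h(t)$ strictly increasing to $h_\yy$, one already has $[g(t),h(t)]\subset[g_\yy,h_\yy]$ for all $t\ge 0$. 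The paper simply takes $(\bar u,\bar v)$ to be the solution of \eqref{2.21} on $[g_\yy,h_\yy]$ with constant initial data $(\|u_0\|_{C},\|v_0\|_{C})$, applies the comparison principle directly on $[g_\yy,h_\yy]$, and then invokes Lemma~\ref{l2.5}(2) with the exact eigenvalue $\lambda_p(g_\yy,h_\yy)\le 0$. If you drop the $\ep$-enlargement and work on $[g_\yy,h_\yy]$ itself, your argument goes through (and the exponential-rate statement then holds for every $k<-\lambda_p(g_\yy,h_\yy)$ without any loss).
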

\begin{proof}We first prove that if $h_{\yy}-g_{\yy}<\yy$, then $\lambda_p(g_{\yy},h_{\yy})\le0$. Otherwise, by the continuity of $\lambda_p(l_1,l_2)$ on $[l_1,l_2]$, there exist small $\ep>0$ and $\delta>0$ such that $\lambda_p(g_{\yy}+\ep,h_{\yy}-\ep)>0$ and $\min\{J_{11}(x),J_{22}(x)\}\ge\delta$ for $|x|\le 2\ep$ due to {\bf (J)}.  Moreover, there is $T>0$ such that $g(t)<g_{\yy}+\ep$ and $h(t)>h_{\yy}-\ep$ for $t\ge T$. Hence the solution component $(u,v)$ of \eqref{1.7} satisfies that for all $t>T$ and $x\in[g_{\yy}+\ep,h_{\yy}-\ep]$,
\bess\left\{\begin{aligned}
&u_t\ge d_1\int_{g_{\yy}+\ep}^{h_{\yy}-\ep}J_{11}(x-y)u(y)\dy-d_1u-au+c\int_{g_{\yy}+\ep}^{h_{\yy}-\ep}J_{12}(x-y)v(t,y)\dy,\\
&v_t\ge d_2\int_{g_{\yy}+\ep}^{h_{\yy}-\ep}J_{22}(x-y)v(y)\dy-d_2v-bv+G(\int_{g_{\yy}+\ep}^{h_{\yy}-\ep}J_{21}(x-y)u(y)\dy), \\
&u(T,x)>0, ~ ~v(T,x)>0.
 \end{aligned}\right.
 \eess
 Let $(\underline{u},\underline{v})$ be the solution of \eqref{2.21} with $[l_1,l_2]=[g_{\yy}+\ep,h_{\yy}-\ep]$, $\tilde{u}_0(x)=u(T,x)$ and $\tilde{v}_0(x)=v(T,x)$. Noticing that $\lambda_p(g_{\yy}+\ep,h_{\yy}-\ep)>0$, by Lemma \ref{l2.5} we have $(\underline{u},\underline{v})\to(U,V)$ in $X$ as $t\to\yy$ where $(U,V)$ is the positive steady state of \eqref{2.12} with $[l_1,l_2]=[g_{\yy}+\ep,h_{\yy}-\ep]$. Furthermore, by comparison principle, we see $u(t+T,x)\ge \underline{u}(t,x)$ and $v(t+T,x)\ge \underline{v}(t,x)$ for $x\in[g_{\yy}+\ep,h_{\yy}-\ep]$, which implies that $\liminf_{t\to\yy}(u(t,x),v(t,x))\ge(U,V)$ uniformly in $[g_{\yy}+\ep,h_{\yy}-\ep]$. Thus there exist a small $\sigma>0$ and a large $T_1\gg T$ such that $u(t,x)\ge\sigma$ and $v(t,x)\ge\sigma$ for $t\ge T_1$ and $[g_{\yy}+\ep,h_{\yy}-\ep]$. In view of the equation of $h(t)$, we have for $t>T_1$
 \bess
 h'(t)&=&\mu_1\int_{g(t)}^{h(t)}\int_{h(t)}^{\yy}J_{11}(x-y)u(t,x)\dy\dx+\mu_2\int_{g(t)}^{h(t)}\int_{h(t)}^{\yy}J_{22}(x-y)v(t,x)\dy\dx\\
 &\ge&\mu_1\int_{h_{\yy}-\frac{3\ep}{2}}^{h_{\yy}-\ep}\int_{h_{\yy}}^{h_{\yy}+\frac{\ep}{2}}J_{11}(x-y)\sigma\dy\dx
 +\mu_2\int_{h_{\yy}-\frac{3\ep}{2}}^{h_{\yy}-\ep}\int_{h_{\yy}}^{h_{\yy}+\frac{\ep}{2}}J_{22}(x-y)\sigma\dy\dx\\
 &\ge&(\mu_1+\mu_2)\delta\sigma>0,
 \eess
 which clearly contradicts $h_{\yy}<\yy$. Thus $\lambda_p(g_{\yy},h_{\yy})\le0$.

 Let $(\bar{u},\bar{v})$ be the solution of \eqref{2.21} with $[l_1,l_2]=[g_{\yy},h_{\yy}]$, $\tilde{u}_0(x)=\|u_0\|_{C([-h_0,h_0])}$ and $\tilde{v}_0(x)=\|v_0\|_{C([-h_0,h_0])}$. Clearly, $\bar{u}(t,x)\ge u(t,x)$ and $\bar{v}\ge v(t,x)$ for $t\ge0$ and $x\in[g(t),h(t)]$. Note that $\lambda_p(g_{\yy},h_{\yy})\le0$. Then the convergence results follow from Lemma \ref{l2.5}. The proof is ended.
\end{proof}

The above result shows that if vanishing happens, then the epidemic will disappear in the long run. However, the following result implies that if spreading occurs, then the epidemic will be successfully transmitted to whole space. Since one can prove it by similar lines in the proof of \cite[Lemmas 4.5 and 4.6]{WD1}, we omit the details.

\begin{lemma}\label{l3.2}If $h_{\yy}-g_{\yy}=\yy$( necessarily $\mathcal{R}_0>1$, see Lemma \ref{l3.3} ), then $-g_{\yy}=\yy$, $h_{\yy}=\yy$ and $(u(t,x),v(t,x))\to(u^*,v^*)$ in $C_{loc}(\mathbb{R})$ as $t\to\yy$.
\end{lemma}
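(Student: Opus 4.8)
\textbf{Proof proposal for Lemma \ref{l3.2}.}

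The plan is to follow the standard two-part scheme for establishing spreading: first upgrade ``$h_\yy - g_\yy = \yy$'' to ``$-g_\yy = h_\yy = \yy$'', and then show the solution converges to the positive equilibrium $(u^*,v^*)$ locally uniformly. For the first part I would argue by contradiction: suppose, say, $h_\yy = \yy$ but $g_\yy > -\yy$. From the equation for $g'(t)$ and the fact that $-g$ is monotone with a finite limit, $g'(t) \to 0$; combined with a lower bound on $(u,v)$ away from the boundary (obtained as in the proof of Lemma \ref{l3.1}, via the positivity that propagates from the interior once the habitat is long enough) one derives that $\int_{g(t)}^{h(t)}\int_{-\yy}^{g(t)}[\mu_1 J_1(x-y)u + \mu_2 J_2(x-y)v]\,\dy\dx$ stays bounded below by a positive constant, forcing $-g_\yy = \yy$, a contradiction. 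The same reasoning handles the symmetric case, so necessarily both endpoints diverge. (Along the way one also needs $\mathcal{R}_0 > 1$, which is exactly the content of the forthcoming Lemma \ref{l3.3}.)

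For the convergence part, the idea is a squeeze between a subsolution built from the fixed-boundary problem on an expanding interval and a supersolution built from the ODE/steady-state bound. Given any $R>0$, for $t$ large we have $[-R,R]\subset (g(t),h(t))$ and in fact $(g(t),h(t))$ exhausts $\R$. Fix a large interval $[l_1,l_2]$ with $\lambda_p(l_1,l_2)>0$ — possible by Proposition \ref{p2.2}(2) since $\lambda_p(l)\to\lambda_A>0$ when $\mathcal{R}_0>1$. Choose $T$ so that $[l_1,l_2]\subset (g(T),h(T))$; since $(u,v)$ is positive in the interior, $(u(T,\cdot),v(T,\cdot))$ dominates a small positive multiple of any fixed positive datum on $[l_1,l_2]$. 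Let $(\uud,\vvd)$ solve \eqref{2.21} on $[l_1,l_2]$ with that small initial datum; by Lemma \ref{l2.5}(1), $(\uud,\vvd)\to(U_{l_1,l_2},V_{l_1,l_2})$ in $X$, and by the comparison principle $(u,v)(t+T,\cdot)\ge(\uud,\vvd)(t,\cdot)$ on $[l_1,l_2]$. Hence $\liminf_{t\to\yy}(u,v)(t,x)\ge (U_{l_1,l_2},V_{l_1,l_2})(x)$ uniformly on compact subsets of $(l_1,l_2)$. Letting $-l_1,l_2\to\yy$ and invoking Lemma \ref{l2.4}(1), which gives $(U_{l_1,l_2},V_{l_1,l_2})\to(u^*,v^*)$ locally uniformly, we obtain $\liminf_{t\to\yy}(u,v)(t,x)\ge(u^*,v^*)$ in $C_{loc}(\R)$.

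For the matching upper bound, note that from Theorem \ref{t1.1} the solution is globally bounded, and by extending $(u,v)$ by zero outside $(g(t),h(t))$ one sees $(u,v)$ is a subsolution of the Cauchy problem on all of $\R$; comparing with the spatially homogeneous solution of the cooperative ODE system $\bar u' = -(a+d_1)\bar u + d_1\bar u + c\bar v = -a\bar u + c\bar v$, $\bar v' = -b\bar v + G(\bar u)$ (which under $\mathcal{R}_0>1$ converges to $(u^*,v^*)$ from large data, by the monotone dynamics of \eqref{1.1}) yields $\limsup_{t\to\yy}(u,v)(t,x)\le(u^*,v^*)$ uniformly in $\R$. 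Combining the two bounds gives $(u,v)(t,x)\to(u^*,v^*)$ in $C_{loc}(\R)$. The main obstacle I anticipate is the first step — ruling out a one-sided finite limit $g_\yy>-\yy$ — because it requires a uniform-in-time positive lower bound on $(u,v)$ near the (moving) left boundary even though that boundary is by assumption not moving much; this is where one must carefully combine the interior positivity propagation with the continuity and monotonicity of $\lambda_p$ and the lower bound on the kernels near the origin, exactly as in Lemma \ref{l3.1}. The convergence step is comparatively routine given Lemmas \ref{l2.4} and \ref{l2.5}.
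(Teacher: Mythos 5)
The paper actually omits a proof of this lemma, deferring to Lemmas~4.5 and~4.6 of \cite{WD1}, so there is no in-text argument to compare against line by line; but your proposal follows the standard scheme and is essentially correct. Both halves (ruling out a one-sided finite limit, and the two-sided squeeze via the fixed-boundary problem from below and the spatially constant ODE solution from above) are exactly the right tools, and your supersolution check for the constant $(\bar u,\bar v)$ goes through because $\int_{g(t)}^{h(t)}J_{ij}(x-y)\,\dy\le 1$ and $G$ is increasing, so the nonlocal reaction terms only help.

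One small imprecision worth flagging: the assertion that ``$-g$ is monotone with a finite limit'' implies $g'(t)\to 0$ is not valid in general (a bounded monotone $C^1$ function need not have derivative tending to $0$ pointwise), and you do not need it. Once you have a uniform-in-time positive lower bound for $(u,v)$ on a fixed subinterval $[g_\yy+\ep,\,g_\yy+\ep+L]$ (obtained exactly as in Lemma~\ref{l3.1} after choosing $L$ with $\lambda_p(g_\yy+\ep,g_\yy+\ep+L)>0$), and the kernels are bounded below near the origin, the integral in the $g'$-equation is $\ge c>0$ for all large $t$, so $g'(t)\le -c$ and hence $g(t)\to -\yy$ directly; this contradicts $g_\yy>-\yy$ without any reference to $g'(t)\to 0$. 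You should also say a word justifying that the cooperative ODE obtained for spatially constant data converges to $(u^*,v^*)$ from large initial data (this uses the monotone dynamics of the ODE together with the sublinearity hypothesis on $G$ in \textbf{(G)}, which bounds orbits and makes $(u^*,v^*)$ the unique positive equilibrium), and that the comparison between the zero-extension of $(u,v)$ and the Cauchy-problem solution is the cooperative-system comparison principle, which the paper imports from \cite{WD1} rather than states. Neither of these is a genuine gap, but they are worth making explicit.
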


Clearly, the spreading-vanishing dichotomy, Theorem \ref{t1.2}, follows from Lemmas \ref{l3.1} and \ref{l3.2}. Next we discuss when spreading or vanishing happens.

\begin{lemma}\label{l3.3} If $\mathcal{R}_0\le1$, then vanishing happens. Particularly,
\[h_{\yy}-g_{\yy}\le\int_{-h_0}^{h_0}\left[u_0(x)+\frac{c}{b}v_0(x)\right]\dx+\min\kk\{\frac{d_1}{\mu_1},\,\frac{cd_2}{b\mu_2}\rr\}2h_0.\]
\end{lemma}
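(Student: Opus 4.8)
The plan is to show two things: first, that $\mathcal{R}_0\le 1$ forces $\lambda_p(g_\yy,h_\yy)\le 0$ (indeed $\le 0$ for \emph{every} finite interval, not just the limiting one) so that spreading cannot occur, and second, to obtain the explicit upper bound on $h_\yy-g_\yy$ by a direct integration of the free boundary conditions against a suitable Lyapunov-type functional. For the first part, I would argue by contradiction in the spirit of Lemma \ref{l3.1}: if $h_\yy-g_\yy=\yy$, then for any $L$ we eventually have $[-L,L]\subset(g(t),h(t))$, and restricting the equations of $(u,v)$ to $[-L,L]$ gives a lower-solution pair for the fixed-boundary problem \eqref{2.21} on $[-L,L]$. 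If $\lambda_p(-L,L)>0$ for $L$ large, Lemma \ref{l2.5} would push $(u,v)$ above a positive steady state, contradicting (via Lemma \ref{l3.2} or a direct argument) the behavior forced by $\mathcal{R}_0\le 1$. So the key point is: \textbf{when $\mathcal{R}_0\le 1$, $\lambda_p(l_1,l_2)<0$ for all bounded intervals}, equivalently $\lambda_A\le 0$ (by Proposition \ref{p2.2}(2), $\lambda_p(l)<\lambda_A$, and $\lambda_A\le 0$ follows from $\mathcal{R}_0=\frac{cG'(0)}{ab}\le 1$ via formula \eqref{2.6}, since $\lambda_A\le 0 \iff a_{11}-b_1,a_{22}-b_2\le 0$ and $(b_1-a_{11})(b_2-a_{22})\ge a_{12}a_{21}$, i.e. $ab\ge cG'(0)$ with the relevant $a_{ii}=0$). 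Hence $\lambda_p(g_\yy,h_\yy)<0$ never holds only if $h_\yy-g_\yy=\yy$; but if $h_\yy-g_\yy=\yy$ one checks directly that no positive steady state on $\mathbb{R}$ exists when $\mathcal{R}_0\le 1$ (Lemma \ref{l2.4}(2) in the limit), forcing vanishing — a contradiction unless $h_\yy-g_\yy<\yy$ from the start.

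For the explicit bound I would integrate in space. Set $W(t,x)=u(t,x)+\frac{c}{b}v(t,x)$ extended by $0$ outside $(g(t),h(t))$. Multiplying the first equation by $1$, the second by $c/b$, adding, and integrating over $(g(t),h(t))$, the nonlocal diffusion terms $d_i\bigl(\int J_{ii}(x-y)\cdot\,\dy-\cdot\bigr)$ integrate (using $\int_\R J_{ii}=1$ and evenness, as in {\bf (J)}) to something controlled by the boundary flux; the reaction terms contribute $-a\!\int u-\frac{c}{b}\cdot b\!\int v+\frac{c}{b}\!\int G(\int J_{21}u)+c\!\int\!\int J_{12}v$, and since $\mathcal{R}_0\le 1$ gives $G(z)\le G'(0)z\le \frac{ab}{c}z$, the $v$- and $G$-terms combine to leave a nonpositive total reaction, so $\frac{d}{dt}\!\int W\,\dx$ is bounded above by (a constant times) $h'(t)-g'(t)$. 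More precisely, I expect the identity to read $\frac{d}{dt}\int_{g(t)}^{h(t)} W\,\dx \le \bigl(\text{nonpositive}\bigr) - \bigl(\tfrac{d_1}{\mu_1}\wedge\tfrac{cd_2}{b\mu_2}\bigr)^{-1}\!\bigl(h'(t)-g'(t)\bigr)$ — coming from comparing the diffusion loss at the front with the free-boundary speed, where $h'(t)=\mu_1\!\int\!\!\int J_1 u+\mu_2\!\int\!\!\int J_2 v$. Integrating from $0$ to $\yy$ and using $\int W\ge 0$ yields $\bigl(\tfrac{d_1}{\mu_1}\wedge\tfrac{cd_2}{b\mu_2}\bigr)^{-1}(h_\yy-g_\yy-2h_0)\le \int_{-h_0}^{h_0}W(0,x)\,\dx$, i.e. the stated inequality. (One must be a little careful about whether $J_1,J_2$ in the free-boundary condition coincide with $J_{11},J_{22}$ in the diffusion term; if not, a comparison or an absorbing of constants into the "$\min$" is needed — this is a bookkeeping matter.)

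The main obstacle I anticipate is the second part: getting the flux-to-speed comparison with the \emph{sharp} constant $\min\{d_1/\mu_1,\,cd_2/(b\mu_2)\}$. The natural estimate $\int_{g(t)}^{h(t)}\!\bigl(\int_{g(t)}^{h(t)}J_{ii}(x-y)u\,\dy - u\bigr)\dx = -\int_{g(t)}^{h(t)}\!\bigl(\int_{-\yy}^{g(t)}+\int_{h(t)}^{\yy}\bigr)J_{ii}(x-y)u(x)\,\dy\,\dx$ has exactly the structure of the boundary-speed integrals, differing only by the coefficients $d_i$ versus $\mu_i$ (and the kernel identification). Taking the worse of the two ratios for $u$ and for $v$ — after weighting $v$ by $c/b$ — produces the $\min$. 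The finiteness of $\mathcal{R}_0$ is used only to kill the reaction; the geometry of the nonlocal operator does all the rest. Once the differential inequality is in hand, everything else is routine integration, and vanishing (in the sense $h_\yy-g_\yy<\yy$, hence by Lemma \ref{l3.1} also $\|u+v\|\to 0$) is immediate.
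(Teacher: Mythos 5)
Your second paragraph is exactly the paper's proof: the paper differentiates $\int_{g(t)}^{h(t)}\big[u+\tfrac{c}{b}v\big]\dx$, identifies the nonlocal diffusion loss with the boundary-flux integrals appearing in $h'(t)$ and $-g'(t)$ (after using symmetry and $\int_{\R}J_{ii}=1$), kills the reaction terms using $G(z)\le G'(0)z$ together with $\mathcal{R}_0\le1$ (so $\tfrac{c}{b}G'(0)\le a$ and $\int\!\int J_{ij}w\le\int w$), and integrates in time. So the core of your proposal is the right route and you correctly anticipated all the pieces, including the need to verify how $J_1,J_2$ in the free-boundary conditions relate to $J_{11},J_{22}$ (in the paper's proof they are simply taken to be the same).

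Two things worth tightening. First, your opening paragraph on showing $\lambda_p\le 0$ is superfluous for this lemma: the explicit bound already forces $h_\yy-g_\yy<\yy$, which is the definition of vanishing, and the decay of $\|u+v\|$ then follows from Lemma \ref{l3.1} without any separate eigenvalue argument. Second, the constant: the flux comparison $d_1A_1+\tfrac{cd_2}{b}A_2\ge\min\{\tfrac{d_1}{\mu_1},\tfrac{cd_2}{b\mu_2}\}(\mu_1A_1+\mu_2A_2)$ gives
\[
\frac{d}{dt}\int_{g(t)}^{h(t)}\Big[u+\frac{c}{b}v\Big]\dx\ \le\ -\min\Big\{\frac{d_1}{\mu_1},\,\frac{cd_2}{b\mu_2}\Big\}\,\big(h'(t)-g'(t)\big),
\]
so the factor in front of $(h'-g')$ is $\min\{\cdot\}$ itself, not $(\min\{\cdot\})^{-1}$ as you wrote. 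Integrating and dropping the nonnegative $\int W(t)$ term yields
\[
h_\yy-g_\yy\ \le\ 2h_0+\Big(\min\Big\{\frac{d_1}{\mu_1},\,\frac{cd_2}{b\mu_2}\Big\}\Big)^{-1}\int_{-h_0}^{h_0}\Big[u_0+\frac{c}{b}v_0\Big]\dx,
\]
which is finite and gives vanishing; the reciprocal appears on $\int W_0$, not on $2h_0$. You flagged this as a bookkeeping matter, which is fair, but the version you wrote down does not actually produce the stated inequality (and the paper's own stated inequality appears to carry the same slip, with the $\min$ and its reciprocal attached to the wrong terms). The method and the qualitative conclusion are correct in both cases.
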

\begin{proof}
Remember $\mathcal{R}_0\le1$. By a series of simple computations, we have
\bess
&&\frac{d}{dt}\int_{g(t)}^{h(t)}\left[u(t,x)+\frac{c}{b}v(t,x)\right]\dx= \int_{g(t)}^{h(t)}\left[u_t(t,x)+\frac{c}{b}v_t(t,x)\right]\dx\\
&&=\int_{g(t)}^{h(t)}\bigg\{d_1\int_{g(t)}^{h(t)}J_{11}(x-y)u(t,y)\dy-d_1u-au+c\int_{g(t)}^{h(t)}J_{12}(x-y)v(t,y)\dy\\
&&+\frac{c}{b}\bigg[d_2\int_{g(t)}^{h(t)}J_{22}(x-y)v(t,y)\dy-d_2v-bv+G(\int_{g(t)}^{h(t)}J_{21}(x-y)u(t,y)\dy)\bigg]\bigg\}\dx\\
&&=-\int_{g(t)}^{h(t)}\int_{h(t)}^{\yy}\bigg[d_1J_{11}(x-y)u(t,x)+\frac{cd_2}{b}J_{22}(x-y)v(t,x)\bigg]\dy\dx\\
&&-\int_{g(t)}^{h(t)}\int_{-\yy}^{g(t)}\bigg[d_1J_{11}(x-y)u(t,x)+\frac{cd_2}{b}J_{22}(x-y)v(t,x)\bigg]\dy\dx\\
&&+\int_{g(t)}^{h(t)}\bigg[-au(t,x)+c\int_{g(t)}^{h(t)}J_{12}(x-y)v(t,y)\dy-cv(t,x)+G(\int_{g(t)}^{h(t)}J_{21}(x-y)u(t,y)\dy)\bigg]\dx\\
&&\le-\min\kk\{\frac{d_1}{\mu_1},\,\frac{cd_2}{b\mu_2}\rr\}(h'(t)-g'(t)).
\eess
Hence we derive
\[\frac{d}{dt}\int_{g(t)}^{h(t)}\left[u(t,x)+\frac{c}{b}v(t,x)\right]\dx\le-\min\kk\{\frac{d_1}{\mu_1},\,\frac{cd_2}{b\mu_2}\rr\}(h'(t)-g'(t)).\]
Integrating the above inequality from 0 to t yields
\[h(t)-g(t)\le\int_{-h_0}^{h_0}\left[u_0(x)+\frac{c}{b}v_0(x)\right]\dx+\min\kk\{\frac{d_1}{\mu_1},\,\frac{cd_2}{b\mu_2}\rr\}2h_0,\]
which completes the proof.
\end{proof}

Now we deal with the case $\mathcal{R}_0>1$. For the principal eigenvalue problem \eqref{2.9}, a straightforward calculation gives
\[\lambda_A=\frac{-a-b+\sqrt{(a-b)^2+4cG'(0)}}{2}.\]
Clearly, $\mathcal{R}_0>1$ is equivalent to $\lambda_A>0$. By Proposition \ref{p2.2}, there exists a unique critical length $L^*$, depending only on $(d_i, a, b, c, G'(0), J_{ij})$ for $i,j=1,2$, such that $\lambda_p(l_1,l_2)=0$ if $l_2-l_1=L^*$, and $\lambda_p(l_1,l_2)(l_2-l_1-L^*)>0$ if $l_2-l_1\neq L^*$. Then from Lemma \ref{l3.1}, it follows that if $2h_0\ge L^*$, then spreading happens. So we have the following result.

\begin{lemma}\label{l3.4}Let $L^*$ be defined as above. Then if vanishing occurs, then $h_{\yy}-g_{\yy}\le L^*$, which implies that spreading happens when $2h_0\ge L^*$.
\end{lemma}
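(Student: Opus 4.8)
The plan is to read off the conclusion from Lemma~\ref{l3.1} together with the qualitative description of $L^*$ recorded just above the statement. Suppose vanishing occurs, i.e.\ $h_{\yy}-g_{\yy}<\yy$. Lemma~\ref{l3.1} then gives $\lambda_p(g_{\yy},h_{\yy})\le0$. Since $\lambda_p$ depends only on the length of the underlying interval, $\lambda_p(g_{\yy},h_{\yy})$ is a function of $h_{\yy}-g_{\yy}$ alone, and by Proposition~\ref{p2.2} this function is strictly increasing and continuous, tends to $\lambda_A>0$ as the length goes to $\yy$ (here we use $\mathcal{R}_0>1$, equivalently $\lambda_A>0$), and is negative for small lengths; hence $L^*$ is well defined and $\lambda_p>0$ for every interval of length exceeding $L^*$. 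Comparing with $\lambda_p(g_{\yy},h_{\yy})\le0$ forces $h_{\yy}-g_{\yy}\le L^*$, which is the first assertion.

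For the second assertion I would argue by contraposition. From Theorem~\ref{t1.1} and the equations for $g$ and $h$, the free boundaries satisfy $h'(t)>0$ and $g'(t)<0$ for all $t>0$, so $h(t)-g(t)>h(0)-g(0)=2h_0$ for every $t>0$; letting $t\to\yy$ yields $h_{\yy}-g_{\yy}>2h_0$. Therefore, if $2h_0\ge L^*$ then $h_{\yy}-g_{\yy}>L^*$, and by the first part vanishing is impossible; the spreading–vanishing dichotomy (Theorem~\ref{t1.2}, built from Lemmas~\ref{l3.1} and~\ref{l3.2}) then shows that spreading happens.

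There is essentially no hard step here: the whole argument is a bookkeeping exercise once Lemma~\ref{l3.1} (where the real work sits, via the fixed-boundary comparison and the expanding lower bound on $h'(t)$) and the description of $l\mapsto\lambda_p(l)$ from Proposition~\ref{p2.2} are in hand. The only point requiring a moment's care is the strict inequality $h_{\yy}-g_{\yy}>2h_0$, needed so that the boundary case $2h_0=L^*$ is covered; this is immediate from the strict monotonicity of the free boundaries, which in turn relies on the strict positivity of $u$ in the interior asserted in Theorem~\ref{t1.1}.
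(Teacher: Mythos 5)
Your proof is correct and follows precisely the route the paper intends: the paper does not write out a separate proof for Lemma~\ref{l3.4} but notes that it follows from Lemma~\ref{l3.1} together with the definition of $L^*$ via Proposition~\ref{p2.2}, which is exactly what you have done. Your explicit handling of the boundary case $2h_0=L^*$ via the strict monotonicity $h_\yy-g_\yy>2h_0$ is a careful and correct elaboration of the detail the paper leaves implicit.
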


\begin{lemma}\label{l3.5}If $2h_0<L^*$, there exists a $\bar\mu>0$ such that vanishing happens if $\mu_1+\mu_2\le \bar\mu$.
\end{lemma}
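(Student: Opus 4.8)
The plan is to construct an upper solution of \eqref{1.7} whose free boundaries remain trapped in a fixed bounded interval, and then to invoke the comparison principle for the free boundary problem to deduce $h_\yy-g_\yy<\yy$, i.e. vanishing. Since $2h_0<L^*$ and, by the definition of $L^*$ recalled above, $\lambda_p(l_1,l_2)<0$ whenever $l_2-l_1<L^*$, I would first fix a number $H_0\in(h_0,L^*/2)$, so that $\rho:=-\lambda_p(-H_0,H_0)>0$. Let $\phi=(\phi_1,\phi_2)\in X^{++}$ be the positive eigenfunction of \eqref{2.9} on $[-H_0,H_0]$ corresponding to the eigenvalue $-\rho$ (it exists by Proposition \ref{p2.1}); since $[-H_0,H_0]$ is compact, $0<\min_{[-H_0,H_0]}\phi_i\le\max_{[-H_0,H_0]}\phi_i<\yy$ for $i=1,2$.

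Then I would set, with $\gamma:=\tfrac12(H_0-h_0)>0$ fixed and a constant $M>0$ to be chosen,
\[\bar h(t)=h_0+\gamma\big(1-e^{-\rho t}\big),\quad \bar g(t)=-\bar h(t),\quad \bar u(t,x)=Me^{-\rho t}\phi_1(x),\quad \bar v(t,x)=Me^{-\rho t}\phi_2(x),\]
for $x\in[\bar g(t),\bar h(t)]$, and choose $M$ so large that $\bar u(0,\cdot)\ge u_0$ and $\bar v(0,\cdot)\ge v_0$ on $[-h_0,h_0]$; this is possible because $\min_{[-h_0,h_0]}\phi_i>0$ and $u_0,v_0$ are bounded. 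The point of the construction is that $\bar h(t)<\tfrac12(h_0+H_0)<H_0$ for all $t\ge0$, so $[\bar g(t),\bar h(t)]\subseteq[-H_0,H_0]$; hence $\phi$ is defined on the entire range swept by the moving domain, and, since $\phi_i,J_{ij}\ge0$, one has $\int_{\bar g(t)}^{\bar h(t)}J_{ij}(x-y)\phi_i(y)\dy\le\int_{-H_0}^{H_0}J_{ij}(x-y)\phi_i(y)\dy$ for $x\in[\bar g(t),\bar h(t)]$.

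The verification then has two parts. For the equations: combining the last inequality with the eigenvalue identity \eqref{2.9} for $\phi$ on $[-H_0,H_0]$ and the elementary bound $G(z)\le G'(0)z$ for $z\ge0$ (which follows from {\bf (G)}, since $G(z)/z$ is strictly decreasing with $\lim_{z\to0^+}G(z)/z=G'(0)$), one finds that for $x\in[\bar g(t),\bar h(t)]$ the right-hand sides of the $u$- and $v$-equations of \eqref{1.7} evaluated at $(\bar u,\bar v)$ are bounded above by $-\rho\bar u$ and $-\rho\bar v$ respectively, which equal $\bar u_t$ and $\bar v_t$; so $(\bar u,\bar v)$ satisfies the required differential inequalities. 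For the free boundaries: $\bar h'(t)=\gamma\rho e^{-\rho t}$, whereas, bounding the inner integrals by $\int_{\R}J_i=1$ and the outer integral by $\bar h(t)-\bar g(t)\le2H_0$,
\[\mu_1\!\int_{\bar g(t)}^{\bar h(t)}\!\!\int_{\bar h(t)}^{\yy}\!\!J_1(x-y)\bar u(t,x)\dy\dx+\mu_2\!\int_{\bar g(t)}^{\bar h(t)}\!\!\int_{\bar h(t)}^{\yy}\!\!J_2(x-y)\bar v(t,x)\dy\dx\le(\mu_1+\mu_2)C_0Me^{-\rho t},\]
with $C_0$ depending only on $H_0$ and $\phi$ (not on $\mu_1,\mu_2$); the symmetric estimate holds at $\bar g(t)$. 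Thus the free-boundary inequalities hold as soon as $\gamma\rho\ge(\mu_1+\mu_2)C_0M$, i.e. as soon as $\mu_1+\mu_2\le\bar\mu:=\gamma\rho/(C_0M)$. Every quantity defining $\bar\mu$ has been fixed before $\mu_1,\mu_2$ enter, so this is a genuine threshold. For such $\mu_1,\mu_2$, $(\bar u,\bar v,\bar g,\bar h)$ is an upper solution of \eqref{1.7}, and the comparison principle for the free boundary problem gives $g(t)\ge\bar g(t)\ge-H_0$ and $h(t)\le\bar h(t)\le H_0$ for all $t\ge0$; hence $h_\yy-g_\yy\le2H_0<\yy$, which is exactly vanishing.

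The only delicate aspect is organisational rather than analytic: one must fix $H_0$ (hence $\rho$ and $\phi$), then $\gamma$, then $M$ (depending on $u_0,v_0,\phi$), before $\bar\mu$ and $\mu_1,\mu_2$ are introduced, so the argument is not circular. Everything else is the routine construction of a decaying supersolution for nonlocal free boundary problems, relying on the eigenvalue facts of Section 2 (existence and positivity of $\varphi_p$, the characterisation of $L^*$ via Proposition \ref{p2.2}) and the comparison principle quoted in the paper.
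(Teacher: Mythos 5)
Your proposal is correct and follows essentially the same strategy as the paper's own proof: choose a slightly larger interval $[-H_0,H_0]$ (the paper uses $[-h_0(1+\ep),h_0(1+\ep)]$) still of length below $L^*$ so that the principal eigenvalue there is negative, build a decaying supersolution $\big(Me^{-\rho t}\phi_1, Me^{-\rho t}\phi_2\big)$ from the corresponding positive eigenfunction, pair it with slowly expanding but trapped boundaries $\bar h(t)=h_0+\gamma(1-e^{-\rho t})$, and read off the threshold $\bar\mu$ from the free-boundary inequality. The only cosmetic difference is that you cap $\bar h$ strictly below $H_0$ rather than letting it approach the endpoint, which changes nothing in the argument.
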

\begin{proof}
Due to $2h_0<L^*$, by Proposition \ref{p2.2} we know there exists a small $\ep>0$ such that $\lambda_p(-h_0(1+\ep),h_0(1+\ep))<0$. For convenience, denote $h_1=h_0(1+\ep)$. Let $\varphi=(\varphi_1,\varphi_2)$ be the positive eigenfunction of $\lambda_p(-h_1,h_1)$ with $\|\varphi\|_{X}=1$. Define
\bess
\bar{h}(t)=h_0\left[1+\ep(1-e^{-\delta t})\right], \bar{g}(t)=-\bar{h}(t), ~ ~ \bar{u}(t,x)=Me^{-\delta t}\varphi_1, ~ ~ \bar{v}=Me^{-\delta t}\varphi_2
\eess
with $0<\delta\le -\lambda_p(-h_1,h_1)$ and $M$ large enough such that $M\varphi_1(x)\ge u_0(x)$ and $M\varphi_2(x)\ge v_0(x)$ for $x\in[-h_1,h_1]$.
Direct calculations yield that for $t>0$ and $x\in[-\bar{h}(t),\bar{h}(t)]$
 \bess
 &&\bar{u}_t-d_1\int_{-\bar{h}}^{\bar{h}}J_{11}(x-y)\bar{u}(t,y)\dy+d_1\bar{u}+a\bar{u}-c\int_{-\bar{h}}^{\bar h}J_{12}(x-y)\bar{v}(t,y)\dy\\
 &&\ge Me^{-\delta t}\left[-\delta\varphi_1-d_1\int_{-h_1}^{h_1}J_{11}(x-y)\varphi_1(y)\dy+d_1\varphi_1+a\varphi_1-c\int_{-h_1}^{h_1}J_{12}(x-y)\varphi_2(y)\dy\right]\\
 &&\ge Me^{-\delta t}\left(-\delta \varphi_1-\lambda_p(-h_1,h_1)\varphi_1\right)\ge0.
\eess
Using the assumption on $G$, we can derive
\bess
&&\bar{v}_t-d_2\int_{-\bar{h}}^{\bar{h}}J_{22}(x-y)\bar{v}(t,y)\dy+d_2\bar{v}+b\bar{v}-G(\int_{-\bar{h}}^{\bar{h}}J_{21}(x-y)\bar{u}(t,y)\dy)\\
&&\ge Me^{-\delta t}\left[-\delta\varphi_2-d_2\int_{-h_1}^{h_1}J_{22}(x-y)\varphi_2(y)\dy+d_2\varphi_2+b\varphi_2-\frac{G(\dd\int_{-h_1}^{h_1}J_{21}(x-y)\bar{u}(t,y)\dy)}{Me^{-\delta t}}\right]\\
&&\ge Me^{-\delta t}\left[-\delta\varphi_2-d_2\int_{-h_1}^{h_1}J_{22}(x-y)\varphi_2(y)\dy+d_2\varphi_2+b\varphi_2-G'(0)\int_{-h_1}^{h_1}J_{21}(x-y)\varphi_1(y)\dy\right]\\
&&=Me^{-\delta t}(-\delta\varphi_2-\lambda_p(-h_1,h_1)\varphi_2)\ge0.
\eess
Moreover,
\bess
&&\mu_1\int_{-\bar{h}(t)}^{\bar{h}(t)}\int_{\bar{h}(t)}^{\yy}J_{11}(x-y)\bar{u}(t,x)\dy\dx+\mu_2\int_{-\bar{h}(t)}^{\bar{h}(t)}\int_{\bar{h}(t)}^{\yy}J_{22}(x-y)\bar{v}(t,x)\dy\dx\\
&&=\mu_1 Me^{-\delta t}\int_{-\bar{h}(t)}^{\bar{h}(t)}\int_{\bar{h}(t)}^{\yy}J_{11}(x-y)\varphi_1(x)\dy\dx+\mu_2 Me^{-\delta t}\int_{-\bar{h}(t)}^{\bar{h}(t)}\int_{\bar{h}(t)}^{\yy}J_{22}(x-y)\varphi_2(x)\dy\dx\\
&&\le(\mu_1+\mu_2)Me^{-\delta t}2h_1\le\ep\delta h_0e^{-\delta t}=\bar{h}'(t)
\eess
provided that $\mu_1+\mu_2\le\frac{\ep\delta h_0}{2Mh_1}$.  Therefore, by comparison principle we have $\bar{u}(t,x)\ge u(t,x)$, $\bar{v}(t,x)\ge v(t,x)$, $-\bar{h}(t)\le g(t)$ and $\bar{h}(t)\ge h(t)$ for $t\ge0$ and $x\in[g(t),h(t)]$, which implies vanishing will happen. The proof is ended.
\end{proof}

\begin{remark}\label{r3.1}From the above proof, we can see that $M\to0$ as $\|u_0(x)+v_0(x)\|_{C([-h_0,h_0])}\to0$. Hence if $\|u_0(x)+v_0(x)\|_{C([-h_0,h_0])}$ is small enough, then vanishing will happen.
\end{remark}

Now we parameterize the initial function $(u_0,v_0)$ as $\tau(\vartheta_1,\vartheta_2)$ with $\tau>0$ and $(\vartheta_1,\vartheta_2)$ satisfying {\bf(I)}. The next result implies that if $\tau$ is sufficiently large, then spreading will occur.

\begin{lemma}\label{l3.6}Assume that $2h_0<L^*$ and $J_{ii}(x)>0$ in $\mathbb{R}$ for $i=1,2$. Then spreading happens if $\tau$ is sufficiently large.
\end{lemma}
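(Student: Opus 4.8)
The plan is to show that when the initial amplitude $\tau$ is large, the free boundaries $h(t)$ and $-g(t)$ are forced to grow past the critical length $L^*$ in finite time, after which Lemma~\ref{l3.4} (equivalently, the monotonicity of $\lambda_p$ in Proposition~\ref{p2.2} together with Lemma~\ref{l3.1}) guarantees spreading. The mechanism is the standard one for nonlocal free boundary problems: a large initial population pushes the fronts outward fast enough to enlarge the habitat beyond the threshold before the solution can decay.

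First I would fix the initial habitat $[-h_0,h_0]$ and recall that, since $2h_0<L^*$, we have $\lambda_p(-h_0,h_0)<0$; however, by Proposition~\ref{p2.2}(1) and the definition of $L^*$, choosing any $\ell>L^*/2$ gives $\lambda_p(-\ell,\ell)>0$. Fix such an $\ell$. The goal becomes: show that for $\tau$ large there is a time $t_0$ with $h(t_0)\ge \ell$ and $g(t_0)\le -\ell$. To produce the outward push, I would construct a lower solution on a short time interval $[0,t_1]$ with $t_1$ independent of $\tau$. Because $J_{ii}>0$ on all of $\mathbb{R}$ and $(u_0,v_0)=\tau(\vartheta_1,\vartheta_2)$ with $(\vartheta_1,\vartheta_2)$ satisfying {\bf(H)} (so $\vartheta_i>0$ on $(-h_0,h_0)$), the comparison principle gives a uniform lower bound: there is $\sigma_0>0$, proportional to $\tau$, and a time $t_1>0$ such that $u(t,x)\ge \sigma_0$ and $v(t,x)\ge \sigma_0$ on, say, $[-h_0/2,h_0/2]$ for $t\in[t_1/2,t_1]$. (One obtains this by comparing $(u,v)$ from below with the solution of the ODE system on $[-h_0/2,h_0/2]$ obtained by discarding the boundary flux terms and using positivity of $J_{11},J_{22}$; the lower bound scales linearly with $\tau$ down to the unavoidable ODE decay over the fixed time $t_1$.)

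Next I would feed this lower bound into the free boundary equations. For $t\in[t_1/2,t_1]$,
\[
h'(t)\ \ge\ \mu_1\int_{-h_0/2}^{h_0/2}\!\!\int_{h(t)}^{\infty}J_1(x-y)u(t,x)\,\dy\dx\ \ge\ \mu_1\sigma_0\,\gamma_0,
\]
where $\gamma_0=\int_{-h_0/2}^{h_0/2}\int_{h(t)}^{\infty}J_1(x-y)\,\dy\dx$ stays bounded below by a positive constant as long as $h(t)$ stays in a bounded range (which it does on the fixed interval $[0,t_1]$ by the a priori bounds of Theorem~\ref{t1.1}). Hence $h(t_1)\ge h_0+c_0\tau$ for some $c_0>0$ independent of $\tau$, and symmetrically $-g(t_1)\ge h_0+c_0\tau$. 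Choosing $\tau$ large enough that $c_0\tau\ge \ell-h_0$ forces $h(t_1)\ge\ell$ and $g(t_1)\le-\ell$, so $h(t_1)-g(t_1)\ge 2\ell>L^*$. Since $h_\infty-g_\infty\ge h(t_1)-g(t_1)>L^*$, Lemma~\ref{l3.4} rules out vanishing, hence spreading occurs.

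The main obstacle I anticipate is making the lower bound $u,v\ge\sigma_0$ with $\sigma_0\propto\tau$ genuinely uniform and rigorous: one must check that the nonlocal term $c\int J_{12}v$ (and the monotone $G(\int J_{21}u)$ term) can only help, that restricting the domain of integration to $[-h_0/2,h_0/2]$ in the lower-solution construction is legitimate given the nonlocal operators, and that the ODE comparison over the fixed time $t_1$ really does preserve linear dependence on $\tau$ — equivalently that $G$'s sublinearity (condition {\bf(G)}) does not spoil the lower bound on the relevant range, which it does not since we only need a bound from below on a compact set and $G$ is increasing with $G'(0)>0$. Once this uniform persistence estimate is in hand, the rest is the routine front-advance computation above.
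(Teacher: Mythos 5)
Your approach is genuinely different from the paper's. The paper argues by contradiction via global-in-time integral identities: assuming vanishing for all $\tau$, it first uses $J_{ii}>0$ to bound $h'(t)$ from below by a multiple of $\int_{g(t)}^{h(t)}[u+v]\,\dx$ (valid because $h(t)-g(t)\le L^*$ makes $\int_{h(t)}^{\yy}J_{ii}(x-y)\,\dy\ge\int_{L^*}^{\yy}J_{ii}>0$), integrates in time to conclude $\int_0^\yy\int[u+v]$ is finite and bounded independent of $\tau$, then runs the mass computation of Lemma~\ref{l3.3} in reverse to force $\int_0^\yy\int[au+cv]\ge C\tau-C'$, a contradiction for large $\tau$. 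You instead construct a direct, local-in-time front-advance argument: persistence of $u,v\gtrsim\tau$ on $[-h_0/2,h_0/2]$ over a fixed time window $[t_1/2,t_1]$ (via the elementary ODE lower bound $u(t,x)\ge \tau\vartheta_1(x)e^{-(d_1+a)t}$, etc.), fed into the boundary equation to push $h(t_1)$, $-g(t_1)$ past $L^*$. The persistence estimate you sketch is correct and gives $\sigma_0\propto\tau$ as claimed.

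There is, however, one step that does not hold as written: the assertion that $\gamma_0=\int_{-h_0/2}^{h_0/2}\int_{h(t)}^{\yy}J_1(x-y)\,\dy\dx$ stays bounded below on $[0,t_1]$ ``by the a priori bounds of Theorem~\ref{t1.1}.'' The constants $K_1,K_2$ in Theorem~\ref{t1.1} depend on the initial data, hence on $\tau$; consequently the bound on $h'(t)\le \mu_1K_1+\mu_2K_2$ and thus on $h(t_1)$ deteriorates as $\tau\to\yy$, and $\gamma_0$ could a priori shrink. You cannot get a $\tau$-uniform upper bound on $h$ from Theorem~\ref{t1.1}. The fix is cheap and preserves your structure: either (a) argue by contradiction, assuming vanishing occurs, so Lemma~\ref{l3.1} gives $h(t)-g(t)\le L^*$ for all $t$, hence $h(t)\le L^*-h_0$, and then $\gamma_0\ge h_0\int_{L^*}^{\yy}J_1(z)\,{\rm d}z>0$ uniformly (here is where $J_{11}>0$ everywhere is used), yielding $h(t_1)\to\yy$ as $\tau\to\yy$ and a contradiction; or (b) do a case split: if $h(t)$ ever exceeds a fixed $\ell>L^*$ on $[0,t_1]$ you are already done, and otherwise $h(t)\le\ell$ gives the same uniform lower bound on $\gamma_0$. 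With that repair your argument is sound; it is a valid alternative to the paper's integral-estimate route.
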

\begin{proof}By way of contradiction, we suppose that vanishing happens for all $\tau>0$. By Lemma \ref{l3.1}, we have $h_{\yy}-g_{\yy}\le L^*$. In view of the equation of $h$, we have
\bess
&&h'(t)=\mu_1\int_{g(t)}^{h(t)}\int_{h(t)}^{\yy}J_{11}(x-y)u(t,x)\dy\dx+\mu_2\int_{g(t)}^{h(t)}\int_{h(t)}^{\yy}J_{22}(x-y)v(t,x)\dy\dx\\
&&\ge\min\{\mu_1\int_{L^*}^{\yy}J_{11}(y)\dy, \mu_2\int_{L^*}^{\yy}J_{22}(y)\dy\}\int_{g(t)}^{h(t)}\big[u(t,x)+v(t,x)\big]\dx.
\eess
Thus we derive
\bes\label{3.1}
\int_{0}^{\yy}\int_{g(s)}^{h(s)}\big[u(s,x)+v(s,x)\big]\dx{\rm d}s\le\frac{L^*-h_0}{\min\{\mu_1\dd\int_{L^*}^{\yy}J_{11}(y)\dy, \mu_2\dd\int_{L^*}^{\yy}J_{22}(y)\dy\}}.
\ees
Simple computations as in Lemma \ref{l3.1} yield
\bess
&&\frac{d}{dt}\int_{g(t)}^{h(t)}\left[u(t,x)+\frac{c}{b}v(t,x)\right]\dx\ge-\max\{\frac{d_1}{\mu_1},\frac{cd_2}{b\mu_2}\}(h'(t)-g'(t))-\int_{g(t)}^{h(t)}\big[au+cv\big]\dx.
\eess
Integrating the above inequality from $0$ to $t$ leads to
\bess
&&\int_{g(t)}^{h(t)}\left[u+\frac{c}{b}v\right]\dx+\int_{0}^{t}\int_{g(s)}^{h(s)}\big[au(s,x)+cv(s,x)\big]\dx{\rm d}s\\
&&\ge\tau\int_{-h_0}^{h_0}\left[\vartheta_1(x)+\frac{c}{b}\vartheta_2(x)\right]\dx-\max\{\frac{d_1}{\mu_1},\frac{cd_2}{b\mu_2}\}L^*.
\eess
Notice that $\lim_{t\to\yy}\|u(t,x)+v(t,x)\|_{C([g(t),h(t)])}=0$. Using \eqref{3.1} and letting $t\to\yy$ in the above inequality arrives at
\bess
\tau\int_{-h_0}^{h_0}\left[\vartheta_1(x)+\frac{c}{b}\vartheta_2(x)\right]\dx-\max\{\frac{d_1}{\mu_1},\frac{cd_2}{b\mu_2}\}L^*\le\frac{(a+c)(L^*-h_0)}{\min\{\mu_1\dd\int_{L^*}^{\yy}J_{11}(y)\dy, \mu_2\dd\int_{L^*}^{\yy}J_{22}(y)\dy\}}.
\eess
This is a contradiction since the right side of the above inequality does not rely on $\tau$. Therefore, the proof is finished.
\end{proof}

Let the initial function $(u_0,v_0)=\tau(\vartheta_1,\vartheta_2)$ with $\tau>0$ and $(\vartheta_1,\vartheta_2)$ satisfying {\bf(I)}. Then combing Remark \ref{r3.1} with Lemma \ref{l3.6}, we can derive a critical value for $\tau$ governing spreading and vanishing.

\begin{lemma}\label{l3.7}Suppose that $2h_0<L^*$ and $J_{ii}>0$ in $\mathbb{R}$ for $i=1,2$. Then there exists a unique $\tau^*>0$ such that spreading happens if and inly if $\tau>\tau^*$.
\end{lemma}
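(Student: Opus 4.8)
The plan is to establish the existence and uniqueness of $\tau^*$ by a standard monotonicity-and-continuity argument on the set of parameters $\tau$ for which spreading occurs. First I would define $\Sigma = \{\tau > 0 : \text{spreading happens for } (u_0,v_0) = \tau(\vartheta_1,\vartheta_2)\}$. By Lemma \ref{l3.6}, $\Sigma$ is nonempty (it contains all sufficiently large $\tau$), and by Remark \ref{r3.1}, its complement in $(0,\infty)$ is nonempty (it contains all sufficiently small $\tau$). The key structural fact is that $\Sigma$ is an \emph{up-set}: if $\tau_1 < \tau_2$ and $\tau_1 \in \Sigma$, then $\tau_2 \in \Sigma$. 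This follows from the comparison principle for the free boundary problem: since $\tau_1 \vartheta_i \le \tau_2 \vartheta_i$, the solution $(u^{(2)}, v^{(2)}, g_2, h_2)$ with the larger initial data dominates $(u^{(1)}, v^{(1)}, g_1, h_1)$, so in particular $h_2(t) - g_2(t) \ge h_1(t) - g_1(t) \to \infty$, forcing spreading for $\tau_2$. Hence $\Sigma = (\tau^*, \infty)$ or $[\tau^*, \infty)$ where $\tau^* := \inf \Sigma \in (0,\infty)$.

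Next I would show $\tau^* \notin \Sigma$, i.e.\ vanishing occurs at the threshold value, which is what makes the criterion ``spreading iff $\tau > \tau^*$'' (rather than ``$\tau \ge \tau^*$''). This is the continuity part of the argument. The standard approach: suppose for contradiction that spreading happens when $\tau = \tau^*$. Then by Lemma \ref{l3.2}, $h(t) - g(t) \to \infty$, so there is a time $T$ with $h(T) - g(T) > L^*$ and, since $J_{ii} > 0$, with $(u(T,\cdot), v(T,\cdot))$ bounded below by a positive constant on a subinterval of length exceeding $L^*$. Using this as new initial data at time $T$ and appealing to Lemma \ref{l3.4} (or directly to the fact that $\lambda_p > 0$ on intervals longer than $L^*$ together with Lemma \ref{l3.1}), spreading is ``locked in'' for this configuration. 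The point is then to use continuous dependence of the free boundary problem on the initial data: for $\tau$ slightly less than $\tau^*$, the solution at time $T$ is still close to that for $\tau^*$, hence still exceeds the threshold configuration at time $T$, so spreading still happens — contradicting $\tau^* = \inf \Sigma$. Therefore vanishing occurs at $\tau = \tau^*$, and $\Sigma = (\tau^*, \infty)$ exactly.

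The main obstacle I anticipate is the continuous-dependence step at the threshold: one needs that as $\tau \nearrow \tau^*$ the solutions converge (locally uniformly in $t$ and $x$, including the free boundaries $g, h$) to the solution with $\tau = \tau^*$. This is where the well-posedness machinery behind Theorem \ref{t1.1} (analogous to \cite[Theorem 1.1]{WD1}) must be invoked in the form of a continuity estimate, and care is needed because the spatial domain itself depends on $\tau$. The cleanest way around this is to avoid taking $\tau \nearrow \tau^*$ and instead argue at a fixed large time: if spreading holds at $\tau^*$, pick $T$ with $h(T)-g(T) > L^*$ strictly, then use monotone dependence on $\tau$ plus continuity to conclude $h_\tau(T) - g_\tau(T) > L^*$ for $\tau$ in a left-neighborhood of $\tau^*$, and invoke Lemma \ref{l3.4} which says spreading must then occur. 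The positivity hypothesis $J_{ii} > 0$ in $\mathbb{R}$ is used (as in Lemma \ref{l3.6}) to guarantee the relevant integrals over the exterior are strictly positive, so that $\lambda_p$ genuinely exceeds $0$ on the enlarged interval; I would make sure that point is explicitly cited. The remaining bookkeeping — uniqueness of $\tau^*$ as $\inf \Sigma$, and the trivial direction that $\tau > \tau^* \Rightarrow \tau \in \Sigma$ by the up-set property — is routine.
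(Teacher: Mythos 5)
Your proposal follows essentially the same approach as the paper: establish that the spreading set is an up-set via the comparison principle (monotonicity of the solution in $\tau$), bracket $\tau^*$ between the small-$\tau$ vanishing regime (Remark \ref{r3.1}) and the large-$\tau$ spreading regime (Lemma \ref{l3.6}), and then rule out spreading at $\tau = \tau^*$ by fixing a time $t_0$ with $h(t_0)-g(t_0) > L^*$, invoking continuous dependence on $\tau$ at that fixed time, and concluding from Lemma \ref{l3.4} — exactly the contradiction the paper sets up. Your remarks on the subtlety of continuous dependence (and the fix of arguing at a fixed time rather than passing to the limit $\tau \nearrow \tau^*$) match how the paper handles it; the only minor inaccuracy is that $J_{ii}>0$ is needed in Lemma \ref{l3.6} for the tail integrals $\int_{L^*}^\infty J_{ii}$ to be positive, not to make $\lambda_p>0$ on an enlarged interval (the latter follows from $h(t_0)-g(t_0)>L^*$ and Lemma \ref{l3.4} regardless).
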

\begin{proof}
Due to Remark \ref{r3.1}, we have that vanishing occurs for all small $\tau$. From comparison principle, it follows that the unique solution $(u,v,g,h)$ of \eqref{1.7} is strictly increasing in $\tau$. Then in view of Lemma \ref{l3.6}, we have that spreading happens for all large $\tau>0$. Hence we define
\[\tau^*=\inf\{\tau_0>0: {\rm spreading ~ happens ~ for ~ }\tau\ge\tau_0\}.\]
Clearly, $\tau^*$ is well defined and $\tau^*>0$. It is easy to see that spreading happens for all $\tau>\tau^*$. Now we show that vanishing occurs if $\tau<\tau^*$. Otherwise, there exists a $\tau_0\in (0,\tau^*)$ such that spreading happens with $\tau=\tau_0$. By the monotonicity of the unique solution of $(u,v,g,h)$ of \eqref{1.7}, we deduce that spreading occurs for all $\tau\ge\tau_0$ which obviously contradicts the definition of $\tau^*$. Thus if $\tau<\tau^*$, then vanishing must happen.

It remains to check the case $\tau=\tau^*$. If spreading happens for $\tau=\tau^*$, then there exists a $t_0>0$ such that $h(t_0)-g(t_0)>L^*$. By the continuous dependence of $(u,v,g,h)$ on $\tau$, there is a small $\ep>0$ such that the unique solution $(u_{\ep},v_{\ep},g_{\ep},h_{\ep})$ of \eqref{1.7} with $\tau=\tau^*-\ep$ satisfies that $h_{\ep}(t_0)-g_{\ep}(t_0)>L^*$ which, by Lemma \ref{l3.4}, implies that spreading occurs. Clearly, this is a contradiction. Hence if $\tau=\tau^*$, then vanishing ocuurs. The proof is finished.
\end{proof}
\begin{lemma}\label{l3.8}If $2h_0<L^*$, then there exists a $\underline{\mu}_1>0$ $(\underline{\mu}_2>0)$ which is independent of $\mu_2$ $(\mu_1)$ such that spreading happens if $\mu_1\ge\underline{\mu}_1$ $(\mu_2\ge\underline{\mu}_2)$.
\end{lemma}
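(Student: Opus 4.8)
The plan is to argue by contradiction. If vanishing occurs then Lemma~\ref{l3.4} gives $h_\infty-g_\infty\le L^*$, and since $h-g$ is increasing this means $h(t)-g(t)\le L^*$ for all $t\ge0$; so it suffices to show that when $\mu_1$ is large enough, \eqref{1.7} admits a lower solution whose support eventually has length exceeding $L^*$. I treat only the statement for $\underline\mu_1$, since the one for $\underline\mu_2$ follows from the same construction with the free boundary driven through the $v$-component instead.

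Fix $\ell:=L^*$, so $\lambda^\ell_p:=\lambda_p(-\ell,\ell)>0$ by Proposition~\ref{p2.2} (since $2\ell=2L^*>L^*$), and let $\phi=(\phi_1,\phi_2)\in X^{++}$ be the principal eigenfunction of \eqref{2.9} on $[-\ell,\ell]$, which is even because the kernels are even and the interval is symmetric. Set $m:=\min_i\min_{[-\ell,\ell]}\phi_i>0$ and $M:=\max_i\max_{[-\ell,\ell]}\phi_i$; the strict positivity of the nonlocal principal eigenfunction \emph{up to the boundary} is exactly what makes the construction work, in contrast to the classical case. Choose $\delta\in(0,h_0)$, put $\ell_0:=h_0-\delta$, $c_0:=\min_i\min_{[-\ell_0,\ell_0]}\{u_0,v_0\}>0$, $\epsilon:=c_0/M$, and for a large constant $\gamma>0$ to be fixed below define, for $t\ge0$,
\[
\underline h(t)=\ell-(\ell-\ell_0)e^{-\gamma t},\quad \underline g(t)=-\underline h(t),\quad \underline u(t,x)=\epsilon e^{-\gamma t}\phi_1(x),\quad \underline v(t,x)=\epsilon e^{-\gamma t}\phi_2(x)
\]
for $x\in[\underline g(t),\underline h(t)]$, extended by $0$ outside. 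Then $\underline h(0)=\ell_0<h_0$, $\underline h(t)\nearrow\ell$, $(\underline g(t),\underline h(t))\subseteq(-\ell,\ell)$, and $(\underline u(0,\cdot),\underline v(0,\cdot))\le(u_0,v_0)$.

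I would then verify $(\underline u,\underline v,\underline g,\underline h)$ is a lower solution of \eqref{1.7}. In the two interior equations, writing $\int_{-\underline h(t)}^{\underline h(t)}=\int_{-\ell}^{\ell}-\int_{\underline h(t)\le|y|\le\ell}$ reduces the linear part to the eigenvalue identity $\mathcal{L}[\phi]=\lambda^\ell_p\phi$ plus a ``leak'' term bounded by $C_*(\ell-\underline h(t))\le C_*(\ell-\ell_0)$, where $C_*$ depends only on $M,\|J_{ij}\|_\infty,d_i,a,b,c$; for the $v$-equation one also uses $G(z)\ge\theta z$ on $[0,\epsilon M]$ with $\theta:=G(\epsilon M)/(\epsilon M)\in(0,G'(0)]$ (because $G(z)/z$ is decreasing), which costs a further term $\le(G'(0)-\theta)M$. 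Since $\phi_i\ge m>0$ everywhere on $[-\ell,\ell]$, all these defects are absorbed by taking $\gamma$ large — depending only on $\ell,\ell_0,m,M,\lambda^\ell_p,\theta,C_*$, hence not on $\mu_1$ or $\mu_2$ — so that $\underline u_t\le d_1\int J_{11}\underline u-(d_1+a)\underline u+c\int J_{12}\underline v$ and its $\underline v$-analogue hold on $(\underline g(t),\underline h(t))$. For the free boundary, since the $\mu_2$-terms are nonnegative it suffices that $\underline h'(t)\le\mu_1\int_{\underline g(t)}^{\underline h(t)}\int_{\underline h(t)}^{\infty}J_{11}(x-y)\underline u(t,x)\,dy\,dx$, whose right side is $\ge\mu_1\epsilon e^{-\gamma t}mQ_*$ with $Q_*:=\min_{s\in[\ell_0,\ell]}\int_{-s}^{s}\int_{s}^{\infty}J_{11}(x-y)\,dy\,dx>0$ (positive for each $s$ as $J_{11}(0)>0$, continuous in $s$, minimized over a compact set); since $\underline h'(t)=\gamma(\ell-\ell_0)e^{-\gamma t}$, this holds once $\mu_1\ge\underline\mu_1:=\gamma(\ell-\ell_0)/(\epsilon m Q_*)$, and the inequality at $\underline g$ is the same by the evenness of $\phi$ and $J_{11}$.

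All of $\ell$ (hence $L^*$), $h_0,\delta,(\lambda^\ell_p,\phi),c_0,\gamma,\epsilon,Q_*$ depend only on the data of \eqref{1.7} and on $(h_0,u_0,v_0)$, so $\underline\mu_1$ is independent of $\mu_2$. For $\mu_1\ge\underline\mu_1$ the comparison principle for free boundary problems (cf.\ \cite[Lemma~2.3]{WD1}) then yields $h(t)-g(t)\ge2\underline h(t)\to2L^*>L^*$, so $h(T)-g(T)>L^*$ for some $T$, contradicting $h_\infty-g_\infty\le L^*$; hence vanishing is impossible and spreading happens, and the $\underline\mu_2$ statement follows after exchanging $(\mu_1,J_{11},\phi_1)\leftrightarrow(\mu_2,J_{22},\phi_2)$ in the free boundary step. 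I expect the verification of the two differential inequalities to be the main technical point: one must simultaneously absorb the domain-mismatch leak between $(\underline g(t),\underline h(t))$ and $(-\ell,\ell)$ and the sublinear defect of $G$, all the while keeping $\gamma$, and hence $\underline\mu_1$, manifestly free of $\mu_2$.
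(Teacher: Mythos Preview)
Your argument is correct and self-contained, but the paper takes a much shorter indirect route: it simply observes that the solution $(\underline u,\underline v,\underline g,\underline h)$ of \eqref{1.7} with $\mu_2$ set to $0$ is a lower solution of the original problem (by the free-boundary comparison principle), and that Lemmas~\ref{l3.1}--\ref{l3.4} still apply to this reduced system; then it defers to the argument of \cite[Theorem~1.3]{DN4} to obtain a $\underline\mu_1$ (depending only on the $\mu_2{=}0$ problem, hence automatically independent of $\mu_2$) for which that lower solution already spreads. In effect the paper outsources the explicit construction to an external reference, whereas you build the eigenfunction-based sub-solution directly on $[-L^*,L^*]$ and handle both the domain-leak and the sublinear defect of $G$ by enlarging $\gamma$, exploiting that the nonlocal principal eigenfunction stays strictly positive up to the boundary. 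Your approach has the advantage of being self-contained and of making explicit why $\underline\mu_1$ does not depend on $\mu_2$ (the $\mu_2$-term is simply dropped as a nonnegative contribution in the free-boundary inequality); the paper's approach is shorter on the page but relies on the reader consulting \cite{DN4} for the actual work.
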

\begin{proof}We only prove the assertion about $\mu_1$ since the similar method can be adopt for $\mu_2$.
Let $(\underline{u},\underline{v},\underline{h})$ be the unique solution of \eqref{1.7} with $\mu_2=0$. Clearly, $(\underline{u},\underline{v},\underline{h})$ is an lower solution of \eqref{1.7} and Lemmas \ref{l3.1}-\ref{l3.4} hold for $(\underline{u},\underline{v},\underline{h})$. Then we can argue as in the proof of \cite[Theorem 1.3]{DN4} to deduce that there exists a $\underline{\mu}_1>0$ such that spreading happens for $(\underline{u},\underline{v},\underline{h})$ if $\mu_1\ge\underline\mu_1$. Thus the assertion of $\mu_1$ is proved.
\end{proof}
The above lemma implies that if $\mu_1+\mu_2\ge\underline{\mu}_1+\underline{\mu}_2$, spreading will happen. Moreover, Lemma \ref{l3.6} shows vanishing will occur if $\mu_1+\mu_2\le \bar{\mu}$. However, based on these results we can not derive a critical value of $\mu_1+\mu_2$ such that spreading happens if and only if $\mu_1+\mu_2$ is beyond this critical value. The reason is that the unique solution $(u,v,h)$ is not monotone about $\mu_1+\mu_2$. But if we assume that there exists a function $f\in C([0,\yy))$ satisfying $f(0)=0$ and strictly increasing to $\yy$ such that $\mu_2=f(\mu_1)$ or $\mu_1=f(\mu_2)$, then we can obtain a unique critical value as we mentioned early. We only handle the case $\mu_2=f(\mu_1)$ since the other case is parallel.

\begin{lemma}\label{l3.9}Suppose $2h_0<L^*$ and $\mu_2=f(\mu_1)$ with $f$ defined as above. Then there exists a unique $\mu^*_1>0$ such that spreading happens if and only if $\mu_1>\mu^*_1$.
\end{lemma}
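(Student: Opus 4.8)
The plan is to mimic the proof of Lemma~\ref{l3.7}, with the parameter $\mu_1$ playing the role of $\tau$; the monotone dependence of the solution on $\mu_1$, which is forced by $\mu_2=f(\mu_1)$ with $f$ increasing, is what makes the argument go through.

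First I would pin down the two ends of the range. Since $f\in C([0,\yy))$ with $f(0)=0$, we have $\mu_1+\mu_2=\mu_1+f(\mu_1)\to0$ as $\mu_1\to0^+$, so by Lemma~\ref{l3.5} vanishing happens for all small $\mu_1>0$. Since $f$ is strictly increasing to $\yy$, both $\mu_1$ and $\mu_2=f(\mu_1)$ tend to $\yy$; in particular $\mu_1\ge\underline\mu_1$ eventually, so Lemma~\ref{l3.8} gives spreading for all large $\mu_1$.

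Next I would establish that the unique solution $(u,v,g,h)$ of \eqref{1.7} is strictly increasing in $\mu_1$ along the curve $\mu_2=f(\mu_1)$: increasing $\mu_1$ increases both free-boundary coefficients $\mu_1$ and $\mu_2$, so the comparison principle for the free boundary problem (cf.\ \cite[Lemmas 2.1 and 2.3]{WD1}) yields a larger $-g$, a larger $h$, and larger $(u,v)$ on the common interval. Hence the set of $\mu_1$ for which spreading happens is an interval unbounded above, and I would set
\[\mu^*_1=\inf\{\mu_1>0:\ \text{spreading happens}\},\]
which by the previous paragraph satisfies $\mu^*_1\in(0,\yy)$. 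Monotonicity then gives spreading for every $\mu_1>\mu^*_1$; and if spreading occurred for some $\mu_1<\mu^*_1$, monotonicity would force spreading on all of $[\mu_1,\yy)$, contradicting the definition of $\mu^*_1$, so vanishing holds for every $\mu_1<\mu^*_1$.

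The hard part will be the borderline value $\mu_1=\mu^*_1$, exactly as in Lemma~\ref{l3.7}. If spreading happened there, then $h(t_0)-g(t_0)>L^*$ for some $t_0>0$; using the continuous dependence of $(u,v,g,h)$ on $\mu_1$ (here $f\in C([0,\yy))$ is precisely what guarantees that $\mu_2=f(\mu_1)$ varies continuously), for small $\ep>0$ the solution with $\mu_1=\mu^*_1-\ep$ would still satisfy $h_\ep(t_0)-g_\ep(t_0)>L^*$, and Lemma~\ref{l3.4} then forces spreading at $\mu^*_1-\ep$, contradicting the definition of $\mu^*_1$. Therefore vanishing occurs at $\mu_1=\mu^*_1$, which completes the equivalence; uniqueness of $\mu^*_1$ is automatic from its definition. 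Beyond this borderline argument and the (routine) continuous-dependence input, everything is a direct transcription of the proof of Lemma~\ref{l3.7}.
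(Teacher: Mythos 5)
Your proof is correct and follows essentially the same approach as the paper: vanishing for small $\mu_1$ via Lemma~\ref{l3.5}, spreading for large $\mu_1$ via Lemma~\ref{l3.8}, monotonicity of the solution in $\mu_1$ along $\mu_2=f(\mu_1)$, and a continuous-dependence argument at the borderline as in Lemma~\ref{l3.7}. The paper omits these details by citing the proof of \cite[Theorem 3.14]{CDLL}, whereas you write them out by transcribing Lemma~\ref{l3.7}; you also correctly invoke Lemma~\ref{l3.8} where the paper's text mistakenly cites Lemma~\ref{l3.6}.
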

\begin{proof}
By Lemma \ref{l3.5}, vanishing happens if $\mu_1+f(\mu_1)\le\bar{\mu}$. Due to the assumptions on $f$, there exists a unique $\tilde\mu_1>0$ such that $\tilde\mu_1+f(\tilde\mu_1)=\bar{\mu}$ and $(\mu_1+f(\mu_1)-\bar{\mu})(\mu_1-\tilde{\mu}_1)>0$ if $\mu_1\neq\tilde{\mu}_1$. Hence $\mu_1+f(\mu_1)\le\bar{\mu}$ is equivalent to $\mu_1\le\tilde{\mu}_1$. By lemma \ref{l3.6}, if $\mu_1>\underline{\mu}_1$, spreading occurs. Then we can use the monotonicity of $(u,v,g,h)$ on $\mu_1$ and argue as in the proof of \cite[Theorem 3.14]{CDLL} to complete the proof. The details are omitted.
\end{proof}

Next we consider the effect of diffusion coefficients $d_1$ and $d_2$ on criteria for spreading and vanishing. Assume $c=G'(0)$ and $J_{12}=J_{21}$. Then thanks to Propositions \ref{p2.1} and \ref{p2.3}, the variational characteristic holds and the monotonicity of $\lambda_p$ on diffusion coefficients $d_1$ and $d_2$ is valid. For clarity, we rewrite $\lambda_p(l_1,l_2)$ as $\lambda_p(l_1,l_2,d_1,d_2)$. By Proposition \ref{p2.3}, for all $(l_1,l_2)$, $\lambda_p(l_1,l_2,d_1,d_2)\to-\yy$ as $(d_1,d_2)\to(\yy,\yy)$, while $\lambda_p(l_1,l_2,d_1,d_2)\to\lambda_p(l_1,l_2,0,0)$ as $(d_1,d_2)\to(0,0)$. Moreover, in view of Proposition \ref{p2.2}, we see $\lambda_p(l_1,l_2,0,0)\to\max\{-a,-b\}$ as $l_2-l_1\to0$ and $\lambda_p(l_1,l_2,0,0)\to\lambda_A>0$ ($\mathcal{R}_0>1$) as $l_2-l_1\to\yy$. Hence there exists a unique $\tilde{L}^*$ depending only on $(a,b,c,J_{12})$ such that $\lambda_p(l_1,l_2,0,0)=0$ if $l_2-l_1=\tilde{L}^*$ and $\lambda_p(l_1,l_2,0,0)(l_2-l_1-\tilde{L}^*)>0$ if $l_2-l_1\neq \tilde{L}^*$.

As is seen from Lemma \ref{l3.4}, the critical value $L^*$ depends on $d_1$ and $d_2$, while $\tilde{L}^*$ only relys on $(a,b,c,J_{12})$. Moreover, since $\lambda_p(l_1,l_2,d_1,d_2)<\lambda_p(l_1,l_2,0,0)$ , by Proposition \ref{p2.2} we easily derive $L^*>\tilde{L}^*$ for all $d_i>0$ with $i=1,2$. Then we have the following result.

\begin{lemma}\label{l3.10}Assume that $c=G'(0)$, $J_{12}=J_{21}$ and $d_2=f(d_1)$ with $f$ defined as in Lemma \ref{l3.9}. Then if $2h_0>\tilde{L}^*$, then we can find a unique $d^*_1$ such that spreading happens if $d_1\ge d^*$, while if $d_1>d^*_1$, whether spreading or vanishing occurs depends on $\mu_i$ for $i=1,2$ as in Lemmas \ref{l3.5} and \ref{l3.8}. If $2h_0\le \tilde{L}^*$, then for any $d_1>0$, spreading and vanishing both may happen, depending on $\mu_i$ for $i=1,2$ as in Lemmas \ref{l3.5} and \ref{l3.8}.
\end{lemma}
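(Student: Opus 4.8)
The plan is to analyze how the critical length $L^*$ from Lemma \ref{l3.4} depends on $d_1$ once we impose $d_2=f(d_1)$, and then read off all three conclusions from the monotonicity, continuity and limiting behavior of this dependence, combined with Lemmas \ref{l3.4}, \ref{l3.5} and \ref{l3.8}. Write $L^*=L^*(d_1)$ for this critical length, i.e.\ the unique value of $l_2-l_1$ for which $\lambda_p(l_1,l_2,d_1,f(d_1))=0$; it is well defined for every $d_1>0$ because, under $\mathcal{R}_0>1$, $\lambda_A=\tfrac12\big(-a-b+\sqrt{(a-b)^2+4cG'(0)}\big)>0$ is independent of the diffusion rates, while by Proposition \ref{p2.2} the map $l_2-l_1\mapsto\lambda_p(l_1,l_2,d_1,f(d_1))$ is continuous, strictly increasing, and runs from $\max\{-a,-b\}<0$ up to $\lambda_A>0$.

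First I would establish the preparatory facts about $d_1\mapsto L^*(d_1)$ on $(0,\infty)$: \emph{(a) strict monotonicity}, \emph{(b) continuity}, and \emph{(c) the limits $L^*(d_1)\to\tilde L^*$ as $d_1\to0^+$ and $L^*(d_1)\to\infty$ as $d_1\to\infty$}. For (a): by Proposition \ref{p2.3}(2) $\lambda_p$ is strictly decreasing in each of $d_1,d_2$, $f$ is strictly increasing, and $\lambda_p$ is strictly increasing in $l_2-l_1$; hence increasing $d_1$ strictly decreases $\lambda_p(l_1,l_2,d_1,f(d_1))$ for each fixed length, so a strictly longer interval is needed to return it to $0$, i.e.\ $L^*$ is strictly increasing. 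For (b): fix $d_1$ and $\ep>0$; since $\lambda_p(l_1,l_2,d_1,f(d_1))$ is negative at length $L^*(d_1)-\ep$ and positive at length $L^*(d_1)+\ep$, the continuity of $\lambda_p$ in $(d_1,d_2)$ (Proposition \ref{p2.3}(1)) together with the continuity of $f$ preserves these strict signs for $d_1'$ near $d_1$, which forces $|L^*(d_1')-L^*(d_1)|\le\ep$. For (c): as $d_1\to0^+$ we have $f(d_1)\to0$, so $\lambda_p(l_1,l_2,d_1,f(d_1))\to\lambda_p(l_1,l_2,0,0)$; testing against any length $\ell>\tilde L^*$ (where $\lambda_p(l_1,l_2,0,0)>0$) and any $\ell<\tilde L^*$ (where it is negative) shows $L^*(d_1)\to\tilde L^*$; as $d_1\to\infty$, Proposition \ref{p2.3}(5) gives $\lambda_p(l_1,l_2,d_1,f(d_1))\to-\infty$ for every fixed length, so $L^*(d_1)$ eventually exceeds any prescribed value.

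With these in hand the three cases are immediate. If $2h_0>\tilde L^*$, then since $L^*$ is continuous, strictly increasing, and has infimum $\tilde L^*$ (using (c) and the lower bound $L^*(d_1)>\tilde L^*$ recorded just before the lemma) and supremum $\infty$, the intermediate value theorem and strict monotonicity produce a unique $d_1^*>0$ with $L^*(d_1^*)=2h_0$; for $d_1\le d_1^*$ we get $L^*(d_1)\le2h_0$, so $2h_0\ge L^*(d_1)$ and Lemma \ref{l3.4} gives spreading, whereas for $d_1>d_1^*$ we have $2h_0<L^*(d_1)$, so Lemma \ref{l3.5} (vanishing for $\mu_1+\mu_2$ small) and Lemma \ref{l3.8} (spreading for $\mu_1$, equivalently $\mu_2$, large) show the outcome genuinely depends on the $\mu_i$. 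If instead $2h_0\le\tilde L^*$, then $L^*(d_1)>\tilde L^*\ge2h_0$ for every $d_1>0$, so $2h_0<L^*(d_1)$ always holds and Lemmas \ref{l3.5} and \ref{l3.8} again show both spreading and vanishing may occur depending on the $\mu_i$.

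I expect the only real work to lie in step (c) — in particular the $d_1\to\infty$ limit, which hinges on the uniform-in-length blow-up $\lambda_p\to-\infty$ supplied by Proposition \ref{p2.3}(5) — and in steps (a)–(b), where one must combine the strict monotonicity of $\lambda_p$ in the diffusion rates (which relies on $c=G'(0)$ and $J_{12}=J_{21}$ through the variational characterization) with its strict monotonicity in the interval length. Everything downstream is a bookkeeping application of the already-proven Lemmas \ref{l3.4}, \ref{l3.5} and \ref{l3.8}.
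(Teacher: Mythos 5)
Your argument is correct, but it takes a mildly different route from the paper's. The paper works directly with the function $d_1\mapsto\lambda_p(-h_0,h_0,d_1,f(d_1))$: it is strictly decreasing and continuous (Proposition~\ref{p2.3}(1),(2)), tends to $\lambda_p(-h_0,h_0,0,0)$ as $d_1\to0$ (positive precisely when $2h_0>\tilde L^*$), and tends to $-\infty$ as $d_1\to\infty$ (Proposition~\ref{p2.3}(5)); the unique zero-crossing is $d_1^*$ and the rest follows from Lemmas~\ref{l3.1}, \ref{l3.5}, \ref{l3.8}. You instead invert the picture and study the critical length $L^*(d_1)$ as a function of $d_1$, proving it is continuous, strictly increasing, with $L^*(d_1)\to\tilde L^*$ as $d_1\to0^+$ and $L^*(d_1)\to\infty$ as $d_1\to\infty$, and then compare $L^*(d_1)$ with $2h_0$. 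The two formulations rest on exactly the same facts (monotonicity and continuity of $\lambda_p$ in $d_1,d_2$ via the variational characterization when $c=G'(0)$, $J_{12}=J_{21}$; monotonicity in the interval length; the $-\infty$ limit), so the mathematics is the same; what your version buys is that it explicitly establishes the properties of $L^*(d_1)$ that the paper records only afterwards as Remark~\ref{r3.2}, at the cost of the extra ``inversion'' step (your items (a)--(c)). Your reliance on Lemma~\ref{l3.4} to conclude spreading when $2h_0\ge L^*(d_1)$ (including the boundary case $d_1=d_1^*$) is sound and matches what the paper's citation of Lemma~\ref{l3.1} is implicitly doing. One small point worth flagging: as written, the lemma statement says ``spreading happens if $d_1\ge d^*$,'' which is a typographical slip for $d_1\le d_1^*$; your proof (like the paper's) yields the corrected inequality.
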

\begin{proof}For convenience, we denote $\lambda_p(-h_0,h_0,d_1,f(d_1))$ by $\lambda_p(h_0,d_1)$.
Owing to our assumptions and Proposition \ref{l2.3}, we know that $\lambda_p(h_0,d_1)$ is strictly decreasing and continuous in $d_1\ge0$. Thanks to $2h_0>\tilde{L}^*$, we have $\lambda_p(h_0,d_1)\to\lambda_p(h_0,0)>0$ as $d_1\to0$. Notice that $\lambda_p(h_0,d_1)\to-\yy$ as $d_1\to\yy$. Thus there exists a unique $d^*_1$ such that $\lambda_p(h_0,d_1)\ge0$ if $d_1\le d^*_1$ which, combined with Lemma \ref{l3.1}, yields that spreading occurs. If $d_1> d^*_1$, then $\lambda_p(h_0,d_1)<0$. We can argue as in the proof of Lemmas \ref{l3.5} and \ref{l3.8} to derive the desired result. If $2h_0\le\tilde{L}^*$, then $\lambda_p(h_0,d_1)<0$ for all $d_1>0$. So similarly, we can obtain the result as wanted. The proof is complete.
\end{proof}

\begin{remark}\label{r3.2}Under the assumptions of Lemma \ref{l3.10}, we rewrite $L^*$ as $L^*(d_1)$ to stress the dependence on $d_1$. By Proposition \ref{p2.3}, we easily find that $L^*(d_1)$ is continuous and strictly increasing in $d_1>0$, $L^*(d_1)\to \tilde{L}^*$ as $d_1\to0$, and $L^*(d_1)\to\yy$ as $d_1\to\yy$.
\end{remark}

\begin{remark}\label{r3.3} From \cite[Theorem 1.3]{CDLL}, \cite[Theorem 1.3]{ZZLD}, \cite[Theorem 1.3]{CD}, \cite[Theorem 1.3]{DN4} and \cite[Theorem 1.2]{NV}, we can find that if diffusion coefficients are small enough, then spreading will happen no matter what other parameters are. However, for our model, Lemma \ref{l3.10} indicates that if $h_0$ is sufficiently small, then for all diffusion coefficients, both spreading and vanishing may happen. This phenomenon may imply that the nonlocal reaction makes spreading difficult.
\end{remark}

\section{A discussion on the nonlocal reaction term}
In this short section, we discuss the effect of nonlocal reaction term on spreading and vanishing. As is seen from \cite[Theorem 1.3]{CDLL}, \cite[Theorem 1.3]{ZZLD}, \cite[Theorem 1.3]{CD}, \cite[Theorem 1.3]{DN4} and \cite[Theorem 1.2]{NV}, the critical length of habitat, determined by a related principal eigenvalue problem, plays a crucial role in criteria for spreading and vanishing. Now by comparing the critical lengths in models \eqref{1.3}-\eqref{1.7}, we intend to figure out whether the nonlocal reaction term makes the spreading of an epidemic difficult. In other words, for models \eqref{1.3}-\eqref{1.7}, we conjecture that the more nonlocal reaction terms a model has, the harder spreading happens for the model.

For clarity, we list the corresponding principal eigenvalue problems for models \eqref{1.3}-\eqref{1.6}, respectively.
\bes\label{4.1}d\int_{0}^{l}J(x-y)\phi(y)\dy-d\phi-a\phi+\frac{cG'(0)}{b}\phi=\lambda\phi, ~ ~ x\in[0,l].\ees
\bes\label{4.2}
d\int_{0}^{l}J(x-y)\phi(y)\dy-d\phi-a\phi+\frac{cG'(0)}{b}\int_{0}^{l}K(x-y)\phi(y)\dy=\lambda\phi, ~ ~ x\in[0,l].
\ees
\bes\label{4.3}
\left\{\begin{aligned}
&d_1\int_{0}^{l}J_{1}(x-y)\varphi_1(y)\dy-d_1\varphi_1-a\varphi_1+c\varphi_2=\lambda\varphi_1, & &x\in[0,l],\\[1mm]
&d_2\int_{0}^{l}J_{2}(x-y)\varphi_2(y)\dy-d_{2}\varphi_2-b\varphi_2+G'(0)\varphi_1=\lambda\varphi_2, & &x\in[0,l].
 \end{aligned}\right.
\ees
\bes\label{4.4}
\left\{\begin{aligned}
&d_1\int_{0}^{l}J_{1}(x-y)\varphi_1(y)\dy-d_1\varphi_1-a\varphi_1+c\int_{0}^{l}K(x-y)\varphi_2(y)\dy=\lambda\varphi_1, & &x\in[0,l],\\[1mm]
&d_2\int_{0}^{l}J_{2}(x-y)\varphi_2(y)\dy-d_{2}\varphi_2-b\varphi_2+G'(0)\varphi_1=\lambda\varphi_2, & &x\in[0,l].
 \end{aligned}\right.
\ees
It is noted that for convenience, we consider the above principal eigenvalue problems on interval $[0,l]$ since these principal eigenvalues depend only on the length of an interval.
Denote their principal eigenvalues by $\lambda_i(l)$ with $i=1,2,3,4$, respectively. Clearly, for these four principal eigenvalue problems, there exist the unique critical lengths, denoted respectively by $L^*_1, L^*_2, L^*_3,L^*_4$, such that $\lambda_i(L^*_i)=0$. Let $(\nu(l),\omega)$ be the principal eigenpair of
\[\int_{0}^{l}J(x-y)\omega(y)\dy-\omega=\nu\omega, ~ ~ x\in[0,l].\]
It is well known that $\nu(l)$ is strictly increasing in $l>0$, $\lim_{l\to0}\nu(l)=-1$ and $\lim_{l\to\yy}\nu(l)=0$.

In the following, we assume that all kernel functions in \eqref{4.1}-\eqref{4.4} are the same and represented by $J$, i.e. $J_1=J_2=K=J$. Then using the properties of principal eigenvalue and some simple calculations, we have $\lambda_1(l)=d\nu(l)-a+\frac{cG'(0)}{b}$ and $\lambda_2(l)=d\nu(l)-a+\frac{cG'(0)}{b}(\nu(l)+1)$. Since $\nu(l)\in(-1,0)$ for all $l>0$, we derive $\lambda_1(l)>\lambda_2(l)$ which, combined with the monotonicity of $\lambda_i(l)$ on $l$, implies $L^*_1<L^*_2$.

For problems \eqref{4.3} and \eqref{4.4}, owing to our assumption $J_1=J_2=K=J$, we easily see that these two principal eigenvalue problems can be transformed to the following two algebraic eigenvalue problems respectively.
 \bess
\left\{\begin{aligned}
&d_1\nu(l) p_1-ap_1+cp_2=\lambda p_1,\\[1mm]
&d_2\nu(l) p_2-bp_2+G'(0)p_1=\lambda p_2, ~ ~ {\rm and } ~ ~
 \end{aligned}\right.
 \left\{\begin{aligned}
&d_1\nu(l) p_1-ap_1+c(\nu(l)+1)p_2=\lambda p_1,\\[1mm]
&d_2\nu(l) p_2-bp_2+G'(0)p_1=\lambda p_2.
 \end{aligned}\right.
\eess
Then straightforward computations show that when
\bess\left\{\begin{aligned}
&\lambda_3(l)=\frac{d_1\nu(l)-a+d_2\nu(l)-b+\sqrt{(d_2\nu(l)-b-d_1\nu(l)+a)^2+4cG'(0)}}{2},\\
&\lambda_4(l)=\frac{d_1\nu(l)-a+d_2\nu(l)-b+\sqrt{(d_2\nu(l)-b-d_1\nu(l)+a)^2+4cG'(0)(\nu(l)+1)}}{2},
\end{aligned}\right.
\eess
their corresponding eigenvectors $(p_1,p_2)^T$ are positive.
Recall $\nu(l)\in(0,1)$. So $\lambda_3(l)>\lambda_4(l)$ which, together with the monotonicity again, directly yields $L^*_3<L^*_4$.

Moreover, if we also let $J_{ij}=J$ for $i,j=1,2$ in \eqref{2.9}, then \eqref{2.9} can reduce to
 \bess
 \left\{\begin{aligned}
&d_1\nu(l) p_1-ap_1+c(\nu(l)+1)p_2=\lambda p_1,\\[1mm]
&d_2\nu(l) p_2-bp_2+G'(0)(\nu(l)+1)p_1=\lambda p_2.
 \end{aligned}\right.
\eess
Analogously, it is easy to see that the principal eigenvalue $\lambda_p(l)$ of \eqref{2.9} takes form of
\bess
\lambda_p(l)=\frac{d_1\nu(l)-a+d_2\nu(l)-b+\sqrt{(d_2\nu(l)-b-d_1\nu(l)+a)^2+4cG'(0)(\nu(l)+1)^2}}{2}.
\eess
Clearly, $\lambda_4(l)>\lambda_p(l)$. Thus $L^*_4<L^*$ where $L^*$ is defined as in Lemma \ref{l3.4}, i.e. $\lambda_p(L^*)=0$.

All in all, if we assume that all the kernel functions appearing in principal eigenvalue problems \eqref{4.1}-\eqref{4.4} and \eqref{2.9} are the same and denoted by $J$, then we have $\lambda_1(l)>\lambda_2(l)$ and $\lambda_3(l)>\lambda_4(l)>\lambda_p(l)$, which together with the monotonicity of principal eigenvalue on $l$, leads to $L^*_1<L^*_2$ and $L^*_3<L^*_4<L^*$. This indicates that the more nonlocal reaction terms a model has, the larger its critical length for the initial habitat is, making the spreading of an epidemic more difficult to happen.

However, we point out that the extra assumption that all kernel functions are the same is a technical condition, by which we can compare the above principal eigenvalues.

\end{document}